\tikzstyle{vertex}=[circle,draw=blue!50,fill=blue!20,thick]
\tikzstyle{diedge}=[->,shorten <=1pt,>=angle 90,semithick]
\tikzstyle{forced}=[->,shorten <=1pt,>=angle 90,semithick,dashed]
\tikzstyle{bigbad}=[line width=3pt]
\tikzstyle{every token}=[draw=blue!50,fill=blue!20,thick]
\def\relstr#1{\mathbb{#1}}
\def\alg#1{\mathbf{#1}}
\let\phi\varphi
\def\zet{{\mathbb Z}}
\newcommand\tupl[1]{\overline{#1}}
\let\tuple\tupl
\DeclareMathOperator\absorbs{\trianglelefteq}
\DeclareMathOperator\Jabsorbs{\trianglelefteq_J}
\let\subset\subseteq
\def\compNP{{\textsf{NP}}}
\def\compP{{\textsf{P}}}
\def\algA{{\mathbf{A}}}
\newcommand\True{\mathrm{True}}
\newcommand\AND{\mathrel{\wedge}}
\def\AND{\mathop{\wedge}}
\def\en{{\mathbb N}}
\let\epsilon\varepsilon
\theoremstyle{plain}
\newtheorem{theorem}{Theorem}
\newtheorem{lemma}[theorem]{Lemma}
\newtheorem{corollary}[theorem]{Corollary}
\newtheorem{proposition}[theorem]{Proposition}
\theoremstyle{definition}
\newtheorem{definition}[theorem]{Definition}
\newtheorem{problem}[theorem]{Problem}
\newtheorem{algorithm}[theorem]{Algorithm}
\begin{document}

\title{Deciding absorption}
\author{Libor Barto}
\address{Department of Algebra, Charles University\\
Sokolovsk\'a 83, 186 00 Praha 8, Czech Republic}
\email{libor.barto@gmail.com}
\urladdr{http://www.karlin.mff.cuni.cz/~barto/}
\author{Alexandr Kazda}
\address{Institute of Science and Technology Austria\\
Am Campus 1, 3400, Klosterneuburg, Austria}
\email{alex.kazda@gmail.com}
\urladdr{http://pub.ist.ac.at/~akazda}
\maketitle

\begin{abstract}
We characterize absorption in finite idempotent algebras by means of J\'onsson absorption and cube term blockers. As an application we show that it is decidable whether a given subset is an absorbing subuniverse of an algebra given by the tables of its basic operations. 
\end{abstract}

\keywords{Keywords: absorbing subalgebra, J\'onsson absorbing subalgebra, near unanimity term, J\'onsson terms, cube term, cube term blocker, congruence distributivity, few subpowers}

Mathematics Subject Classification 2010: 08A70,08B10
\section{Introduction}

A subuniverse $B$ (or a subalgebra $\alg{B}$) of an algebra $\algA$ is \emph{absorbing} if $\algA$ has an idempotent term $t$ such that $t(a_1, \dots, a_n) \in B$ whenever all but at most one of the arguments $a_1, \dots, a_n$ are in $B$ and the exceptional argument is from $A$. 
Absorption played a key role in the proofs of several results concerning
Maltsev conditions and the complexity of the constraint satisfaction problem
(see eg.~\cite{barto-kozik-cyclic-terms-and-csp},~\cite{barto-kozik-stanovsky-absorption-solvability},~\cite{barto-kozik-bw-2014},
and~\cite{barto-kozik-approximation}) and it is directly used in the simplified algorithm for conservative constraint satisfaction problems from~\cite{libor-conservative}.
The applicability of this concept has two sources.
On one hand, some useful properties of algebras and their subpowers are inherited by their absorbing subalgebras (see eg. Lemma~\ref{lemFence}) and, on the other hand, a finite algebra has a proper absorbing subalgebra under mild assumptions (see the Absorption Theorem from~\cite{barto-kozik-cyclic-terms-and-csp}).  

However, from the definition of absorption, it is not clear how to decide,
given finite $\algA$ and its subuniverse $B$, whether $B$ absorbs $\algA$. This problem is closely connected to the problem of deciding whether a given finite algebra has a near unanimity (NU) term of some arity. Indeed, a finite 
algebra $\algA$ has an NU term if and only if every singleton
in $A$ is an absorbing subuniverse of the full idempotent reduct of $\algA$. 

Mikl\'os Mar\'oti has shown~\cite{maroti-nu-is-decidable} that the near unanimity problem is decidable, although his algorithm has enormous time complexity. 
 For idempotent algebras, we provide the following generalization of his result.
\begin{theorem} \label{thm-deciding-absorption}
Deciding, given a finite idempotent $\algA$ and its subuniverse $B$, whether $B\absorbs \algA$ is co-\compNP-complete, and  is fixed parameter tractable when parametrized by the product of the arities of the basic operations in $\algA$.
\end{theorem}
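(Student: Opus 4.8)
We will derive the theorem from the characterization of absorption proved above, which for a finite idempotent $\algA$ with subuniverse $B$ reads: $B\absorbs\algA$ if and only if $B\Jabsorbs\algA$ and $\algA$ has no cube term blocker $(C,D)$ with $C\cap B=\emptyset$ and $D\cap B\neq\emptyset$ --- let us call such a pair a cube term blocker \emph{avoiding} $B$. Only the easy direction of this is really needed in what follows: if $(C,D)$ avoids $B$ and $t$ is an idempotent term, then (being a cube term blocker) $(C,D)$ satisfies $t(D^{i-1}\times C\times D^{n-i})\subseteq C$ for some coordinate $i$, and placing a fixed element of $C$ in coordinate $i$ and a fixed element of $B\cap D$ in all remaining coordinates produces an input with $n-1$ entries in $B$ whose $t$-image lies in $C$, hence outside $B$; so $t$ cannot witness $B\absorbs\algA$. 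Thus the problem splits into deciding $B\Jabsorbs\algA$ and deciding the existence of a cube term blocker avoiding $B$; the former is polynomial, and all the complexity lives in the latter.

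\emph{Membership in co-\compNP.} We give an \compNP\ procedure for the complement. If $B=A$ then $B\absorbs\algA$ trivially, so assume $B\subsetneq A$. First decide $B\Jabsorbs\algA$; Jónsson absorption is witnessed by ternary terms and amounts to the absence of a certain obstruction obtainable by a closure computation in $\algA^2$, so it is decidable in polynomial time, and if it fails we accept. Otherwise guess subsets $C,D\subseteq A$ and verify in polynomial time that $D$ is a subuniverse, $\emptyset\neq C\subsetneq D$, $C\cap B=\emptyset$, $D\cap B\neq\emptyset$, and that $(C,D)$ is a cube term blocker. The only nonroutine point is the last: although being a cube term blocker quantifies over all term operations, it is equivalent to the condition that every \emph{basic} operation $f$ is \emph{blocked by $(C,D)$}, i.e.\ $f(D^{j-1}\times C\times D^{\arity(f)-j})\subseteq C$ for some coordinate $j$. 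Indeed projections are blocked, and if $f$ is blocked at coordinate $\kappa$ and $g_\kappa$ is blocked at coordinate $\mu$ then, since $D$ is a subuniverse, $f(g_1,\dots,g_k)$ is blocked at $\mu$; so blockedness propagates from the basic operations to all term operations. (Blockedness of every basic operation also forces $C$ to be a subuniverse.) Checking blockedness costs $O\bigl(\sum_f|D|^{\arity(f)}\bigr)$ time, at most the size of the operation tables, so the complement lies in \compNP.

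\emph{Hardness.} We reduce $3$-satisfiability to the complement ``$B\not\absorbs\algA$''. From a formula $\varphi$ we build a finite idempotent algebra $\algA_\varphi$ with a subuniverse $B_\varphi$ such that $B_\varphi\Jabsorbs\algA_\varphi$ always holds --- the construction is arranged so that Jónsson absorption is unconditional --- while the cube term blockers of $\algA_\varphi$ avoiding $B_\varphi$ correspond bijectively to the satisfying assignments of $\varphi$. The mechanism: a cube term blocker selects one blocked coordinate per basic operation, so we make the basic operations correspond to the variables of $\varphi$, with the blocked coordinate encoding a truth value, and we use auxiliary elements outside $B_\varphi$ together with the demand that the blocked set $C$ be a subuniverse disjoint from $B_\varphi$ to encode the clauses (a clause is violated exactly when the induced closure meets $B_\varphi$). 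By the characterization, $B_\varphi\not\absorbs\algA_\varphi$ iff $\varphi$ is satisfiable. Designing this gadget so that no spurious cube term blockers arise and so that Jónsson absorption is genuinely automatic is the step I expect to be the main obstacle.

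\emph{Fixed-parameter tractability.} Let $K=\prod_f\arity(f)$. Decide $B\Jabsorbs\algA$ in polynomial time as above. A cube term blocker fixes one blocked coordinate per basic operation, so there are exactly $K$ possible ``patterns'' $\bar\jmath$. For a fixed $\bar\jmath$, I claim that if some cube term blocker avoiding $B$ realizes $\bar\jmath$, then so does one of a canonical form determined by two elements: given such a $(C,D)$, pick $c^*\in C$ and $d^*\in D\cap B$, put $D_0=\langle c^*,d^*\rangle\subseteq D$, and let $C_0$ be the closure of $\{c^*\}$ in $\algA|_{D_0}$ under the unary maps $x\mapsto f(b_1,\dots,b_{j-1},x,b_j,\dots)$ obtained by placing $x$ in coordinate $j=\bar\jmath(f)$ of each basic operation $f$ with the other $b$'s ranging over $D_0$; then $C_0\subseteq C\cap D_0$, so $(C_0,D_0)$ is again a cube term blocker avoiding $B$ realizing $\bar\jmath$. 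Hence it suffices, for each of the $K$ patterns and each of the at most $|A|\cdot|B|$ pairs $(c^*,d^*)$ with $d^*\in B$, to compute this one candidate in polynomial time and test with the procedure from the membership argument whether it is a cube term blocker avoiding $B$. If any test succeeds, $B\not\absorbs\algA$; otherwise $B\Jabsorbs\algA$ and the absence of a cube term blocker avoiding $B$ give $B\absorbs\algA$. The running time is $O(K\cdot|A|\cdot|B|)\cdot\mathrm{poly}(\|\algA\|)$, which is the claimed parameterized bound.
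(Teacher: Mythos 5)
Your membership and fixed-parameter-tractability arguments are correct and match the paper's approach closely: you reduce to the $B$-blocker existence problem via the characterization Theorem~\ref{MainThm}, you observe (as in Lemma~\ref{lemBblockers}) that a pair $(C,D)$ is a $B$-blocker exactly when every \emph{basic} operation is blocked at some coordinate with $D$ a subuniverse, with the same propagation argument (projections are blocked; if $f$ is blocked at $\kappa$ and $g_\kappa$ at $\mu$ then $f(g_1,\dots,g_k)$ is blocked at $\mu$), and your canonical $(C_0,D_0)$ for the FPT part is precisely what Algorithm~\ref{algBblockers} computes --- $D_0=\langle c^*,d^*\rangle$ and $C_0$ the reachable set from $c^*$ in the digraph of unary polynomial moves fixed by the pattern $\bar\jmath$. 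That direction of your writeup is sound.

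The hardness direction, however, has a genuine gap, and you flag it yourself. You propose to encode the truth value of variable $w_i$ in the \emph{choice of blocked coordinate} of a basic operation $f_i$, then express that ``designing this gadget so that no spurious cube term blockers arise and so that Jónsson absorption is genuinely automatic'' is the main obstacle. That is indeed where the entire difficulty of co-\compNP-hardness lives, and a sketch that names the obstacle without surmounting it does not constitute a proof. For the record, the paper's reduction (Theorem~\ref{thmReduction}) works on a different mechanism: the universe is $\{0,1,w_1,\dots,w_n\}$, the truth value of $w_i$ is encoded by \emph{membership} of $w_i$ in $C$ (not by a blocked coordinate), binary operations $s_i$ force $D=A$, ternary operations $t_j$ encode the clauses so that a falsified clause makes some forced element of $C$ collide with $\{0\}$, and two extra ternary basic operations $d_1,d_2$ form a directed Jónsson chain, which makes the algebra congruence distributive (hence $\{0\}\Jabsorbs\algA$) unconditionally. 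Until you exhibit a concrete gadget and verify both that spurious blockers cannot arise and that Jónsson absorption holds regardless of $\varphi$, the co-\compNP-hardness claim is unproved.
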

Note that this theorem, as stated, does not imply Mar\'oti's NU result for non-idempotent algebras since it is not clear at first how to decide whether $B$ is an absorbing subuniverse of the full idempotent reduct of a given $\algA$. 
However, using the techniques from~\cite{zhuk-kazda} it is straightforward to adapt our algorithm to this more general problem in exchange for a worse time complexity.

To prove Theorem~\ref{thm-deciding-absorption} we generalize, for finite idempotent algebras, a relatively recent discovery~\cite{BIMMVW,MM08b} that having an NU term is equivalent to the conjunction of two weaker Maltsev conditions -- having J\'onsson terms and having a cube term. 

Having J\'onsson terms is a classical condition which characterizes algebras in congruence distributive varieties~\cite{jonsson}. In a similar way in which absorption generalizes NU terms, J\'onsson terms can be generalized to J\'onsson absorption (see Definition~\ref{def-jonsson-abs}). In particular, ``$B$ J\'onsson absorbs $\algA$'' is a weakening of ``$B$ absorbs $\algA$''. 

Having a cube term is a substantially more recent condition which characterizes, for finite algebras, the property of having few subpowers and number of other important properties~\cite{BIMMVW,KS12}. A useful equivalent condition to ``$\algA$ has a cube term'' (assuming $\algA$ is finite and idempotent) is ``$\algA$ has no cube term blockers'', where a cube term blocker is a pair of subuniverses with certain properties, see~\cite{markovic-maroti-mckenzie-blockers}. In particular, since cube terms are weaker than NU terms, no algebra with an NU term can have a cube term blocker.
This fact can be also generalized to absorption: some of the cube term blockers, which we call $B$-blockers, prevent $B$ from being an absorbing subuniverse of $\algA$ (see Definition~\ref{def-blocker}). In other words, ``$\algA$ has no $B$-blockers'' is a weakening of ``$B$ absorbs $\algA$''.

The main result of this paper shows that absorption is equivalent to the conjunction of the two of its weaker forms described above:

\begin{theorem} \label{MainThm}
Let $\algA$ be a finite idempotent algebra and $B$ a subuniverse of $\algA$. Then the following are equivalent.
\begin{itemize}
\item[(i)] $B$ absorbs $\algA$.
\item[(ii)] $B$ J\'onsson absorbs $\algA$ and $\algA$ has no $B$-blocker.
\end{itemize}
\end{theorem}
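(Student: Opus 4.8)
We establish the two implications in turn; (i)$\Rightarrow$(ii) is routine and (ii)$\Rightarrow$(i) carries the content. For (i)$\Rightarrow$(ii): both halves of (ii) are, by construction, weakenings of (i). If an idempotent term $t$ witnesses $B\absorbs\algA$, then feeding copies of a single variable into $t$ -- in the same pattern one uses to derive J\'onsson terms from a near-unanimity term -- yields terms witnessing that $B$ J\'onsson-absorbs $\algA$ in the sense of Definition~\ref{def-jonsson-abs}; the ``at most one exceptional argument'' bookkeeping survives because exactly one variable (the one in the role of the exceptional coordinate) is left free to range over $A$ in each such substitution. And a $B$-blocker $(C,D)$ would contradict $B\absorbs\algA$: applying its defining property to $t$ produces a coordinate in which setting an element of $D\setminus C\subseteq A$ there, and elements of $C\subseteq B$ elsewhere, forces the output into $C$ and hence out of $B$ -- this is precisely how Definition~\ref{def-blocker} is arranged. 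So (ii) follows.

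For (ii)$\Rightarrow$(i), suppose $B$ J\'onsson-absorbs $\algA$ and $\algA$ has no $B$-blocker. Write $T_n\subseteq A^n$ for the set of $n$-tuples with at most one entry outside $B$. Then $B\absorbs\algA$ if and only if for some $n$ the subalgebra $\alg{S}_n\leq\algA^{T_n}$ generated by the coordinate projections $\pi_1,\dots,\pi_n\colon T_n\to A$ meets $B^{T_n}$: an element of $\alg{S}_n\cap B^{T_n}$ is exactly the tuple $t(\pi_1,\dots,\pi_n)$ for an $n$-ary term $t$ with $t(T_n)\subseteq B$, and such a $t$ is automatically idempotent as $\algA$ is. I plan to build such a $t$ in two stages. \textbf{Stage 1} uses the absence of a $B$-blocker: via a relativized form of the Markovi\'c--Mar\'oti--McKenzie analysis of cube term blockers~\cite{markovic-maroti-mckenzie-blockers}, one obtains a term $q$ that behaves like a cube (equivalently, edge) term as long as the entries outside $B$ are confined to boundedly many prescribed coordinates -- $q$ absorbs such configurations into $B$ except on a small set of ``special'' coordinates, the analogue of the first few coordinates of an edge term. \textbf{Stage 2} uses the J\'onsson-absorption terms $d_0,\dots,d_k$ to handle those special coordinates: running the usual J\'onsson interpolation (composing $q$ with the $d_i$ and using $d_i(x,y,x)=x$, the J\'onsson identities, and the fact that $d_i$ sends any triple whose exceptional entry sits in the middle into $B$) bridges the special coordinates and pushes the whole configuration into $B$. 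Composing the two stages, and verifying idempotence together with the one-exceptional-argument condition, yields the required $t$.

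I expect the crux to be the interface between the stages: the J\'onsson interpolation of Stage~2 has to be performed simultaneously for every pair of special coordinates left behind by $q$, all within one term of finite arity, while never violating the invariant that at most one argument lies outside $B$ -- this is exactly where Lemma~\ref{lemFence}, which controls how subalgebras sit relative to $B$, should do the work. A second, softer difficulty lies in Stage~1: since ``no $B$-blocker'' is strictly weaker than ``has a cube term'', one cannot simply invoke the cube term characterization and must instead classify the cube term blockers that are not $B$-blockers and argue, using idempotence of $\algA$, that such a blocker does not obstruct the construction.
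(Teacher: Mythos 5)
Your treatment of (i)$\Rightarrow$(ii) matches the paper (Propositions~\ref{abs-jabs} and~\ref{blocker-noabs}), and your reformulation of absorption via the subalgebra $\alg{S}_n\le\algA^{T_n}$ meeting $B^{T_n}$ is essentially Proposition~\ref{propEssentialRel}. But the (ii)$\Rightarrow$(i) argument is not a proof: it is a two-stage plan in which both stages contain genuine, unresolved gaps, and you say as much yourself.

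The crucial problem is Stage~1. ``No $B$-blocker'' is a purely relational hypothesis; it does not hand you a term $q$ of any kind. The Markovi\'c--Mar\'oti--McKenzie theorem you allude to does convert ``no cube term blocker'' into a cube term, but its proof is itself a nontrivial compactness/relational argument, and --- as you correctly observe --- your hypothesis is strictly weaker: the algebra may well have cube term blockers, just none that are $B$-blockers. So there may be no cube term at all, and you would need a new argument to produce your $q$ with the ``confined exceptional coordinates'' property. You flag this (``classify the cube term blockers that are not $B$-blockers and argue \dots that such a blocker does not obstruct the construction''), but that classification is the heart of the matter and is not done. Stage~2 has a similar status: ``running J\'onsson interpolation simultaneously for every pair of special coordinates'' within a single finite-arity term, while keeping the one-exceptional-argument invariant, is exactly where the difficulty lies, and Lemma~\ref{lemFence} is a statement about fences in binary relations, not a device for patching two terms together; some reduction to the binary level is needed before it can bite.

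The paper's proof of Theorem~\ref{thmAbsChar} takes the opposite route: instead of building an absorption term, it takes a minimal counterexample, invokes Proposition~\ref{propEssentialRel} to get $B$-essential subpowers of every arity, symmetrizes them with a Ramsey argument (Lemma~\ref{lemRamsey}, Lemma~\ref{lemSymmetric}), passes to an infinitary symmetric relation $R_\infty$, and then --- using the J\'onsson absorption only through the fence/path connectivity lemmas and pp-definability (Lemma~\ref{lemAbsPP}) --- extracts from a maximal neighborhood set $C$ a $B$-blocker $(C,A)$, contradicting the hypothesis. That contradiction-plus-relational strategy is precisely what lets the J\'onsson and blocker hypotheses be used separately, without ever having to splice a ``cube-like'' term and a J\'onsson chain into one term. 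If you want to pursue the direct-construction route, you would essentially have to reprove this relational machinery first; as written, the proposal identifies the right two ingredients but does not close either gap.
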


\section{Preliminaries}

We use rather standard universal algebraic terminology~\cite{burris,uabook}: We
denote algebras by capital letters $\algA$, $\alg{B}$, $\alg{R}$, \dots in
boldface. The same letters  $A$, $B$, $R$, \dots in the plain font are used to
denote universes of algebras. We will call a subuniverse or a subalgebra of a power a \emph{subpower}. We mostly consider finite algebras that are idempotent, that is, each  basic operation $f$ satisfies the identity $f(x, x, \dots, x) \approx x$. 


%

\subsection{Absorption}

\begin{definition}
Let $\algA$ be an algebra and $B$ a subuniverse of $\algA$.
We say that $B$ \emph{absorbs} $\algA$ if there is an idempotent term $t$ of $\algA$ such that whenever
$b_1,\dots,b_n\in B$ and $a\in A$, then
\begin{align*}
  t(a,b_2,b_3,\dots,b_{n-1},b_n)&\in B\\
  t(b_1,a,b_3,\dots,b_{n-1},b_n)&\in B\\
  \vdots&\\
  t(b_1,b_2,b_3,\dots,b_{n-1},a)&\in B.
\end{align*}
We call $t$ an \emph{absorption term}, and denote absorption by $B\absorbs \algA$. 
\end{definition}

We also say that $B$ is an absorbing subuniverse of $\algA$, that $t$ witnesses
the absorption $B \absorbs \algA$, etc. We call an absorbing subuniverse of a
power of $\algA$ an \emph{absorbing subpower} of $\algA$.


Two absorptions can be witnessed by a common absorption term~\cite{barto-kozik-cyclic-terms-and-csp}: If $B \absorbs\algA$ by a term $t$ and $C\absorbs \algA$ by
a term $s$, then
\[
  s\star t= s(t(x_{11},\dots,x_{1n}),\dots,t(x_{m1},\dots,x_{mn})).
\]
 is a common absorption term for both $B\absorbs\algA$ and $C\absorbs \algA$.

As the following well known proposition 
shows, absorption can be understood as a generalization of near unanimity.

\begin{proposition} \label{abs-and-nu}
A finite idempotent algebra $\algA$ has an NU term if and
only if every singleton absorbs $\algA$.
\end{proposition}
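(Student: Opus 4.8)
The plan is to prove both implications directly from the definitions of NU term and absorption.

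For the easy direction, suppose $\algA$ has an NU term $t$ of arity $n \geq 3$, meaning $t$ is idempotent and satisfies $t(y,x,\dots,x) \approx t(x,y,x,\dots,x) \approx \dots \approx t(x,\dots,x,y) \approx x$. Fix any element $c \in A$ and let $B = \{c\}$. For any $a \in A$, plugging the single exceptional argument $a$ and all other arguments equal to $c$ into $t$, the NU identities give $t(a,c,\dots,c) = c$, and similarly for every other coordinate of the exceptional argument; idempotence handles the case with no exceptional argument. So $t$ itself is an absorption term witnessing $\{c\} \absorbs \algA$, and since $c$ was arbitrary, every singleton absorbs $\algA$.

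For the converse, suppose every singleton absorbs $\algA$. By the remark preceding the proposition, finitely many absorptions can be witnessed by a single common absorption term: iterating the $s \star t$ construction over all singletons $\{c\}$ for $c \in A$ (a finite set since $\algA$ is finite) yields one idempotent term $t$ of some arity $n$ that simultaneously witnesses $\{c\} \absorbs \algA$ for every $c \in A$. Now I claim $t$ is an NU term. Indeed, fix $i \in \{1,\dots,n\}$ and consider $t(x,\dots,x,y,x,\dots,x)$ with $y$ in position $i$. For any specific values $x \mapsto c$, $y \mapsto a$, the tuple has all arguments in $\{c\}$ except position $i$, so by absorption $t(c,\dots,c,a,c,\dots,c) \in \{c\}$, i.e.\ it equals $c$. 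Since this holds for all $c,a \in A$, the term operation satisfies the identity $t(x,\dots,x,y,x,\dots,x) \approx x$; combined with idempotence this makes $t$ a near unanimity term. (One should note $n \geq 3$ is automatic or can be arranged by composing $t$ with itself, since a unary or binary idempotent term satisfying these identities collapses appropriately; this is a minor point.)

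**The main thing to be careful about** is the converse: one must produce a \emph{single} term that handles all singletons at once, rather than a different absorption term for each $c$ — this is exactly what the common-absorption-term construction $s \star t$ provides, and the fact that $\algA$ is finite is what keeps the iteration finite. The rest is a routine unwinding of the definition of absorption applied to one-element absorbing subuniverses.
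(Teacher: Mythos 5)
Your proof is correct and follows essentially the same approach as the paper: the forward direction reads an NU term directly as an absorption term for each singleton, and the converse iterates the $s \star t$ construction to obtain a single common absorption term for all singletons and then reads off the NU identities. The paper's version is just stated more tersely; your treatment of the arity issue is a reasonable (and, as you note, minor) addition.
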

\begin{proof}
  In one direction, an NU term is an absorption term for any $\{a\}\leq
  \algA$.
  To obtain the other implication, we repeat the $s\star t$ construction until
  we arrive at a term $n$ which is an absorption term for every $\{a\}\absorbs
  \algA$. Examining the absorption equations, we see that 
  $n$ satisfies the NU identities for any choice of $x,y$.
\end{proof}

\subsection{J\'onsson absorption}
A weaker, but still useful~\cite{cd-implies-bw}, kind of absorption is
 Jónsson  absorption. 
 
\begin{definition} \label{def-jonsson-abs}
Let $B\leq\algA$. We say that $B$ \emph{J\'onsson absorbs}
 $\algA$, written $B \Jabsorbs \algA$, if there exist  terms (a \emph{J\'onsson absorption chain}) $d_0,d_1,\dots,d_n$ of $\algA$ such that:
\begin{align*}
  \forall i=0,\dots,n,\,d_i(B,A,B)&\subset B\\
  \forall i=0,\dots,n-1,\,d_i(x,y,y)&=d_{i+1}(x,x,y)\\
  d_0(x,y,z)&=x\\
  d_n(x,y,z)&=z.
\end{align*}
\end{definition}
Note that these terms do not correspond to the standard Jónsson terms, but to
directed J\'onsson terms introduced in~\cite{directed-jonsson}. However, the definition via the original J\'onsson terms gives an equivalent concept of J\'onsson absorption~\cite{directed-jonsson}.

J\'onsson absorption is
weaker than absorption:

\begin{proposition} \label{abs-jabs}
If $B \absorbs \algA$, then $B \Jabsorbs \algA$.
\end{proposition}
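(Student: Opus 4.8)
The plan is to take an absorption term $t$ of arity $n$ witnessing $B \absorbs \algA$ and build from it a directed J\'onsson absorption chain. Since $t$ is already idempotent and satisfies $t(b_1,\dots,b_{i-1},a,b_{i+1},\dots,b_n) \in B$ whenever the $b_j$'s lie in $B$, the natural idea is to define, for $i = 0, 1, \dots, n$, a ternary term $d_i$ that uses $t$ with its first $i$ coordinates ``switched on'' to the variable that should eventually become $z$, and the remaining $n-i$ coordinates still reading the variable that starts as $x$. Concretely one sets
\begin{align*}
  d_i(x,y,z) = t(\underbrace{z,\dots,z}_{i},\underbrace{x,\dots,x}_{n-i}),
\end{align*}
so that $d_0(x,y,z) = t(x,\dots,x) = x$ and $d_n(x,y,z) = t(z,\dots,z) = z$ by idempotence, and the ``$y$'' variable is a dummy. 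The snake identities $d_i(x,y,y) = d_{i+1}(x,x,y)$ then read $t(y,\dots,y,x,\dots,x) = t(y,\dots,y,x,\dots,x)$ with $i$ copies of $y$ on each side, hence hold trivially.

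The only nontrivial point is the absorption condition $d_i(B,A,B) \subseteq B$ for each $i$. Unwinding the definition, $d_i(b,a,b') = t(b',\dots,b',b,\dots,b)$ where $b' \in B$ appears $i$ times and $b \in B$ appears $n-i$ times; but then \emph{every} argument of $t$ is in $B$, so by idempotence (or just by $B$ being a subuniverse) the value lies in $B$. This means the $y$-slot never actually needs to receive the element of $A$, which is suspicious: it suggests I am not using the full strength of absorption and that a cleaner chain exists. A better choice is to route the exceptional $A$-element through the $y$ variable in coordinate $i$ (or $i+1$), e.g. $d_i(x,y,z) = t(z,\dots,z,y,x,\dots,x)$ with $z$ in positions $1,\dots,i-1$, then $y$ in position $i$, then $x$ in positions $i+1,\dots,n$ — adjusting the endpoints so $d_0 = x$ and $d_n = z$ still hold and the snake identities telescope. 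Here $d_i(b,a,b') = t(b',\dots,b',a,b,\dots,b)$ has exactly one argument (the $i$-th) outside $B$, so it lies in $B$ precisely by the absorption property of $t$; and $d_i(b,a,b')$ for the boundary indices reduces to an all-$B$ tuple. Checking $d_i(x,y,y) = d_{i+1}(x,x,y)$ is then a short bookkeeping exercise in which position the $y$ sits.

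The main obstacle is purely combinatorial: arranging the indexing so that simultaneously (a) $d_0$ collapses to the first-variable projection, (b) $d_n$ collapses to the third-variable projection, (c) consecutive terms satisfy the directed-J\'onsson ``snake'' equation, and (d) each $d_i$ has at most one argument outside $B$ when evaluated on $(B,A,B)$. The safest route is to interleave ``shifting'' steps (which move the boundary between the $x$-block and the $z$-block, using $y$ in the single transition coordinate) so that each $d_i$ differs from $d_{i+1}$ in one coordinate only; one may need to introduce the fresh $y$-coordinate and then immediately ``commit'' it, roughly doubling the length of the chain to $2n$ or so. I expect no real difficulty beyond getting these indices right, and I would present the construction explicitly and then verify the four conditions by direct substitution, citing idempotence of $t$ and the absorption inequalities in the definition of $B \absorbs \algA$.
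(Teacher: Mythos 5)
Your second construction --- routing the exceptional argument through the $y$-slot, with $d_i(x,y,z) = t(\underbrace{z,\dots,z}_{i-1},y,\underbrace{x,\dots,x}_{n-i})$ --- is exactly the paper's proof. The chain has length $n+2$ (indices $0$ through $n+1$), not $2n$: $d_0 = x$, then $d_1,\dots,d_n$ slide the $y$ from position $1$ to position $n$, then $d_{n+1} = z$. The snake identity $d_i(x,y,y) = d_{i+1}(x,x,y)$ holds because both sides reduce to $t$ applied to $i$ copies of $y$ followed by $n-i$ copies of $x$; idempotence handles the endpoints; and $d_i(B,A,B) \subseteq B$ for the intermediate terms is precisely the absorption property of $t$.

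Your dismissal of the first chain, though, rests on a miscalculation. You claim $d_i(x,y,z) = t(\underbrace{z,\dots,z}_{i},\underbrace{x,\dots,x}_{n-i})$ satisfies the snake identities trivially ``with $i$ copies of $y$ on each side''; in fact $d_i(x,y,y) = t(\underbrace{y,\dots,y}_{i},\underbrace{x,\dots,x}_{n-i})$ while $d_{i+1}(x,x,y) = t(\underbrace{y,\dots,y}_{i+1},\underbrace{x,\dots,x}_{n-i-1})$, and these are different terms. That chain is outright broken, not merely ``suspicious because absorption goes unused''; had the identities held, a chain that never invokes the absorption property of $t$ would still be a perfectly good witness of J\'onsson absorption (since $B$ is a subuniverse). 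The failure of the snake equations, not the heuristic about unused hypotheses, is what forces the $y$-coordinate into the construction.
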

\begin{proof}
If $B\absorbs \algA$ by a term
$t(x_1,\dots,x_n)$, then the terms
\begin{align*}
  d_0(x,y,z)&=x\\
  d_1(x,y,z)&=t(y,x,x,\dots,x,x)\\
  d_2(x,y,z)&=t(z,y,x,\dots,x,x)\\
  d_3(x,y,z)&=t(z,z,y,\dots,x,x)\\
	    &\vdots\\
  d_{n-1}(x,y,z)&=t(z,z,z,\dots,z,y)\\
  d_n(x,y,z)&=z
\end{align*}
witness that  $B\Jabsorbs \algA$.
\end{proof}

  The other implication does not hold: Let us define $\algA$ as the full idempotent reduct of
  the implication algebra on $\{0,1\}$. Here, $\{0\}\Jabsorbs \algA$ is
  witnessed by the Jónsson absorbing terms
  \begin{align*}
    d_0(x,y,z)&=x,\\
    d_1(x,y,z)&=(y\to(z\to x))\to x,\\
    d_2(x,y,z)&=(x\to(y\to z))\to z,\\
    d_3(x,y,z)&=z.
  \end{align*}
  However, as we will see in Subsection~\ref{secBlockers},
  $\{0\}$ is not an absorbing subuniverse of $\algA$ because this algebra has a
  $\{0\}$-blocker.

  There are numerous parallels between absorption and J\'onsson absorption.
Similarly to absorption, we have the following result.

\begin{lemma} \label{common-jabs}
  Assume that $B\Jabsorbs \algA$ via the sequence of terms $\{d_i\}_{i=0}^k$ and 
  $C\Jabsorbs \algA$ by the sequence of terms $\{e_i\}_{i=0}^l$. Then there
  exists a sequence of terms $\{f_i\}_{i=0}^{kl+k}$ that is J\'onsson absorbing for
  both $B\Jabsorbs \algA$ and $C\Jabsorbs \algA$.
\end{lemma}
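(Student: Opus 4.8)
Deciding absorption — proof plan for Lemma (common Jónsson absorption chain).

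The plan is to imitate the $s\star t$ construction for absorption terms: compose the two chains into a two-dimensional array of ternary terms which the new chain traverses row by row. Concretely, I would put, for $0\le i\le k$ and $0\le j\le l$,
\[
  g_{i,j}(x,y,z)=e_j\bigl(d_i(x,x,z),\,d_i(x,y,z),\,d_i(x,z,z)\bigr),
\]
and then define $f_m$ for $m=0,\dots,kl+k$ by $f_{i(l+1)+j}=g_{i,j}$ for $0\le i\le k-1$, $0\le j\le l$, together with $f_{kl+k}=g_{k,0}$. The point of collapsing the middle variable of $d_i$ into the first, respectively the third, variable is that the resulting terms land inside $C$ whenever their $x$- and $z$-slots do (because $C$ is a subuniverse), which is precisely the shape $e_j$ needs in order to absorb into $C$; meanwhile the middle coordinate still ranges over all of $A$.

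Next I would verify the endpoints and the two absorption conditions, which are short. Since $d_0,e_0$ are first projections and $d_k,e_l$ are third projections, $f_0=g_{0,0}=e_0(x,x,x)=x$ and $f_{kl+k}=g_{k,0}=e_0(z,z,z)=z$. For absorption into $B$: if $x,z\in B$ and $y\in A$, then $d_i(x,y,z)\in d_i(B,A,B)\subseteq B$ and $d_i(x,x,z),d_i(x,z,z)\in B$ because $B$ is a subuniverse, so $g_{i,j}(x,y,z)\in e_j(B,B,B)\subseteq B$. For absorption into $C$: if $x,z\in C$ and $y\in A$, then $d_i(x,x,z),d_i(x,z,z)\in C$ because $C$ is a subuniverse, while $d_i(x,y,z)$ is merely some element of $A$, so $g_{i,j}(x,y,z)\in e_j(C,A,C)\subseteq C$.

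The remaining task, and the only one requiring care, is the chain identity $f_m(x,y,y)=f_{m+1}(x,x,y)$, which I would split into two cases. If $m=i(l+1)+j$ with $j<l$, then $f_m(x,y,y)=e_j\bigl(d_i(x,x,y),d_i(x,y,y),d_i(x,y,y)\bigr)$, which by $e_j(u,v,v)=e_{j+1}(u,u,v)$ equals $e_{j+1}\bigl(d_i(x,x,y),d_i(x,x,y),d_i(x,y,y)\bigr)$, and this is exactly $f_{m+1}(x,x,y)$. If $j=l$ (so $m+1$ begins the next row), then $e_l(u,v,v)=v$ gives $f_m(x,y,y)=d_i(x,y,y)=d_{i+1}(x,x,y)$ by the $d$-chain identity, while $f_{m+1}(x,x,y)=e_0\bigl(d_{i+1}(x,x,y),\dots\bigr)=d_{i+1}(x,x,y)$ since $e_0$ is a projection; the final step into $f_{kl+k}$ is the case $i=k-1$ of this computation.

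The main obstacle is purely combinatorial bookkeeping: one must choose the inner substitution pattern $d_i(x,x,z),\,d_i(x,y,z),\,d_i(x,z,z)$ so that $B$-absorption, $C$-absorption, and both flavours of the chain identity all hold for the single array $g_{i,j}$ — any asymmetric choice tends to break one of the four requirements. Once the array is set up correctly, each verification is a one-line substitution using only that $B,C$ are subuniverses, that $d_0,e_0,d_k,e_l$ are projections, and the defining identities of the two chains.
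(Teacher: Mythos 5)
Your construction is correct, but it is not the one the paper uses, and the paper's is simpler. The paper sets
\[
  f_{a(\ell+1)+b}(x,y,z)=d_a\bigl(x,\,e_b(x,y,z),\,z\bigr),
\]
which nests $e_b$ into the \emph{middle} slot of $d_a$. Then $B$-absorption is immediate from $d_a(B,A,B)\subseteq B$ (the middle argument can be arbitrary); $C$-absorption follows because $e_b(x,y,z)\in C$ when $x,z\in C$, and then $d_a(C,C,C)\subseteq C$; the endpoint and chain identities fall out directly because $d_0,e_0$ are first projections and $d_k,e_\ell$ are third projections. Your version nests the other way, $g_{i,j}(x,y,z)=e_j\bigl(d_i(x,x,z),d_i(x,y,z),d_i(x,z,z)\bigr)$, and as you observe this forces the extra ``collapsing'' trick on the first and third slots of $e_j$ precisely so that the row-internal chain identity $e_j(u,v,v)=e_{j+1}(u,u,v)$ lines up and so that the outer $e_j$ sees its first and third arguments in $C$; the naive $e_j(x,d_i(x,y,z),z)$ would not satisfy the chain identity. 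So you pay with a slightly more elaborate inner pattern but buy nothing the paper's choice doesn't already give; both traverse the same $(k+1)\times(\ell+1)$ grid row by row, and both verifications are one-line substitutions. One small point in your favour: you state the index range precisely ($0\le i\le k-1$, $0\le j\le \ell$, plus the single terminal $(k,0)$), whereas the paper writes $0\le a\le k$, $0\le b\le\ell$, which nominally defines a few unused (and all equal to $z$) terms beyond $f_{k\ell+k}$.
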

\begin{proof}
  It is straightforward to verify that the following chain of terms
  $f_0$, $f_1$, \dots, $f_{k(\ell+1)}$ will witness both $B\Jabsorbs \algA$ and $C\Jabsorbs \algA$:
  For $0\leq a\leq k$, $0\leq b \leq \ell$, we let
\[
  f_{a(\ell+1)+b}(x,y,z)=d_a(x,e_b(x,y,z),z).
\]
\end{proof}

Since an algebra generates a congruence distributive variety if and only if it has J\'onsson terms~\cite{jonsson} if and only if it has
directed J\'onsson terms~\cite{directed-jonsson}, we have the following consequence.
\begin{corollary} \label{cor-Jonsson}
  A finite idempotent algebra $\algA$ lies in a congruence distributive variety (equivalently, it has (directed) J\'onsson terms)  if and only if 
$\{a\}\Jabsorbs \algA$ for every $a\in A$.
\end{corollary}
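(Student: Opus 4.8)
The plan is to read the corollary off the two facts recalled immediately before it: $\algA$ generates a congruence distributive variety iff it has original Jónsson terms iff it has directed Jónsson terms~\cite{jonsson,directed-jonsson}, and the notion of Jónsson absorption from Definition~\ref{def-jonsson-abs} (phrased via directed chains) coincides with the one phrased via original Jónsson chains~\cite{directed-jonsson}. The only real content is the observation that an original Jónsson chain automatically absorbs every singleton, because the Jónsson identities include $t_i(x,y,x)\approx x$.

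For the forward implication I would argue as follows. Suppose $\algA$ lies in a congruence distributive variety and let $t_0,\dots,t_m$ be original Jónsson terms of $\algA$. Since $t_i(x,y,x)\approx x$ holds in $\algA$, for every $a\in A$ and every $i$ we have $t_i(\{a\},A,\{a\})=\{a\}$; together with the remaining Jónsson identities this says precisely that $t_0,\dots,t_m$ form an original Jónsson absorption chain for $\{a\}$. By the equivalence of the two formulations of Jónsson absorption~\cite{directed-jonsson}, we conclude $\{a\}\Jabsorbs\algA$.

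For the converse I would fix an arbitrary $a\in A$ and take a Jónsson absorption chain $d_0,\dots,d_n$ witnessing $\{a\}\Jabsorbs\algA$. By Definition~\ref{def-jonsson-abs} the $d_i$ satisfy $d_0(x,y,z)\approx x$, $d_n(x,y,z)\approx z$ and $d_i(x,y,y)\approx d_{i+1}(x,x,y)$, i.e.\ they are directed Jónsson terms of $\algA$; hence $\algA$ generates a congruence distributive variety. (If one prefers a single chain handling all singletons at once, Lemma~\ref{common-jabs} and the finiteness of $A$ provide one, but this is not needed here.)

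I do not expect a genuine obstacle; the only point to watch is not to conflate directed and original Jónsson terms. The converse works because a directed Jónsson absorption chain is literally a directed Jónsson sequence once the absorption clause is dropped. The forward direction instead goes through original Jónsson terms, since those come for free with the identity $t_i(x,y,x)\approx x$ yielding singleton absorption, after which one invokes the cited equivalence between the two versions of Jónsson absorption to return to the directed setting of Definition~\ref{def-jonsson-abs}.
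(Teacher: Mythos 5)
Your forward direction is fine (if a bit roundabout — directed Jónsson terms already satisfy $d_i(x,y,x)\approx x$, so they themselves form a Jónsson absorption chain for each singleton, and you can skip the detour through the undirected version). The converse, however, has a genuine gap. You assert that a Jónsson absorption chain $d_0,\dots,d_n$ witnessing $\{a\}\Jabsorbs\algA$ for a single fixed $a$ "are directed Jónsson terms of $\algA$." That is false: Definition~\ref{def-jonsson-abs} gives you $d_0=\pi_1$, $d_n=\pi_3$, $d_i(x,y,y)=d_{i+1}(x,x,y)$, and $d_i(\{a\},A,\{a\})\subseteq\{a\}$, i.e.\ only $d_i(a,y,a)=a$. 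But the directed Jónsson definition from \cite{directed-jonsson} additionally requires $d_i(x,y,x)\approx x$ as an \emph{identity}, holding for all $x$. Without that identity the remaining system of equations is trivially satisfiable by projections alone ($d_0=\pi_1$, $d_1=\pi_2$, $d_2=\pi_3$ works in any algebra), so it carries no congruence-distributivity content.

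Your parenthetical remark that Lemma~\ref{common-jabs} "is not needed here" is exactly backwards — it is the whole point of the converse. You must first combine the chains for all $a\in A$ (using Lemma~\ref{common-jabs} repeatedly, together with finiteness of $A$, which is also why the corollary is stated for finite $\algA$) into a single chain that satisfies $d_i(\{a\},A,\{a\})\subseteq\{a\}$ simultaneously for every $a$. Only then does the absorption clause become the identity $d_i(x,y,x)\approx x$, turning the chain into a genuine directed Jónsson chain. That is also why the corollary appears immediately after Lemma~\ref{common-jabs} in the paper.
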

Now we demonstrate how absorption allows us to transfer
connectivity properties to subalgebras. 

A subuniverse $E$ of $\algA^2$ is often regarded as a digraph $(A,E)$ with vertex set $A$ and edge set $E$. 
By a \emph{fence} of length $k$ from $a$ to $a'$ in $E$
we mean a sequence of forward and backward edges in this digraph, that is,
a sequence \[ a=e_1,e_2,e_3,\dots,e_{k},e_{k+1}=a', \]  where $(e_i,e_{i+1})\in
E$ for $i$ odd, and $(e_{i+1},e_i)\in E$ for $i$ even. Note that the length of
a fence is the number of edges, not vertices.

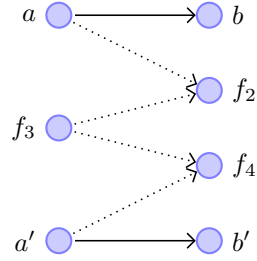
\begin{figure}
  \begin{center}
    \begin{tikzpicture}
    \node[vertex,label=left:$a$]  (a) {};
    \node[vertex, right of=a, node distance=2cm,label=right:$b$]  (b) {};
    \node[vertex, below of=a, node distance=3cm, label=left:$a'$]  (ap) {};
    \node[vertex, right of=ap, node distance=2cm,label=right:$b'$]  (bp) {};
    \node[vertex, below of=b, label=right:$f_2$]  (c1) {};
    \node[vertex, below of=c1,label=right:$f_4$]  (c2) {};
    \node[vertex, below of=a, node distance=1.5cm,label=left:$f_3$]  (c3) {};

    \draw[diedge]
    (a) edge (b)
    (ap) edge (bp)
    (a) edge[dotted] (c1)
    (c3) edge[dotted] (c1)
    (c3) edge[dotted] (c2)
    (ap) edge[dotted] (c2);
  \end{tikzpicture}
  \end{center}
  \caption{The situation in Lemma~\ref{lemFence} for $k=4$. The solid lines correspond to tuples
  from $E$, while the dotted lines denote tuples in $F$.}
  \label{figCDpath}
\end{figure}

\begin{lemma}\label{lemFence}
  Assume that $\alg{E}, \alg{F} \leq \algA^2$, that
  we have $(a,b),(a',b')\in E$ for some $a,a',b,b'\in A$, 
  that $\alg{E} \Jabsorbs \alg{F}$, and that there is a
  fence of even length from $a$ to $a'$ in $F$. Then there exists 
  a fence of even length from $a$ to $a'$ in $E$ as well.
\end{lemma}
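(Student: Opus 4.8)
The plan is to push the given fence from $F$ into $E$ by means of the J\'onsson absorption chain, arranged as a two-dimensional grid: one direction indexed by the chain, the other by the edges of the fence. Let $d_0, d_1, \dots, d_n$ be a J\'onsson absorption chain witnessing $\alg E \Jabsorbs \alg F$, regarded as terms of $\algA$ acting on pairs coordinatewise, so that $d_i(E, F, E) \subseteq E$, $d_i(x,y,y) = d_{i+1}(x,x,y)$, $d_0(x,y,z) = x$ and $d_n(x,y,z) = z$. Write the fence of even length $k$ from $a$ to $a'$ in $F$ as $a = g_1, g_2, \dots, g_{k+1} = a'$, and list its $k$ edges as pairs $\pi_1, \dots, \pi_k \in F$, where $\pi_l = (g_l, g_{l+1})$ for $l$ odd and $\pi_l = (g_{l+1}, g_l)$ for $l$ even.

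For each $i \in \{0, \dots, n\}$ I would form the fence $\Phi_i$ in $E$ obtained by applying $d_i$ to every edge of this fence while feeding in the two edges we already have in $E$ as the outer arguments: set $\pi_l^i = d_i\big((a,b),\, \pi_l,\, (a',b')\big)$. Since $(a,b), (a',b') \in E$ and $\pi_l \in F$, absorption gives $\pi_l^i \in E$. Then one checks the (easy) bookkeeping: consecutive edges $\pi_l, \pi_{l+1}$ of the original fence meet at the vertex $g_{l+1}$, which occupies the same coordinate in both pairs, so $\pi_l^i$ and $\pi_{l+1}^i$ meet at the common value $d_i(a, g_{l+1}, a')$ or $d_i(b, g_{l+1}, b')$ accordingly, and the forward/backward pattern is preserved; hence $\Phi_i$ is a fence of the same (even) length $k$ in $E$ running from $d_i(a, g_1, a') = d_i(a,a,a')$ to $d_i(a, g_{k+1}, a') = d_i(a,a',a')$. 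The J\'onsson identity $d_i(x,y,y) = d_{i+1}(x,x,y)$ matches the last vertex of $\Phi_i$ with the first vertex of $\Phi_{i+1}$; and $d_0(x,y,z) = x$ makes $\Phi_0$ start at $a$ (it merely wiggles back and forth along the edge $(a,b)$), while $d_n(x,y,z) = z$ makes $\Phi_n$ end at $a'$.

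Finally I would concatenate $\Phi_0, \Phi_1, \dots, \Phi_n$ into a single fence from $a$ to $a'$ in $E$, of length $(n+1)k$, which is even. I expect the only real care to be needed in this last step: a fence is required to alternate forward and backward edges, so one must verify that gluing the pieces respects this alternation — and this is exactly the point where the hypothesis that the original fence has \emph{even} length is used, since each $\Phi_i$ has the even length $k$ and so the global parity of edge positions is unaffected by the seams. No genuinely algebraic obstacle arises beyond this; the content is entirely in choosing the right grid.
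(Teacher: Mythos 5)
Your proof is correct and takes essentially the same approach as the paper: the paper phrases the construction as applying each $d_i$ to the columns of the $3\times(k+1)$ matrix whose rows are the constant fence $a,b,a,\dots,a$, the given fence in $F$, and the constant fence $a',b',a',\dots,a'$, then concatenating the resulting fences via $d_i(a,a',a')=d_{i+1}(a,a,a')$; your edge-by-edge description $\pi_l^i = d_i((a,b),\pi_l,(a',b'))$ is just a different bookkeeping of the same computation, and your parity check for the concatenation matches the role of even length in the paper.
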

\begin{proof}
  Let $d_0,d_1,\dots,d_n$ be a Jónsson absorbing chain witnessing $\alg{E} \Jabsorbs \alg{F}$.
  For each $i$ in $1,\dots, n-1$, we construct an even legth fence from $d_i(a,a,a')$ to
  $d_i(a,a',a')$. Since $d_i(a,a',a')=d_{i+1}(a,a,a')$, we can then concatenate
  those fences to obtain a long fence from $d_1(a,a,a')=a$ to
  $d_{n-1}(a,a',a')=a'$.

  Let $a,f_2,f_3,\dots,f_{k-1},a'$ be a fence from $a$ to $a'$ in $F$. Consider the matrix
  \[
  \begin{pmatrix}
    a&b&a&b&\dots&b&a\\
    a&f_2&f_3&f_4&\dots&f_{k-1}&a'\\	
    a'&b'&a'&b'&\dots&b'&a'\\
  \end{pmatrix}.
\]
The top and bottom rows are fences in $E$, the middle row is a fence in $F$.
Applying $d_i$ to the columns of this matrix and using $\alg{E}\Jabsorbs \alg{F}$, we get a
fence in $E$ from $d_i(a,a,a')$ to $d_i(a,a',a')$, as required.
\end{proof}

By replacing fences by directed paths and using the same construction, we get the
following lemma.

\begin{lemma}\label{lemDirectedConnectivity}
  Assume that $\alg{E},\alg{F}\leq \algA^2$, that $(a,a),(b,b)\in E$ for some $a,b\in
  A$, that $\alg{E}\Jabsorbs \alg{F}$, and that there is a
  directed path from $a$ to $b$ in the digraph $(A,F)$. Then there exists 
  a directed path from $a$ to $b$ in $(A,E)$ as well.
\end{lemma}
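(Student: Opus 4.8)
The plan is to run the proof of Lemma~\ref{lemFence} essentially verbatim, with directed paths in place of fences and with the two loops $(a,a),(b,b)\in E$ playing the role that the two edges $(a,b),(a',b')\in E$ played there. Fix a Jónsson absorbing chain $d_0,d_1,\dots,d_n$ witnessing $\alg{E}\Jabsorbs\alg{F}$ (so $d_i(E,F,E)\subseteq E$ for all $i$, together with the identities of Definition~\ref{def-jonsson-abs}), and fix a directed path $a=g_1,g_2,\dots,g_m=b$ in the digraph $(A,F)$. For each $i=1,\dots,n-1$ I will produce a directed path in $(A,E)$ from $d_i(a,a,b)$ to $d_i(a,b,b)$.

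To build the $i$-th piece, form the $3\times m$ matrix
\[
\begin{pmatrix}
a & a & \cdots & a & a\\
a & g_2 & \cdots & g_{m-1} & b\\
b & b & \cdots & b & b
\end{pmatrix}.
\]
The top row is the constant tuple $(a,\dots,a)$ and the bottom row is the constant tuple $(b,\dots,b)$; since $(a,a)\in E$ and $(b,b)\in E$, each of these rows is a (trivial) directed path in $(A,E)$, and the middle row is a directed path in $(A,F)$ by the choice of the $g_j$. Apply $d_i$ coordinatewise to each column. For consecutive columns $j$ and $j+1$, the pairs formed by the top, middle, and bottom entries lie in $E$, $F$, $E$ respectively, so $d_i(E,F,E)\subseteq E$ gives that the resulting pair lies in $E$; hence the sequence of column images is a directed path in $(A,E)$. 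Its first vertex is $d_i(a,g_1,b)=d_i(a,a,b)$ and its last vertex is $d_i(a,g_m,b)=d_i(a,b,b)$, as wanted.

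It remains to glue the $n-1$ pieces. The identity $d_i(x,y,y)=d_{i+1}(x,x,y)$ gives $d_i(a,b,b)=d_{i+1}(a,a,b)$, so the last vertex of the $i$-th piece equals the first vertex of the $(i+1)$-st; concatenating therefore yields a directed path in $(A,E)$ from $d_1(a,a,b)$ to $d_{n-1}(a,b,b)$. Finally, $d_0(x,y,z)=x$ combined with $d_0(x,y,y)=d_1(x,x,y)$ gives $d_1(a,a,b)=d_0(a,b,b)=a$, and $d_n(x,y,z)=z$ combined with $d_{n-1}(x,y,y)=d_n(x,x,y)$ gives $d_{n-1}(a,b,b)=d_n(a,a,b)=b$. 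So the concatenated path runs from $a$ to $b$ in $(A,E)$, which is exactly the conclusion.

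There is no real obstacle here beyond the endpoint bookkeeping; the one point genuinely worth stating carefully is that the constant rows of the matrix are legitimate directed paths in $(A,E)$, which is precisely where the hypothesis $(a,a),(b,b)\in E$ is used — in Lemma~\ref{lemFence} this role was instead played by the edges $(a,b),(a',b')$, traversed alternately forwards and backwards to build a fence, and the present directed version is obtained simply by replacing those alternating traversals with the loops.
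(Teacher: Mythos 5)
Your proof is correct and is exactly the adaptation of Lemma~\ref{lemFence} that the paper itself invokes (the paper's ``proof'' is the single line ``By replacing fences by directed paths and using the same construction''). The constant rows replacing the alternating fence rows, the use of $d_i(E,F,E)\subseteq E$, and the endpoint bookkeeping via the directed Jónsson identities are all the intended argument.
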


\subsubsection{Absorption and pp-definitions}

It is well known that the set of subpowers of an algebra is closed under primitive positive (pp-) definitions. 
The following lemma is a version of this fact for absorption and J\'onsson absorption.

\begin{lemma}\label{lemAbsPP}
  Assume that a subpower $R$ of $\algA$ is defined by 
  \[
    R=\{(x_1,\dots,x_n) \colon \exists y_1,\dots,y_m,\, R_1(\sigma_1)\AND
    R_2(\sigma_2)\AND\dots\AND R_k(\sigma_k)\},
  \]
  where $R_1,\dots,R_k$ are subpowers of $\algA$ regarded as predicates
   and $\sigma_1$,\dots,$\sigma_k$ stand for sequences of (free or bound) variables. 
  Let $S_1,\dots,S_k$ be subpowers of $\algA$ such that $S_i\absorbs \alg{R}_i$ (resp.
  $S_i\Jabsorbs \alg{R}_i$) for all $i$. Then the subpower
  \[
    S=\{(x_1,\dots,x_n)\colon \exists y_1,\dots,y_m,\, S_1(\sigma_1)\AND
    S_2(\sigma_2)\AND\dots\AND S_k(\sigma_k)\},
  \]
  absorbs (resp. Jónsson absorbs) $\alg{R}$.
\end{lemma}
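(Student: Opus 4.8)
The plan is to build the absorption term (resp.\ Jónsson absorption chain) for $S \absorbs \alg{R}$ directly from the given terms witnessing $S_i \absorbs \alg{R}_i$, using the fact that pp-definitions commute with coordinatewise application of terms. First I would invoke the ``common absorption term'' construction recalled after the definition of absorption: iterating the $s \star t$ operation, we may assume that a single idempotent term $t(z_1,\dots,z_p)$ simultaneously witnesses $S_i \absorbs \alg{R}_i$ for every $i=1,\dots,k$. (For the Jónsson case, Lemma~\ref{common-jabs} plays the same role: a single Jónsson absorption chain $d_0,\dots,d_N$ witnesses $S_i \Jabsorbs \alg{R}_i$ for all $i$ at once.) This reduces the problem to transporting one term, or one chain, through the pp-formula.

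Next I would verify that this common term $t$ is already an absorption term for $S \absorbs \alg{R}$. Take tuples $r^{(1)},\dots,r^{(p)} \in R$ with at most one, say $r^{(j)}$, allowed to lie outside $S$ (the rest in $S$). For each, pick witnesses for the existential quantifiers, so that each $r^{(\ell)}$ extends to a tuple $\hat r^{(\ell)}$ on the variables $x_1,\dots,x_n,y_1,\dots,y_m$ satisfying all conjuncts $R_i(\sigma_i)$. Apply $t$ coordinatewise to $\hat r^{(1)},\dots,\hat r^{(p)}$; call the result $\hat s$. Since each $R_i$ is a subpower of $\algA$, $\hat s$ restricted to $\sigma_i$ lies in $R_i$, so $\hat s$ witnesses membership in $R$ of its restriction $s$ to $x_1,\dots,x_n$. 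The point is that $s \in S$: for each conjunct $i$, the tuple $\hat r^{(\ell)}|_{\sigma_i}$ lies in $S_i$ whenever $\ell \neq j$ and in $R_i$ when $\ell = j$ (if $\sigma_i$ even mentions the coordinate where $r^{(j)}$ deviated from $S$ — otherwise it lies in $S_i$ anyway), so by $S_i \absorbs \alg{R}_i$ with term $t$ we get $\hat s|_{\sigma_i} \in S_i$; as this holds for all $i$, $s$ satisfies the defining formula of $S$. Hence $s \in S$, which is exactly the absorption condition. The Jónsson case is identical, applied chain-link by chain-link: each $d_i$ sends $(S,R,S)$-patterned inputs on $\sigma_i$ into $S_i$, so it sends $(S,R,S)$-patterned tuples over $R$ into $S$, and the chain identities $d_i(x,y,y)=d_{i+1}(x,x,y)$, $d_0(x,y,z)=x$, $d_N(x,y,z)=z$ are preserved since they are identities that hold in $\algA$ and are inherited by any subpower.

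I expect the only real subtlety — not an obstacle so much as a bookkeeping point — to be handling the existential variables and the ``at most one deviating argument'' condition carefully: one must observe that deviating in a single coordinate of the free tuple translates, after choosing witnesses, to deviating in a single argument of $t$ applied to the $\hat r^{(\ell)}$, and that this is compatible with every conjunct simultaneously. This is immediate once the common-term reduction is in place, since then the same $t$ (resp.\ the same chain) works for all conjuncts, so there is no conflict between the witnesses required by different $R_i$. Everything else is the routine fact that coordinatewise term application respects pp-definitions over an algebra, which is exactly why the set of subpowers is closed under pp-definitions in the first place.
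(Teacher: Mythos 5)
Your proposal is correct and follows essentially the same route as the paper: reduce to a single common absorption term (resp.\ common Jónsson chain) via the $s \star t$ construction and Lemma~\ref{common-jabs}, then apply it coordinatewise to the matrix of witnessed tuples and verify each conjunct using $S_i \absorbs \alg{R}_i$. The parenthetical remark about whether $\sigma_i$ ``mentions the coordinate where $r^{(j)}$ deviated'' is a harmless but slightly off-target aside — what matters is simply that for every conjunct $i$, the restriction of $\hat r^{(\ell)}$ to $\sigma_i$ lies in $S_i$ for $\ell \neq j$ and in $R_i$ for $\ell = j$, which you also state correctly.
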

\begin{proof}
  Let $t$ be a $k$-ary common absorption term for all $S_i\absorbs \alg{R}_i$ (see the remark above Proposition~\ref{abs-and-nu} and Lemma~\ref{common-jabs}). We
  show that $t$ is also an absorption term for $S\absorbs \alg{R}$. For
  simplicity, we verify this fact only in the case when the exceptional
  argument of $t$ is the last one. So, assume that
  $r_{i,j}$ are members of $A$ such that
  $(r_{j,1},\dots,r_{j,n})\in S$ for each $j=1,2,\dots,k-1$ and
  $(r_{k,1},\dots,r_{k,n})\in R$. Examining the formulas for $S$ and $R$, we see that
  for each $j=1,2,\dots,k-1$ there exist $y_{j,1},\dots,y_{j,m}$ so that 
  $(r_{j,1},\dots,r_{j,n},y_{j,1},\dots,y_{j,m})$ satisfy the conjunction
  $S_1(\sigma_1)\AND S_2(\sigma_2)\AND\dots\AND S_k(\sigma_k)$, while for $j=k$
  we have $y_{k,1},\dots,y_{k,m}$ such that
  $(r_{k,1},\dots,r_{k,n},y_{k,1},\dots,y_{k,m})$ satisfy
  $R_1(\sigma_1)\AND R_2(\sigma_2)\AND\dots\AND R_k(\sigma_k)$.

  We consider the matrix
  \[
    \begin{pmatrix}
      r_{1,1}&r_{1,2}&\dots&r_{1,n-1}&r_{1,n}&y_{1,1}&y_{1,2}&\dots&y_{1,m}\\
      r_{2,1}&r_{2,2}&\dots&r_{2,n-1}&r_{2,n}&y_{2,1}&y_{2,2}&\dots&y_{2,m}\\
      &&\vdots\\                                
      r_{k,1}&r_{k,2}&\dots&r_{k,n-1}&r_{k,n}&y_{k,1}&y_{k,2}&\dots&y_{k,m}\\
    \end{pmatrix}.
  \]

  Applying $t$ to the columns of this matrix yields a tuple
  $(s_1,\dots,s_n,y'_1,\dots,y'_m)$. Using $S_i\absorbs \alg{R}_i$, it is
  straightforward to show that this tuple satisfies each conjunct
  $S_i(\sigma_i)$ of the
  formula
  $S_1(\sigma_1)\AND \dots\AND S_k(\sigma_k)$.

  The proof for J\'onsson absorption is similar.
\end{proof}

Since we can use the above lemma in the case when
$\alg{R}_i=\algA^n$, we immediately obtain that the set of (J\'onsson) absorbing subpowers
of a given algebra is closed under primitive positive definitions, as long as
those definitions do not use the equality relation. There is a good reason for excluding the equality relation:
%
\begin{proposition}
  Let $\algA$ be a finite algebra, $|A|>1$. Then the relation $\Delta=\{(a,a) \colon a\in A\}$
  is never a J\'onsson absorbing subpower of $\algA$.
\end{proposition}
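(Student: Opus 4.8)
The plan is to argue by contradiction: suppose $\Delta \Jabsorbs \algA^2$ via a J\'onsson absorption chain $d_0,d_1,\dots,d_n$, and derive a contradiction purely from the defining equations of such a chain. Note first that $\Delta$ really is a subuniverse of $\algA^2$, so the statement is not vacuous, and that each $d_i$ is a ternary term in the signature of $\algA$ whose interpretation in $\algA^2$ is computed coordinatewise.

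The key step is to read off what the absorption condition $d_i(\Delta,A^2,\Delta)\subseteq\Delta$ says about the term operation $d_i$ of $\algA$ itself. Taking the first and third arguments to be $(a,a),(b,b)\in\Delta$ and the middle argument an arbitrary $(c,c')\in A^2$, the tuple $d_i\big((a,a),(c,c'),(b,b)\big)=\big(d_i(a,c,b),\,d_i(a,c',b)\big)$ must lie in $\Delta$, i.e.\ $d_i(a,c,b)=d_i(a,c',b)$ for all $a,b,c,c'\in A$. Hence the term operation $d_i$ of $\algA$ does not depend on its middle coordinate; in particular $d_i(a,a,b)=d_i(a,b,b)$ for every $a,b\in A$.

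Now I would telescope the chain. For each $i<n$, evaluating the identity $d_i(x,y,y)=d_{i+1}(x,x,y)$ at $x=a$, $y=b$ and combining it with the previous observation yields $d_i(a,a,b)=d_i(a,b,b)=d_{i+1}(a,a,b)$. Chaining these equalities gives $d_0(a,a,b)=d_1(a,a,b)=\dots=d_n(a,a,b)$ for all $a,b\in A$. But $d_0(x,y,z)=x$ forces $d_0(a,a,b)=a$, while $d_n(x,y,z)=z$ forces $d_n(a,a,b)=b$, so $a=b$ for all $a,b\in A$, contradicting $|A|>1$.

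There is essentially no obstacle here; the only point requiring a little care is the passage in the second paragraph, where one uses that a term of the power $\algA^2$ acts on pairs coordinatewise, so that membership in $\Delta$ translates into a genuine equality between two values of the corresponding term operation of $\algA$ (and that the chain identities, being term identities, hold in $\algA$ in particular). Everything after that is the short telescoping computation above.
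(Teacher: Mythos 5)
Your proof is correct and follows essentially the same route as the paper's: use the absorption condition $d_i(\Delta,A^2,\Delta)\subseteq\Delta$ to show each $d_i$ is independent of its middle argument, then telescope along the chain identities $d_i(x,y,y)=d_{i+1}(x,x,y)$ to force $a=b$ for all $a,b\in A$.
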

\begin{proof}
  Assume that there is a chain $d_0,\dots,d_n$ of J\'onsson absorbing terms for
  $\Delta\Jabsorbs \algA^2$. 
  Then for every $a,b,c,d$ in $A$ and each $d_i$ we have
  \begin{align*}
  d_i\left(\begin{pmatrix}a\\a\end{pmatrix},\begin{pmatrix}b\\c\end{pmatrix},\begin{pmatrix}d\\d\end{pmatrix}\right)\in\Delta.
  \end{align*}
  Written in a different way, $d_i(a,b,d)=d_i(a,c,d)$ for all $a,b,c,d$. That
  is,
  $d_i$ does not depend on its second input. But then from the equations
  $d_i(x,y,y)=d_{i+1}(x,x,y)$ we get
  $x=d_0(x,y,z)=d_1(x,y,z)=\dots=d_n(x,y,z)=z$, so the algebra
  $\algA$ must be trivial.
\end{proof}

\subsection{Witnessing lack of absorption}\label{secBlockers}
%
By Proposition~\ref{abs-jabs},
one way to witness that $B$ does not absorb $\algA$ is to show that $B$ does not J\'onsson absorb $\algA$.
Another way is to find a $B$-blocker, which is a special
kind of a cube term blocker, introduced
in~\cite{markovic-maroti-mckenzie-blockers}:
\begin{definition} \label{def-blocker}
  Let $B\leq \algA$. We say that $(C,D)$  is a \emph{$B$-blocker} of $\algA$ if
\begin{enumerate}
  \item $D \leq \algA$,
  \item $\emptyset\neq C\subset D$,
  \item $C\cap B=\emptyset$,
  \item $D\cap B\neq\emptyset$, and
  \item \label{itmCondBBlocker} $\forall n,  D^n\setminus (D\setminus C)^n \leq \algA^n$ (see
  Figure~\ref{figBBlocker}).
\end{enumerate}
\end{definition}

\begin{figure}
  \begin{center}
    \begin{tikzpicture}[node distance=2cm]
      \node [vertex,label=below:${(c,c)}$] (cc) {};
      \node [vertex,label=below:${(c,d)}$,above of=cc](cd) {};
      \node [vertex,label=below:${(d,c)}$,right of=cc](dc) {};
      \node [vertex,label=below:${(d,d)}$,above of=dc](dd) {};
      \node [inner sep=0,minimum size=0] at ($(cc)+(-1.4,-1.4)$) (CC) {};
      \node [inner sep=0,minimum size=0]at ($(cd)+(-1.4,1.4)$) (CD) {};
      \node [inner sep=0,minimum size=0,label=below
      right:$D^2\setminus (D\setminus C)^2$] at ($(dc)+(1.4,-1.4)$) (DC) {};
      \node [inner sep=0,minimum size=0] at ($(cd)!0.5!(dc)$) (DD1){};
      \node [inner sep=0,minimum size=0] at ($(DD1)+(2.4cm,0)$) (DD2){};
      \node [inner sep=0,minimum size=0] at  ($(DD1)+(0,2.4cm)$)(DD0){};
      \node at ($(cd)!0.5!(cc)+(-2,0)$) (D1){$D$};
      \node at ($(dc)!0.5!(cc)+(0,-2)$) (D2){$D$};
      \node at ($(DD2)!0.5!(DC)+(0.3,0)$) (C1){$C$};
      \node at ($(CD)!0.5!(DD0)+(0,0.3)$) (C2){$C$};
      \node [above right of=dd, node distance=1.8cm] {$(B\cap D)^2$};
      \draw  (CD)--(DD0)--(DD1)--(DD2)--(DC)--(CC)--(CD);
      \draw (dd) circle (0.9cm);
  \end{tikzpicture}
  \end{center}
  \caption{The relations $D^n\setminus (D\setminus C)^n$ (square with a corner
    cut off) and $(B\cap
    D)^n$ (disc) for $n=2$.}
  \label{figBBlocker}
\end{figure}
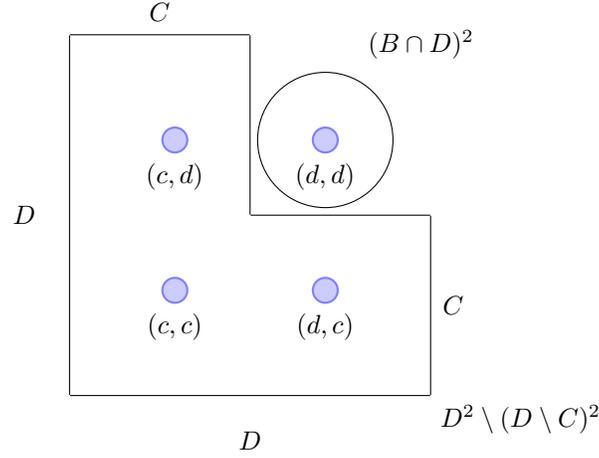

Observe that the last property for $n=1$ says that $C$ is a subuniverse of $\algA$.

We remark that $(C,D)$ is a cube term blocker in the sense of~\cite{markovic-maroti-mckenzie-blockers}
if and only if it is a $\{b\}$-blocker for some $b \in A$. 

\begin{proposition} \label{blocker-noabs}
Let $B \leq \algA$. If $\algA$ has a $B$-blocker, then $B$ does not absorb $\algA$.
\end{proposition}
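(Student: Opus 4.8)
The plan is to argue by contradiction: suppose $(C,D)$ is a $B$-blocker of $\algA$ and that some idempotent term $t(x_1,\dots,x_n)$ witnesses $B\absorbs\algA$. The idea is to feed $t$ a cleverly chosen matrix of arguments drawn from $D$, so that each row lies in $B\cap D$ except possibly one entry per row, and then to use property~(\ref{itmCondBBlocker}) of the blocker to produce a contradiction with $C\cap B=\emptyset$ and $D\cap B\neq\emptyset$.

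First I would fix elements $c\in C$ (so $c\notin B$) and $b\in D\cap B$, which exist by conditions (2)–(4) of Definition~\ref{def-blocker}. Now consider the $n\times n$ matrix over $D$ whose $i$-th row is $(b,b,\dots,b,c,b,\dots,b)$ with the $c$ in position $i$; equivalently, the matrix is $c$ on the diagonal and $b$ off the diagonal. Reading the matrix by columns, column $j$ is the tuple $(b,\dots,b,c,b,\dots,b)^{T}$ with $c$ in position $j$, so every column lies in $D^n$ but in no column do all coordinates lie in $D\setminus C$ — in fact each column has exactly one coordinate (the $j$-th) equal to $c\in C$. Hence every column lies in $D^n\setminus(D\setminus C)^n$, which by property~(\ref{itmCondBBlocker}) is a subuniverse of $\algA^n$. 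Applying $t$ coordinatewise (i.e.\ to the columns) therefore yields a tuple in $D^n\setminus(D\setminus C)^n$; call it $(s_1,\dots,s_n)$. So at least one coordinate $s_i$ belongs to $C$, hence $s_i\notin B$.

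On the other hand, $s_i=t(r_{i,1},\dots,r_{i,n})$ where $(r_{i,1},\dots,r_{i,n})$ is the $i$-th \emph{row} of the matrix, which is $(b,\dots,b,c,b,\dots,b)$ with the single exceptional entry $c\in A$ in position $i$ and all other entries equal to $b\in B$. By the absorption property of $t$ (with the exceptional argument in position $i$), we get $s_i\in B$. This contradicts $s_i\notin B$, completing the proof.

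I do not expect a serious obstacle here; the argument is a direct two-line application of the blocker axiom, essentially the same trick used to show algebras with NU terms have no cube term blockers. The only point that needs a word of care is the bookkeeping of rows versus columns: the blocker condition is about columns (the argument tuples fed into the induced operation of $\algA^n$), while the absorption condition is about rows (each is a single application of $t$ with at most one argument outside $B$), and one must check that the same $n\times n$ matrix serves both roles. A symmetric diagonal matrix makes this transparent, so the write-up can be kept short.
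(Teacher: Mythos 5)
Your proof is correct and is essentially the paper's argument: same diagonal matrix with $c$ on the diagonal and an element of $D\cap B$ off it, same use of condition~(\ref{itmCondBBlocker}) to keep the output in $D^n\setminus(D\setminus C)^n$, and same use of the absorption property row-by-row to force the output into $B^n$, yielding the contradiction with $C\cap B=\emptyset$. The only cosmetic difference is which of the matrix's two symmetric roles you call ``rows'' and ``columns''; since the matrix is symmetric this is immaterial, as you yourself note.
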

\begin{proof}
  Assume that there exists a $B$-blocker $(C,D)$, but $B \absorbs \algA$ by an $n$-ary term $t$. Let $c$ be some
  member of $C$ and $d$ a member of $D\cap B$. Consider the $n \times n$ matrix
\[
  \left(\begin{matrix}
      c&d&d&\dots&d&d\\
      d&c&d&\dots&d&d\\
      d&d&c&\dots&d&d\\
       &\vdots\\
      d&d&d&\dots&c&d\\
      d&d&d&\dots&d&c\\
  \end{matrix}\right).
\]
Each row lies in $D^n \setminus (D \setminus C)^n \leq \algA^n$, therefore $t$ applied to the columns is an $n$-tuple $\tupl{a}$ from $D^n \setminus (D \setminus C)^n$. 
But each column consists of elements in $D \leq \algA$ and all but one of the entries in each column are from $B \absorbs \algA$. It follows that $\tupl{a} \in (B \cap D)^n \subseteq (D \setminus C)^n$, a contradiction.   
\end{proof}

The absence of $B$-blockers by itself does not guarantee that $B$ is absorbing:
Consider the full idempotent reduct $\algA$ of the group $\zet_2$ and choose
$B=\{0\}$. Since $\zet_2$ has a Malcev term $m(x,y,z)=x+y+z\pmod 2$, there is
no $B$-blocker of $\algA$. However, were some $t(x_1,\dots,x_n)=\sum a_i x_i$ an absorption term for
$\{0\}\absorbs \algA$, we would run into trouble when trying to satisfy the
following set of equalities simultaneously:
\begin{align*}
  t(1,0,0,\dots,0)&=0,\\
  t(0,1,0,\dots,0)&=0,\\
  t(0,0,1,\dots,0)&=0,\\
  \vdots\\
  t(0,0,0,\dots,1)&=0.
\end{align*}
The only linear term that satisfies these equalities is the constant zero term, which is not idempotent.

In the arguments to come, it will be useful to have a description of
absorption that talks about subpowers rather then terms. 
If $J\subset I$ are sets of indices and $R\leq \algA^I$, we denote by $\pi_{\widehat J}(R)$ 
the projection of $R$ onto the coordinates in $I\setminus J$. For $J = \{j\}$,
we write simply $\pi_{\widehat j}(R)$.

\begin{definition}
A subuniverse $R$ of $\algA^k$ is called \emph{$B$-essential} if $R\cap B^k=\emptyset$ and, for every $i=1,2,\dots k$, 
we have $\pi_{\widehat i} (R)\cap B^{k-1}\neq \emptyset$.
\end{definition}

Observe that if $R \leq \algA^k$ is $B$-essential, then by fixing the first coordinate to $B$, that is, by defining
\[
\{(a_2, \dots, a_k) \colon \exists b \in B,\, (b,a_2, \dots, a_k) \in R\},
\] 
we get a $B$-essential subuniverse of $\algA^{k-1}$.  
Therefore, the set of arities of $B$-essential subpowers of an algebra is a downset in $\mathbb{N}$.

\begin{proposition}\label{propEssentialRel}
Let $\algA$ be a finite algebra and $B \leq \algA$. Then $\algA$ has a $k$-ary
absorption term witnessing $B\absorbs\algA$ if and only if $\algA$ has no $k$-ary essential
subpower. In particular, $B\absorbs\algA$ if and only if there exists a $k$
such that $\algA$ has no $k$-ary essential subower.
\end{proposition}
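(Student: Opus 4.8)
The plan is to prove both directions of Proposition~\ref{propEssentialRel} by relating an absorption term directly to the kind of tuples that a $B$-essential relation must contain. The key observation is that a $k$-ary idempotent term $t$ fails to witness $B \absorbs \algA$ exactly when there exist inputs on which some absorption equation breaks, and such a collection of bad inputs is precisely what a $B$-essential subpower of arity $k$ encodes.

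First I would prove the contrapositive of the forward direction: if $\algA$ has a $k$-ary $B$-essential subpower $R \leq \algA^k$, then no $k$-ary term can witness $B \absorbs \algA$. Given any $k$-ary idempotent term $t$, for each $i$ pick (using $B$-essentiality) a tuple $\tupl{r}^{(i)} \in R$ whose projection omitting coordinate $i$ lies in $B^{k-1}$; that is, $\tupl{r}^{(i)}$ has all entries in $B$ except possibly the $i$-th. Form the $k \times k$ matrix whose $i$-th row is $\tupl{r}^{(i)}$, and apply $t$ to the columns. Since $R$ is a subuniverse of $\algA^k$ and each row lies in $R$, the resulting tuple $t(\text{columns})$ lies in $R$. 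On the other hand, the $j$-th column has all entries in $B$ except the entry in row $j$, so if $t$ were an absorption term the $j$-th coordinate of the output would lie in $B$; hence the output would lie in $R \cap B^k$, contradicting $R \cap B^k = \emptyset$.

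For the converse, suppose $\algA$ has no $k$-ary $B$-essential subpower; I want to produce a $k$-ary absorption term. The natural candidate is to look at the subpower $S$ of $\algA^{k \cdot \text{(something)}}$ generated by exactly the ``columns'' that an absorption term must handle, and read off a term from it. Concretely, let $X = \{(b_1, \dots, b_k) \in A^k : \text{at least } k-1 \text{ of the } b_i \text{ lie in } B\}$; these are the tuples of inputs on which we demand the output lie in $B$. Enumerate $X = \{\tupl{x}^{(1)}, \dots, \tupl{x}^{(N)}\}$ and let $R$ be the subpower of $\algA^N$ generated by the $k$ tuples $\tupl{g}_j = (x^{(1)}_j, \dots, x^{(N)}_j)$ for $j = 1, \dots, k$ — i.e.\ the rows are the coordinate-projections of $X$. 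An element of $R$ is exactly a tuple $(t(\tupl{x}^{(1)}), \dots, t(\tupl{x}^{(N)}))$ for some $k$-ary term $t$, and $t$ is a $k$-ary absorption term for $B \absorbs \algA$ precisely when this tuple lies in $B^N$. So I must show $R \cap B^N \neq \emptyset$. Suppose not; then I would like to extract from $R$ a $B$-essential subpower of arity $k$, contradicting the hypothesis. The idea is: because $R \cap B^N = \emptyset$ but $R$ contains each generator $\tupl{g}_j$ and each $\tupl{g}_j$ has all coordinates in $B$ except those coordinates $\ell$ where $x^{(\ell)}_j \notin B$ (of which, by definition of $X$, there is at most one per tuple $\tupl{x}^{(\ell)}$), one can project $R$ down to a suitably chosen $k$ coordinates to obtain a $k$-ary subpower missing $B^k$ yet having each single-coordinate-deleted projection meeting $B^{k-1}$.

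The main obstacle, and the step requiring the most care, is that last extraction: going from ``$R \leq \algA^N$ with $R \cap B^N = \emptyset$'' to ``a genuinely $k$-ary $B$-essential subpower.'' A naive projection of $R$ onto $k$ coordinates need not stay disjoint from $B^k$. The right move is a minimality argument: among all subpowers $R' \leq \algA^{k'}$ (for varying $k'$) that are pp-definable from $R$ by fixing some coordinates to elements of $B$ and projecting, and that still satisfy $R' \cap B^{k'} = \emptyset$, pick one of minimal arity $k'$. Minimality forces that deleting any single coordinate of $R'$ produces a projection meeting $B^{k'-1}$ (otherwise one could shrink further), so $R'$ is $B$-essential of arity $k'$; and because the set of arities of $B$-essential subpowers is a downset (as noted just before the proposition, via the fix-a-coordinate-to-$B$ construction), we get a $B$-essential subpower of arity exactly $k$ — provided $k' \le k$, which holds because $R$ itself has arity $N \ge k$ and each generator already has the ``all-but-one in $B$'' shape needed to start the reduction down to arity $k$. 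Assembling this carefully, together with the observation that the final ``in particular'' clause follows since $B \absorbs \algA$ iff some $k$-ary term works iff for some $k$ there is no $k$-ary $B$-essential subpower, completes the proof.
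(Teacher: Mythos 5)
The forward direction is correct and matches the paper exactly: the $k\times k$ matrix argument showing that a $k$-ary absorption term applied to witnesses of $B$-essentiality lands in $R\cap B^k$. Your construction of the $N$-ary relation $R$ generated by the coordinate projections over the set $X$ of "almost-$B$" inputs is also the same as the paper's relation, and you correctly reduce the problem to showing $R\cap B^N\neq\emptyset$.

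The gap is in the extraction step, which you yourself flag as the hard part. The minimality argument, as stated, produces a $B$-essential subpower of \emph{some} arity $k'$, but you assert "provided $k'\le k$" when what you actually need is $k'\ge k$: the set of arities of $B$-essential subpowers is a \emph{downset}, so from a $B$-essential relation of arity $k'$ you can only descend, and to contradict "no $k$-ary $B$-essential subpower" you must land on or above arity $k$. Even after correcting the inequality, nothing in the argument controls $k'$ from below -- fixing and projecting can in principle produce a nonempty $B^{k'}$-avoiding relation of arity smaller than $k$, and the justification you give ($N\ge k$ and the "all-but-one in $B$" shape of generators) does not rule this out. You would also need to explicitly exclude $R'=\emptyset$, which vacuously avoids $B^{k'}$ yet is not $B$-essential. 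The paper avoids both issues by proving a sharper inductive claim: for any $n$, if $R\le\algA^n$ admits $k$ pairwise disjoint index sets $I_1,\dots,I_k$ with $\pi_{\widehat{I_j}}(R)\cap B^{n-|I_j|}\neq\emptyset$, then $R\cap B^n\neq\emptyset$. This block structure is preserved under projecting away one coordinate, so the induction bottoms out exactly at $n=k$ (where the blocks are singletons and the hypothesis is precisely "no $k$-ary $B$-essential subpower"). Your construction of $R$ comes equipped with the natural partition $I_j=\{\ell : x^{(\ell)}_j\notin B\}$, so the paper's lemma applies directly; a raw "shrink until essential" minimality cannot see this structure and hence cannot guarantee the arity bound.
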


\begin{proof}
  Assume that $B\absorbs\algA$ by a $k$-ary absorption term $t$ and 
  assume that $R\leq \algA^k$ satisfies $\forall i, \pi_{\widehat i} (R)\cap
  B^{k-1}\neq \emptyset$. Then there exist $b_{ij}\in B$ and $a_i\in A$ so that
  the rows of the matrix
  \[
  \begin{pmatrix}
  a_1&b_{1,2}&b_{1,3}&\dots&b_{1,k}\\
  b_{2,1}&a_2&b_{2,3}&\dots&b_{2,k}\\
    b_{2,1}&b_{2,2}&a_3&\dots&b_{2,k}\\
		       &&&\ddots&\\
  b_{k,1}&b_{k,2}&b_{k,3}&\dots&a_{k}
  \end{pmatrix}
  \]
are all in $R$. By applying $t$ component-wise to the
colums of this matrix, we get a $k$-tuple that lies in $R\cap B^k$. Consequently, $\algA$ has no $B$-essential subpower of arity $k$.

The other implication is more interesting. Let $B\leq\algA$ and assume that
$\algA$ has no $k$-ary essential subpower. We shall show how to produce a $k$-ary absorption
term. 


We begin by extending the above statement to a bigger family of subpowers:
Assume that $n$ is a positive integer, $R\leq \algA^n$ and that there exists a family of
pairwise disjoint sets $I_1,\dots,I_k\subset \{1,2,\dots,n\}$ such that
$\pi_{\widehat{I_j}}(R)\cap B^{n-|I_j|}\neq \emptyset$. We claim that then $R\cap B^n\neq
\emptyset$. This can be easily proved by induction on $n$: For $n<k$, the claim
is trivial, while the case $n=k$ is equivalent to saying that $\algA$ has no $k$-ary
essential subpower. Assume now that the claim holds
for $n-1$ and show how to extend it to $n$: By the induction hypothesis, 
the relation $\pi_{\widehat{i}}(R)$ intersects with $B^{n-1}$ for each
$i=1,\dots,n$, so if $B^n\cap R=\emptyset$, then $R$ is an $n$-ary $B$-essential relation. 
However, the set of arities of $B$-essential relations is a downset and we know
that there is no $k$-ary $B$-essential relation, so $R$ cannot be $B$-essential
and we get $B^n\cap R\neq\emptyset$.


Consider now the $k|B|^{k-1}(|A|-|B|)$-ary relation 
\[
  R=\{(t(e_1,e_2,\dots,e_k))_{(e_1,\dots,e_k)\in I} \colon
\text{$t$ is a $k$-ary term of $\algA$}\},
\]
where $I$ is the set of all $k$-tuples $(e_1,e_2,\dots,e_k)\in A^k$ that
contain exactly one member of $A\setminus B$ and the remaining elements are from $B$.

From the definition of $R$ it follows that $R$ is a subuniverse of $\algA^I$. Moreover, if 
we let $I_j=\{(e_1,\dots,e_n) \colon e_j\in A\setminus B,\, \forall i\neq j,\, e_i\in B\}$,
we obtain a partition of $I$  such that 
$\pi_{\widehat{I_j}}(R)\cap B^{I\setminus I_j} \neq \emptyset$ (consider
the operation of projection to the $j$-th coordinate). Therefore, by the previous claim, $R$
 contains a tuple consisting entirely of elements of $B$. This tuple corresponds to a $k$-ary
absorption term for $B\absorbs \algA$.
\end{proof}

It will be convenient to extend the notion of $B$-essential subpower to countably infinitary powers:  A subuniverse $R$ of $\algA^{\en}$ is called $B$-essential if $R\cap B^{\en}=\emptyset$ and, for every $i \in \en$, 
we have $\pi_{\widehat i} (R)\cap B^{\en}\neq \emptyset$. 



\section{Absorption $=$ J\'onsson absorption $+$ no $B$-blockers}

We are ready to prove the main result, Theorem~\ref{MainThm}.
The implication (i) $\Rightarrow$ (ii) follows from Proposition~\ref{abs-jabs} and Proposition~\ref{blocker-noabs}.
Therefore, it is enough to prove the following theorem. 

\begin{theorem}\label{thmAbsChar}
Let $\algA$ be a finite idempotent algebra, $B\leq \algA$, $B\Jabsorbs \algA$, and assume that $\algA$ has no $B$-blocker. Then $B\absorbs \algA$.
\end{theorem}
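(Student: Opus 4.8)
The strategy is contrapositive in spirit: assuming $B \Jabsorbs \algA$ but $B \not\absorbs \algA$, we want to produce a $B$-blocker. By Proposition~\ref{propEssentialRel}, failure of absorption means that for \emph{every} arity $k$ there is a $k$-ary $B$-essential subpower $R_k \leq \algA^k$. Since the set of arities of $B$-essential subpowers is a downset, we actually get $B$-essential subpowers of all arities, and a standard compactness/König's-lemma argument lets us glue these into a single $B$-essential subuniverse $R \leq \algA^{\en}$ (this is exactly why the excerpt bothers to extend the definition of $B$-essential to countably infinitary powers). So the first step is: \emph{fix an infinitary $B$-essential relation $R \leq \algA^{\en}$.}

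**Building the blocker.** The second, and main, step is to manufacture the pair $(C,D)$ from $R$ together with the Jónsson absorption chain $d_0, \dots, d_n$ witnessing $B \Jabsorbs \algA$. The natural candidate for $D$ is a projection of $R$ onto a single coordinate, say $D = \pi_1(R)$ (or the subalgebra generated by the relevant elements); because $R$ is $B$-essential, $\pi_{\widehat 1}(R)$ meets $B^{\en}$, which should force $D \cap B \neq \emptyset$, giving condition (4). For $C$ one wants the set of values that can appear in coordinate $1$ of a tuple of $R$ whose \emph{other} coordinates are all forced out of $B$ in a controlled way — morally, the "bad" part of $D$. One must check $C \neq \emptyset$ (follows from $R \cap B^{\en} = \emptyset$, so in every tuple of $R$ some coordinate is outside $B$, and by shifting/symmetrizing coordinates we can assume it is coordinate $1$), that $C \subseteq D$ with $C \cap B = \emptyset$ (conditions (2),(3)), and — the crux — the closure condition (5): $D^m \setminus (D \setminus C)^m \leq \algA^m$ for all $m$.

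**Where Jónsson absorption enters.** The closure property (5) is where the hypothesis "$\algA$ has no $B$-blocker" has not yet been used and where "$B \Jabsorbs \algA$" must do real work: the Jónsson chain, via the pp-closure of (Jónsson-)absorbing subpowers (Lemma~\ref{lemAbsPP}) and the fence/connectivity transfer (Lemma~\ref{lemFence}, Lemma~\ref{lemDirectedConnectivity}), should show that if you take $m$ tuples from $D^m \setminus (D\setminus C)^m$ and apply a term, the result stays in that set — the point being that "at least one coordinate lands in $C$" is a connectivity-type condition that $E \Jabsorbs F$-style arguments preserve. Concretely, I expect to set up two subpowers of $\algA^2$ (an "$E$" built from $R$ and a "$D \setminus C$ versus $C$" dichotomy, and an "$F$" that is a coarser relation) with $E \Jabsorbs F$, then feed a fence through Lemma~\ref{lemFence} to conclude that membership in $C$ propagates. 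The delicate bookkeeping is keeping track of which coordinates of the infinitary $R$ are being used and ensuring the $I_j$-style disjoint-support argument from the proof of Proposition~\ref{propEssentialRel} still applies after these manipulations.

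**Closing the loop.** Once $(C,D)$ is verified to satisfy (1)--(5) it is a genuine $B$-blocker of $\algA$, contradicting the hypothesis; hence $B \absorbs \algA$. The main obstacle, as flagged, is step two's verification of condition (5): one has to extract from a \emph{single} infinitary essential relation and a \emph{finite} Jónsson chain a family of finitary relations closed under all terms, and the natural definition of $C$ may need to be adjusted (e.g. taken as a suitable intersection over all coordinates, or as a limit of the finite-arity constructions) before the closure argument goes through cleanly. A secondary subtlety is making sure $C$ is a proper subset of $D$ — i.e. $D \setminus C$ is nonempty — which is really where $D \cap B \neq \emptyset$ together with $C \cap B = \emptyset$ is used.
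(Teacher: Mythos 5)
Your high-level shape matches the paper's proof: both work by contraposition (no absorption gives essential subpowers of all arities, Proposition~\ref{propEssentialRel}), both pass to a countably infinitary $B$-essential relation, both extract a blocker $(C,D)$ from it, and both use Lemma~\ref{lemAbsPP} together with the fence/connectivity lemmas to verify condition~(5). However, several load-bearing ideas are missing from the sketch, and some of your guesses about the construction are wrong in ways that would sink the attempt.

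First, the infinitary $B$-essential relation cannot just be produced by a ``standard compactness/K\"onig's-lemma argument'' and then used as is. What the paper actually needs is an infinitary essential relation $R_\infty$ of a very rigid form, namely the one generated by all shifts of a single tuple $(a,b,b,b,\dots)$ with a fixed $a\in A$ and a fixed $b\in B$. Getting this costs two nontrivial lemmas: a Ramsey argument (Lemma~\ref{lemRamsey}) to force the off-diagonal entries of the witnessing matrix to take only two values $b_1,b_2\in B$, and then a directed-path/J\'onsson-absorption argument (Lemma~\ref{lemSymmetric}, using Lemma~\ref{lemDirectedConnectivity}) to force $b_1=b_2$ and a single $a$. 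Without this symmetrization, the subsequent manipulations ($S_T$ being invariant under all permutations of coordinates, Lemma~\ref{lembbbbb}) do not go through, and condition~(5) has no hope of being verified. Your sketch entirely omits this step, which is the bulk of the proof.

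Second, the paper does \emph{not} take a contrapositive inside a fixed $\algA$; it takes a minimal counterexample to the theorem with $|A|$ smallest. This minimality is used essentially (in Lemma~\ref{lemcinB} and again in Lemma~\ref{lemGeneralblocker}) to conclude that $\langle b,c\rangle = A$ for any $c\in C$, which is what forces $D=A$ and makes the pieces of condition~(5) click. Your proposal has $D=\pi_1(R)$, but the paper's blocker is $(C,A)$, and there is no route to that conclusion without the minimal-counterexample device.

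Third, your description of $C$ is backwards. The paper's $C=B^{+S_{T;1,2}}$ is the set of elements $c$ such that $(c,\beta,\beta,\dots)\in S_T$, i.e.\ $c$ appears in one coordinate while \emph{all other} coordinates are in $B$ — not ``other coordinates forced out of $B$''. Moreover $C$ is not defined once and for all from $R_\infty$; it is obtained by optimizing over all choices of a finite block $T$ of fixed coordinates (one picks $T$ so that $S_T$ is $B$-essential and $|B^{+S_{T;1,2}}|$ is as large as possible). The maximality of $C$ is what drives the contradiction arguments in Lemmas~\ref{lemBinaryblocker} and~\ref{lemGeneralblocker}; without it, you cannot show $S_{T;1,2}=A^2\setminus(A\setminus C)^2$ or its higher-arity analogues. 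So while your ``where J\'onsson absorption enters'' paragraph correctly anticipates the role of Lemma~\ref{lemAbsPP} and Lemma~\ref{lemFence}, the verification of~(5) hinges on the maximality of $C$ and the equality $\langle b,c\rangle=A$, neither of which your plan provides.
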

The rest of this section is devoted to the proof of the above statement. We take a counterexample to the theorem with $|A|$ minimal.
Thus $B$ is a subuniverse of $\algA$, $B\Jabsorbs \algA$, $\algA$ has no $B$-blocker, $B$ does not
absorb $\algA$, and $\algA$ is of minimal size among algebras with such a subuniverse $B$. 

By Proposition~\ref{propEssentialRel}, there are $B$-essential subpowers of
$\algA$ of arbitrary large arity. We will show that $\algA$ has a family of
highly symmetric $B$-essential subpowers of countably infinite arity, which we
use to produce a $B$-blocker of $\algA$.

We begin by showing that the $B$-essential relations can be chosen to be of a rather special kind. 
 In the
following sequence of lemmas, we will be using two wild card symbols: The letter
$\alpha$ stands for any suitable member of $A$ and $\beta$ stands for any
suitable member of $B$.
As an example, the meaning of the statement ``$(\beta,\beta)\in R$'' is ``there
exist $b_1,b_2\in B$ (which can be equal or different) such that $(b_1,b_2)\in R$''.

\begin{lemma}\label{lemRamsey}
  There exists a function $f:\en\to \en$ such that if $\algA$ has  an $f(N)$-ary $B$-essential
  subpower, then there exist $b_1,b_2\in B$ and $a_{1},\dots,a_{N} \in A$ such
  that the subuniverse of $\algA^N$ generated by the $N$-tuples
  \begin{align*}
    (a_1,b_2,b_2,b_2,&\dots,b_2,b_2)\\
    (b_1,a_2,b_2,b_2,&\dots,b_2,b_2)\\
    (b_1,b_1,a_3,b_2,&\dots,b_2,b_2)\\
    \vdots&\\
    (b_1,b_1,b_1,b_1,&\dots,a_{N-1},b_2)\\
  (b_1,b_1,b_1,b_1,&\dots,b_1,a_N).
  \end{align*}
is also $B$-essential.
\end{lemma}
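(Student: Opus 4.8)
The plan is to feed a sufficiently large $B$-essential subpower into Ramsey's theorem. First I would record the following normal form: if $R\le\algA^M$ is $B$-essential, then for each coordinate $i\in\{1,\dots,M\}$ there is a tuple $r^{(i)}\in R$ witnessing $\pi_{\widehat i}(R)\cap B^{M-1}\neq\emptyset$, i.e.\ with $\pi_{\widehat i}(r^{(i)})\in B^{M-1}$; since $R\cap B^M=\emptyset$, the $i$-th entry $a_i:=r^{(i)}_i$ must lie in $A\setminus B$, so each $r^{(i)}$ has \emph{exactly one} non-$B$ entry, located at position $i$. The goal is to find $N$ coordinates along which the corresponding tuples $r^{(i)}$ already line up into the required staircase.

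Next I would color each two-element subset $\{i,j\}$ of $\{1,\dots,M\}$, with $i<j$, by the pair $(r^{(j)}_i,\,r^{(i)}_j)\in B\times B$, and take $f(N)$ to be the Ramsey number that forces, among any $f(N)$ points, a monochromatic set of size $N$ for $|B|^2$ colors (one may coarsen to $|A|^2$ colors if one wants $f$ to depend only on $|A|$). Choosing a monochromatic $S=\{i_1<\dots<i_N\}$ of color $(b_1,b_2)$ gives $r^{(i_t)}_{i_s}=b_1$ and $r^{(i_s)}_{i_t}=b_2$ whenever $s<t$. Letting $g_s$ be the restriction of $r^{(i_s)}$ to the coordinates in $S$ (reindexed in increasing order), the monochromaticity says $g_s$ has $b_1$'s in positions $1,\dots,s-1$, the value $a_{i_s}$ in position $s$, and $b_2$'s in positions $s+1,\dots,N$ --- precisely the staircase in the statement. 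Let $S^{*}\le\algA^N$ be the subuniverse generated by $g_1,\dots,g_N$. Since $\pi_{\widehat s}(g_s)$ uses only the values $b_1,b_2\in B$, we get $\pi_{\widehat s}(S^{*})\cap B^{N-1}\neq\emptyset$ for every $s$, so one half of $B$-essentiality is immediate.

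The crux is to show $S^{*}\cap B^{N}=\emptyset$; note this does \emph{not} follow formally from $R\cap B^M=\emptyset$, because projections of $B$-essential relations need not be $B$-essential. Suppose $v\in S^{*}\cap B^{N}$; write $v=h(g_1,\dots,g_N)$ for an $N$-ary term $h$, and apply the \emph{same} $h$ to the tuples $r^{(i_1)},\dots,r^{(i_N)}\in R$, obtaining $w\in R$. For a coordinate $j=i_c\in S$, the staircase pattern forces the input vector $(r^{(i_1)}_{i_c},\dots,r^{(i_N)}_{i_c})$ to coincide entrywise with $((g_1)_c,\dots,(g_N)_c)$, so $w_{i_c}=v_c\in B$; for $j\notin S$, every $r^{(i_s)}_j$ lies in $B$ (as $j\neq i_s$), so $w_j\in B$ since $B$ is a subuniverse. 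Hence $w\in R\cap B^{M}$, a contradiction. This proves $S^{*}$ is $B$-essential, with the elements $a_1,\dots,a_N$ of the statement taken to be $a_{i_1},\dots,a_{i_N}$.

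The only genuine obstacle is spotting that $B$-essentiality of the generated subuniverse is not inherited for free, and that the remedy is to transport a hypothetical all-$B$ element of $S^{*}$ back along $h$ into the original relation $R$; everything else is a routine application of Ramsey's theorem once the coloring is chosen so as to remember both ``off-diagonal halves'' $r^{(j)}_i$ and $r^{(i)}_j$ of each pair $\{i,j\}$ at once.
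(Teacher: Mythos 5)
Your proof is correct and takes essentially the same route as the paper: same witness tuples with exactly one non-$B$ entry, same Ramsey coloring by the off-diagonal pair, same monochromatic staircase. The only cosmetic difference is how you verify $S^{*}\cap B^{N}=\emptyset$: you pull a hypothetical $v=h(g_1,\dots,g_N)\in B^N$ back to $w=h(r^{(i_1)},\dots,r^{(i_N)})\in R\cap B^M$ by hand, whereas the paper observes that $S^{*}$ is contained in the projection onto $I$ of $R$ intersected with the set of tuples that are in $B$ off $I$; these are two phrasings of the identical observation.
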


\begin{proof}
  This is a Ramsey-type theorem and we will use Ramsey's theorem (see eg.~\cite[Theorem 3.3]{a-course-in-combinatorics}) to prove it.
  Given $N$, we choose the number $f(N)$ so that whenever we have a complete symmetric
  graph $\relstr{G}$ with at least $f(N)$ vertices whose
  edges have been colored by $|B|^2$ colors, then there is a monochromatic
  complete subgraph of $\relstr{G}$ with at least $N$ vertices. 
  
  Suppose that $S$ is a $B$-essential subpower of $\algA$ of arity $f(N)$.
  For every $i$ from $1$ to $N$, we pick a tuple in $S$ that witnesses
  $\pi_{\widehat{i}}(S)\cap B^{N-1} \neq \emptyset$, and write these tuples as rows of an
  $f(N)\times f(N)$ matrix $M$. Using the wild cards $\alpha$ and $\beta$, we
  know that $M$ has the form
  \[M=
    \begin{pmatrix}
      \alpha & \beta & \beta &\dots &\beta\\
      \beta & \alpha & \beta &\dots &\beta\\
      \beta & \beta & \alpha &\dots &\beta\\
		   &&\vdots\\
      \beta &\beta&\beta&\dots&\alpha
  \end{pmatrix}.
  \]
  
  We will denote by $M_{ij}$ the entry of $M$ in the $i$-th row and
  $j$-th column.
  We now take the complete graph $\relstr{G}$ with the vertex set
  $\{1,2,\dots,f(N)\}$ and color the edge between $i$ and $j$, where $i<j$, by the pair $(M_{ij},M_{ji})$.
  By Ramsey's theorem, there exists a set of indices $I$ of size 
  $N$ such that the subgraph of $\relstr{G}$ induced by $I$ has monochromatic edges.
  What does this mean for $M$? If we look only at the rows and columns of $M$ with indices in
  $I$, we get an $N\times N$ submatrix of $M$ that looks like
   \[
    M'=\begin{pmatrix}
      a_1 & b_2 & b_2 &\dots &b_2\\
      b_1 & a_2 & b_2 &\dots &b_2\\
      b_1 & b_1 & a_3 &\dots &b_2\\
		   &&\vdots\\
      b_1 & b_1& b_1&\dots& a_N
  \end{pmatrix}
  \]
  for some $b_1,b_2\in B$.
  The rows of $M'$ have exactly the right form for the conclusion of the
  lemma. To finish the proof, we claim that the rows of $M'$ generate a $B$-essential subpower $T$ of $\algA$. 
  
  The generators themselves ensure that
  $\pi_{\widehat{i}}(T)\cap B^{N-1}\neq \emptyset$. Since we have obtained $M'$
  by restricting $M$ to indices $I\times I$, the relation $T$ is a subset of the subpower
  \[
    \pi_{I}(S\cap \{(s_1,\dots,s_{f(N)}) \colon s_i\in B\; \text{for all}\, i\not \in
    I\}),
  \]
  which means that $T\cap B^N=\emptyset$, and $T$ is $B$-essential.
\end{proof}

\begin{lemma}\label{lemSymmetric}
  For every 
  $K\in\en$ there exist $a\in A$ and $b\in B$ such that the
  subuniverse of $\algA^K$  generated by the tuples
  \[
  \begin{matrix}
    (a,&b,&\dots,&b,&b)\\
    (b,&a,&\dots,&b,&b)\\
       &&\ddots\\
    (b,&b,&\dots,&a,&b)\\
    (b,&b,&\dots,&b, &a)\\
\end{matrix}
\]
is $B$-essential.
\end{lemma}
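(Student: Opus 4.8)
The plan is to derive Lemma~\ref{lemSymmetric} from Lemma~\ref{lemRamsey} in two stages: first produce an ``ordered'' staircase (one fixed diagonal element, but possibly two distinct off-diagonal elements $b_1$ below the diagonal and $b_2$ above it), and then use the Jónsson absorption chain witnessing $B\Jabsorbs\algA$ to collapse $b_1$ and $b_2$ into a single $b$.

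First I would fix a Jónsson absorption chain $d_0,\dots,d_n$ for $B\Jabsorbs\algA$ and pick $N$ large relative to $K$, $|A|$ and $n$. Since $B$ does not absorb $\algA$, Proposition~\ref{propEssentialRel} guarantees that $\algA$ has a $B$-essential subpower of arity $f(N)$, so Lemma~\ref{lemRamsey} applies and yields $b_1,b_2\in B$ and $a_1,\dots,a_N\in A$ such that the subuniverse $U\leq\algA^N$ generated by the staircase tuples is $B$-essential; since the $i$-th generator lies in $B$ at every coordinate but the $i$-th and $U\cap B^N=\emptyset$, each $a_i$ must actually lie in $A\setminus B$. Because $A$ is finite, the pigeonhole principle produces an index set of size $M$ (with $M$ as large as we wish) on which the $a_i$ share a common value $a$; restricting $U$ to those coordinates and fixing the remaining ones inside $B$ gives a subuniverse $U'\leq\algA^M$ generated by tuples $g_1,\dots,g_M$, where $g_i$ equals $b_1$ on the first $i-1$ coordinates, $a$ on coordinate $i$, and $b_2$ afterwards. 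The check that $U'$ is again $B$-essential is the same bookkeeping as in the proof of Proposition~\ref{propEssentialRel}: a term sending the $g_i$ into $B^M$ would, applied to the original generators of $U$, produce an element of $U\cap B^N$.

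The real content is the last step, turning $U'$ into the fully symmetric staircase, and this is where I expect the difficulty to lie. The ordered staircase $U'$ differs from the sought configuration only in the asymmetry between its $b_1$-region and its $b_2$-region, and the Jónsson chain is precisely the tool for erasing it: the identity $d_j(B,A,B)\subseteq B$ says that a lone $A$-element placed between two $B$-elements gets absorbed, which is exactly what one needs at the interface between the two regions. Concretely, I would cut the $M$ coordinates into $K$ consecutive blocks and, inside each block, apply an appropriate composition of the $d_j$'s to that block's generators so as to make the block's contribution constant --- a fixed element of $A\setminus B$ on the ``active'' block, and (for a generator whose diagonal entry sits in a given block) the constant $b_1$ or $b_2$ on each other block according to whether it lies to the left or to the right --- the point being that the $d_j$'s replace the in-block $b_1$'s and $b_2$'s by $B$-values while passing the single diagonal $a$ through unchanged. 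Pulling $U'$ back along the blockwise-constant diagonal map $\algA^K\to\algA^M$ then gives a $B$-essential relation of arity $K$, and a further use of the chain --- now to interpolate between the value $b_1$ appearing on left blocks and $b_2$ on right blocks --- should show that this relation contains the symmetric $a^{*}/b$ staircase for suitable $a^{*}\in A\setminus B$ and $b\in B$. Making this symmetrization actually close up, rather than merely controlling both off-diagonal values, is the delicate point; I expect it to use the hypothesis $B\Jabsorbs\algA$ in an essential way, and possibly also the minimality of $|A|$ (which lets one reduce to the case where $\algA$ is generated by $B$ together with the single element $a$).
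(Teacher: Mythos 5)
Your first two steps --- invoking Lemma~\ref{lemRamsey} to get an ordered staircase with $b_1$ below the diagonal and $b_2$ above it, and using the pigeonhole principle to arrange a common diagonal element $a$ --- match what the paper does. But you then reach a genuine gap, and it is exactly where you yourself flag the difficulty: ``making this symmetrization actually close up.'' The chain identities $d_i(x,y,y)=d_{i+1}(x,x,y)$ together with $d_i(B,A,B)\subseteq B$ do not give a mechanism for replacing the pair $(b_1,b_2)$ by a single $b$ while preserving $B$-essentiality. Applying $d_i$ across columns of your staircase generators will put some element of $B$ off-diagonal, but you have no control over \emph{which} element of $B$ appears in each slot, no way to force them all equal, and --- most importantly --- no guarantee that the resulting subpower still misses $B^K$. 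The blockwise ``pull-back along a diagonal map and interpolate'' sketch is therefore a plan, not a proof, and I do not believe it can be turned into one directly.

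The paper closes this gap by an argument of a quite different shape: a descent on $|\langle b_1,b_2\rangle|$ rather than a one-shot interpolation. It calls $(b_1,b_2)$ a \emph{good} pair if it yields an ordered $B$-essential staircase of a certain arity depending on $|\langle b_1,b_2\rangle|$, picks a good pair with $|\langle b_1,b_2\rangle|$ minimal, and shows that if $b_1\neq b_2$ one can strictly shrink this subalgebra. The shrinking step is the real content: one defines two binary relations $E\subseteq F$ on $B$ by fixing carefully chosen slices of the staircase relation, gets $E\Jabsorbs\alg{F}$ from Lemma~\ref{lemAbsPP}, invokes Lemma~\ref{lemDirectedConnectivity} to obtain a directed path from $b_1$ to $b_2$ in $(B,E)$, and uses the second vertex $b_3$ on a shortest such path together with the backward-reachability set $B'$ (which excludes $b_1$, by minimality of the path) to certify that either $(b_1,b_1)$ or $(b_3,b_2)$ is a good pair generating a strictly smaller subalgebra. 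This is where J\'onsson absorption does its work. Also note that the minimality of $|A|$, which you speculate may be needed here, is in fact not used in this lemma; it enters only later, in Lemma~\ref{lemcinB}.
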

\begin{proof}
   First observe that it is enough to find $b\in B$ such that for
  some $a_1,\dots, a_{|A|K}$, the $|A|K$-ary subpower
  $S$ of $\algA$ generated by tuples of the form
  \[
  \begin{matrix}
    (a_1,&b,&\dots,&b,&b)\\
    (b,&a_2,&\dots,&b,&b)\\
       &&\ddots\\
    (b,&b,&\dots,&a_{|A|K-1},&b)\\
    (b,&b,&\dots,&b, &a_{|A|K})\\
\end{matrix}
\]
is $B$-essential. Indeed, by the Dirichlet principle, 
there has to be some $a\in A$ that appears at least
  $K$ times on the main diagonal of the above matrix. Assume without loss of
  generality that this $a$ appears in the first $K$ columns of the
  diagonal. We now take the $K$-ary relation $T \leq \algA^K$ generated by the first $K$ elements of
  the first $K$ rows of the matrix of generators of $S$, that is, $T$ is generated by
  \[
  \begin{matrix}
    (a,&b,&\dots,&b,&b)\\
    (b,&a,&\dots,&b,&b)\\
       &&\ddots\\
    (b,&b,&\dots,&a,&b)\\
    (b,&b,&\dots,&b, &a)
\end{matrix}
\]
  As in the proof of Lemma~\ref{lemRamsey}, $T$ turns out to be $B$-essential.

 The idea for the remainder of the proof is to force $b_1$, $b_2$ from Lemma~\ref{lemRamsey} to be equal.
  To this end, we call a pair $(b_1,b_2)\in B^2$ \emph{good} if for
 $N=|\langle b_1,b_2\rangle|(|A|K+1)$ there exist $a_1,\dots,a_N\in A$ such
 that the $N$-ary subpower $R$ of $\algA$ generated
  by the tuples
  \[
  \begin{matrix}
    (a_1,&b_2,&b_2,&b_2,&\dots,&b_2,&b_2),&\\
    (b_1,&a_2,&b_2,&b_2,&\dots,&b_2,&b_2),&\\
    (b_1,&b_1,&a_3,&b_2,&\dots,&b_2,&b_2),&\\
		       &&&&\ddots&\\
    (b_1,&b_1,&b_1,&b_1,&\dots,&a_{N-1},&b_2),&\\
  (b_1,&b_1,&b_1,&b_1,&\dots,&b_1,&a_N)
  \end{matrix}
\]
  is $B$-essential. Since $\algA$ has $B$-essential relations of all arities,  Lemma~\ref{lemRamsey} gives us a pair $(b_1,b_2)\in B^2$
  that works for some $N\geq |\langle b_1,b_2\rangle|(|A|K+1)$. By fixing 
  coordinates, we get that $(b_1,b_2)$ is a good pair.
  We now take  a good pair $(b_1,b_2)$ for which $n=|\langle b_1,b_2\rangle|$ is
  minimal. If $n=1$, then $b_1=b_2$ and we are done.

In the rest of the proof, we show that if $n>1$, then there exists a good pair
$(b_1,b_2)$ such that $|\langle b_1,b_2\rangle|<n$. 

Consider the digraph $\relstr{G}=(B,E)$ with the vertex set $B$, where we let $(i,j)\in E$ if
and only if for every choice of $k$ from 0 to $|A|K-1$ and $\ell$ from 0 to $(n-1)(|A|K+1)-1$,
we have
\[  
  (\underbrace{b_1,\dots,b_1}_{k},\alpha,\underbrace{i,i,\dots,i}_{|A|K-1-k},\beta,\underbrace{j,j,\dots\dots\dots\dots,j}_{(n-1)(|A|K+1)-1-\ell},\alpha,\underbrace{b_2,\dots,b_2}_{\ell})\in
  R.
\]
Since $R$ contains the tuples $(b_1,\dots,b_1,a_r,b_2,\dots,b_2)$, the digraph
$\relstr{G}$ has loops around $b_1$ and $b_2$.  
We claim
that there exists a directed path from $b_1$ to $b_2$ in $\relstr{G}$. Why is that the
case? Define a binary relation $F$ so that $(i,j)\in F$ if
and only if 
\[  
  (\underbrace{b_1,\dots,b_1}_{k},\alpha,\underbrace{i,i,\dots,i}_{|A|K-1-k},\alpha,\underbrace{j,j,\dots\dots\dots\dots,j}_{(n-1)(|A|K+1)-1-\ell},\alpha,\underbrace{b_2,\dots,b_2}_{\ell})\in
  R,
\]
where as before $k$ ranges from 0 to $|A|K-1$ while $\ell$ ranges from 0 to $(n-1)(|A|K+1)-1$.
(The only difference between $E$ and $F$ is that we
have replaced $\beta$ by $\alpha$ on the position $|A|K+1$ of the long tuple.) Both $E$ and $F$ are clearly subuniverses of $\algA^2$.

Since $B\Jabsorbs \algA$, Lemma~\ref{lemAbsPP} tells us that $E\Jabsorbs \alg{F}$.

Since $(b_1,\dots,b_1,a_{|A|K+1},b_2,\dots,b_2)\in R,$ we have $(b_1,b_2)\in F$. 
By Lemma~\ref{lemDirectedConnectivity}, there is a directed path from $b_1$ to $b_2$ in
$\relstr{G}$. Denote the length of the shortest such path
by $m$. Since $b_1\neq b_2$, we have $m>0$. 

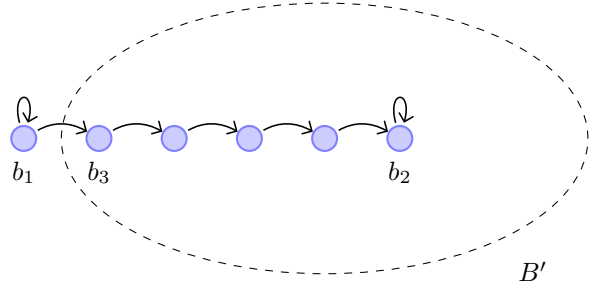
\begin{figure}
  \begin{center}
    \begin{tikzpicture}
      \node[vertex,label=below:$b_1$](b1){};
      \foreach \i/\j/\lab in {1/3/b_3,3/4/,4/5/,5/6/,6/2/b_2} {
	\node[vertex, label=below:$\lab$, right of=b\i] (b\j) {};
        \draw [diedge, bend left] 
	(b\i) edge (b\j);
      }
      \draw [diedge, loop above]
        (b1) edge (b1)
	(b2) edge (b2);
      \draw [dashed] (b6) ellipse (3.5cm and 1.8cm);
      \node [below right of=b2, node distance=2.5cm] {$B'$};
  \end{tikzpicture}
  \end{center}
  \caption{Obtaining $b_3$.}
  \label{figPath}
\end{figure}

Let $b_3$
be the second vertex on the shortest directed path from $b_1$ to $b_2$ in $\relstr{G}$
(see Figure~\ref{figPath}). Now consider the set $B'$
of vertices of $\relstr{G}$ that can
be reached from $b_2$ by going backwards $m-1$ times. Since $E$ is a subpower of $\alg{B}$, $B'$ is a subuniverse of $\alg{B}$. Moreover, $B'$ contains $b_2$ 
(because of the loop $(b_2,b_2)\in E$), as 
well as $b_3$, and $B'$ does not contain $b_1$ (else our path would not be
minimal). We therefore have $|\langle b_2,b_3\rangle|\leq |B'|<n$. We now show
that at least one of $(b_1,b_1)$ or $(b_3,b_2)$ is a good pair.

To that end, consider the relation 
\[
  S=\{(s_1,\dots,s_{|A|K}) \colon (s_1,\dots,s_{|A|K},\beta,\beta,\dots,\beta)\in
  R\}.
\]
It is straightforward to show that $S$ is a subpower of $\algA$ and that $S$ is $B$-essential of arity $|A|K$.
If we can find 
$(b_1,\dots,b_1,\alpha,b_1,\dots,b_1)\in S$
for $\alpha$ in every position, 
then $(b_1,b_1)$ is good and we are done. 

Assume therefore, that there exists a $k$ such that
\[
  (\underbrace{b_1,\dots,b_1}_{k},\alpha,\underbrace{b_1,\dots,b_1}_{|A|K-k-1})\not\in
S.
\]
Going back to the definition of $S$, we see that this is only possible if for
this value of $k$ we have
\[
  (\underbrace{b_1,\dots,b_1}_{k},\alpha,\underbrace{b_1,\dots,b_1}_{|A|K-k-1},\underbrace{\beta,
  \dots\dots\dots,\beta}_{(n-1)(|A|K+1)+1})\not\in
  R.
\]
Consider the relation
\[
  U=\{(u_1,\dots,u_{(n-1)(|A|K+1)}) \colon (\underbrace{b_1,\dots,b_1}_{k},\alpha,\underbrace{b_1,\dots,b_1}_{|A|K-k-1},\beta,u_1,\dots,u_{(n-1)(|A|K+1)})\in
  R\}.
\]
This is an $(n-1)(|A|K+1)$-ary subpower of $\algA$ that is disjoint from
$B^{(n-1)(|A|K+1)}$. Since $(b_1,b_3)$ is an edge in $\relstr{G}$, we have
\[  
 (\underbrace{b_1,\dots,b_1}_{k},\alpha,\underbrace{b_1,\dots,b_1}_{|A|K-k-1},\beta,\underbrace{b_3,\dots\dots\dots\dots,b_3}_{(n-1)(|A|K+1)-\ell-1},\alpha,\underbrace{b_2,\dots,b_2}_{\ell})\in
 R
\]
for every $\ell$ from 0 to $(n-1)(|A|K+1)-1$. Therefore, $U$ contains for
every applicable $\ell$ a tuple of the form
\[
  (\underbrace{b_3,\dots\dots\dots\dots,b_3}_{(n-1)(|A|K+1)-\ell-1},\alpha,\underbrace{b_2,\dots,b_2}_{\ell}).
\]
We see that $U$ is a $B$-essential relation of arity
$(n-1)(|A|K+1)$ and $|\langle b_2,b_3\rangle|\leq n-1$. By fixing some
coordinates of $U$ to $b_3$ to decrease the arity of $U$, we get a 
$(|\langle b_2,b_3\rangle|(|A|K+1))$-ary $B$-essential subpower $U'$ of $\algA$ that contains for each
$\ell$ a tuple of the form
\[
  (b_3,\dots\dots\dots,b_3,\alpha,\underbrace{b_2,\dots,b_2}_{\ell}).
\]
We can thus replace the pair $(b_1,b_2)$ by $(b_3,b_2)$ as we needed.
\end{proof}

Since there are only finitely many choices of $a,b$ and infinitely many choices
of $K$, we can apply Dirichlet's principle to make 
Lemma~\ref{lemSymmetric} stronger:

\begin{corollary}  \label{corSymmetric}
  There exist $a\in A,b\in B$ such that, for every $K\in\en$, the subuniverse of $\algA^K$ generated by
\[
  \begin{matrix}
    (a,&b,&\dots,&b,&b)\\
    (b,&a,&\dots,&b,&b)\\
       &&\ddots\\
    (b,&b,&\dots,&a,&b)\\
    (b,&b,&\dots,&b, &a)\\
\end{matrix}
\]
is $B$-essential.
\end{corollary}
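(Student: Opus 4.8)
The plan is to combine Lemma~\ref{lemSymmetric} with the Dirichlet principle, the extra ingredient being a monotonicity observation: for a \emph{fixed} pair $(a,b)\in A\times B$, if the subuniverse of $\algA^K$ generated by the $K\times K$ matrix with $a$ on the diagonal and $b$ elsewhere is $B$-essential, then so is the subuniverse of $\algA^{K'}$ generated by the analogous $K'\times K'$ matrix, for every $K'\le K$. Once this is known, infinitely many values of $K$ using the same pair $(a,b)$ force that pair to work for all $K$.

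First I would fix notation: write $T_K(a,b)\leq\algA^K$ for the subuniverse generated by the $K$ tuples in question. Since the $i$-th generator has $a$ in coordinate $i$ and $b$ elsewhere, it already witnesses $\pi_{\widehat i}(T_K(a,b))\cap B^{K-1}\neq\emptyset$; thus this half of $B$-essentiality holds for \emph{every} $(a,b)$ and every $K$, and ``$(a,b)$ works for $K$'' reduces to the single condition $T_K(a,b)\cap B^K=\emptyset$.

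Next I would establish the monotonicity. An arbitrary element of $T_K(a,b)$ can be written as $s(g_1,\dots,g_K)$ for a $K$-ary term $s$, where $g_1,\dots,g_K$ are the generators; evaluated at coordinate $j$ this equals $s(b,\dots,b,a,b,\dots,b)$ with $a$ in position $j$. Hence $T_K(a,b)\cap B^K\neq\emptyset$ exactly when some $K$-ary term $s$ satisfies $s(b,\dots,b,a,b,\dots,b)\in B$ for $a$ placed in each of the $K$ positions. If this holds for some $K'<K$ via a $K'$-ary term $s$, then $s'(x_1,\dots,x_K)=s(x_1,\dots,x_{K'})$ works for $K$: for $a$ in one of the first $K'$ positions this is immediate, and for a position $j>K'$ we get $s'(b,\dots,b,a,b,\dots,b)=s(b,\dots,b)=b\in B$ by idempotence. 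Taking contrapositives, $T_K(a,b)\cap B^K=\emptyset$ implies $T_{K'}(a,b)\cap B^{K'}=\emptyset$ for all $K'\le K$, which together with the automatic half above gives the monotonicity claim.

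Finally, Lemma~\ref{lemSymmetric} yields for each $K\in\en$ a pair $(a_K,b_K)\in A\times B$ that works for $K$. As $A\times B$ is finite, by the Dirichlet principle some fixed pair $(a,b)$ equals $(a_K,b_K)$ for infinitely many $K$, i.e.\ $(a,b)$ works for arbitrarily large $K$. For an arbitrary $K\in\en$ pick such a $K'\ge K$; the monotonicity observation then shows $(a,b)$ works for $K$ too. Hence $(a,b)$ works for every $K$, as required. There is essentially no serious obstacle here; the only point that needs care is the monotonicity step, specifically the fact that the non-trivial condition $T\cap B^{(\cdot)}=\emptyset$ must be pushed \emph{downward} in the arity and that this downward step relies on idempotence to neutralize the padded coordinates.
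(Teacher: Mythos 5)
Your proof is correct and takes essentially the same route as the paper, which invokes Dirichlet's principle with the (implicit) downward monotonicity that you make explicit via the term characterization of $T_K(a,b)\cap B^K\neq\emptyset$ and the padding-by-idempotence argument.
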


For the remainder of the proof, we  fix a pair of
elements $a,b\in A$ from Corollary~\ref{corSymmetric}. It is an easy exercise to show that the (countably) infinitary subpower of $\algA$ generated by
\[
(a,b,b,b,\dots), (b,a,b,b,\dots), \dots
\] 
is $B$-essential.  We denote this
relation by $R_\infty$.

We say that a relation $R$ is \emph{symmetric}, if it is invariant under all 
permutations of coordinates. Given a $k$-ary relation $T\leq \algA^k$, 
we construct an infinitary subpower $S_T$ of $\algA$ by fixing the first $k$
coordinates of $R_\infty$ to members of $T$:
\[
  S_T=\{(s_1,s_2,\dots) \colon \exists (t_1,\dots,t_k)\in T,\,
  (t_1,t_2,\dots,t_k,s_1,s_2,\dots)\in R_\infty\}. 
\]
For $k=0$, we define $S_{A^0}=R_\infty$. 

From the choice of generators and the idempotency of $\algA$ we obtain the following observation. 

\begin{lemma}\label{lembbbbb}
  For every choice of $T$, $S_T$ is symmetric and only finitely many coordinates of any element of 
  $S_T$ are different from $b$.
\end{lemma}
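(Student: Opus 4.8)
The plan is to prove both assertions first for $R_\infty$ and then transfer them to $S_T$ through the existential definition; the case $k=0$, where $S_{A^0}=R_\infty$, is then immediate.

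\emph{Symmetry.} I would first note that $R_\infty$ itself is symmetric. Its generators are the tuples $g_i$ ($i\in\en$) having $a$ in coordinate $i$ and $b$ in all other coordinates, and any permutation $\sigma$ of $\en$ sends $g_i$ to $g_{\sigma(i)}$, hence permutes the generating set among itself. A coordinate permutation is an automorphism of $\algA^{\en}$, and automorphisms commute with generation, so $\algA^{\en}$'s subpower generated by a setwise-fixed set is again fixed; thus $R_\infty$ is symmetric. For $S_T$: given $(s_1,s_2,\dots)\in S_T$ witnessed by $(t_1,\dots,t_k)\in T$ with $(t_1,\dots,t_k,s_1,s_2,\dots)\in R_\infty$, and given a permutation $\tau$ of $\{k+1,k+2,\dots\}$, extend $\tau$ by the identity on the first $k$ coordinates; applying this automorphism to $R_\infty$ shows the $\tau$-permuted tail still lies in $S_T$.

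\emph{Finiteness.} I would introduce
\[
  P=\{r\in A^{\en} \colon r_j\neq b \text{ for at most finitely many } j\in\en\}
\]
and check that $P$ is a subuniverse of $\algA^{\en}$: if $f$ is an $\ell$-ary basic operation and $r^{(1)},\dots,r^{(\ell)}\in P$, then outside the finite union of their supports every $r^{(m)}$ equals $b$, so idempotency gives $f(r^{(1)},\dots,r^{(\ell)})_j=f(b,\dots,b)=b$ at every such $j$; hence $f(r^{(1)},\dots,r^{(\ell)})\in P$. Each generator $g_i$ clearly lies in $P$, so $R_\infty\subseteq P$. In particular any witnessing tuple $(t_1,\dots,t_k,s_1,s_2,\dots)\in R_\infty$ differs from $b$ in only finitely many coordinates, so its tail $(s_1,s_2,\dots)$ does too; this is precisely the claim for $S_T$.

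I do not expect a real obstacle: the lemma is bookkeeping built on the two observations that coordinate permutations of $\algA^{\en}$ are automorphisms (so they commute with generation) and that idempotency forces the ``$b$-support'' of any term value to sit inside the union of the supports of the inputs. The only mild subtlety is to phrase the finiteness argument as ``$P$ is a subuniverse'' rather than unwinding individual term evaluations one by one.
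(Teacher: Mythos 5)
Your proof is correct and is essentially the argument the paper has in mind (the paper leaves the proof to the reader, saying only that it follows ``from the choice of generators and the idempotency of $\algA$''). You correctly identify the two ingredients: the generating set of $R_\infty$ is permuted by any coordinate permutation of $\en$, which gives symmetry of $R_\infty$ and hence of $S_T$ via permutations fixing the first $k$ coordinates, and the set of tuples with finite $b$-support is a subuniverse of $\algA^{\en}$ by idempotency, which contains the generators and hence $R_\infty$.
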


By fixing  all but the first two coordinates of $S_T$ to elements of $B$ we
obtain the binary relation
\[
  S_{T;1,2}=\{(x,y) \colon (x,y,\beta,\beta,\beta,\dots)\in S_T\} \leq \algA^2.
\]
This (symmetric) binary relation defines a graph with the vertex set $A$. We denote the
neighborhood of the set $B$ in this graph  by $B^{+S_{T;1,2}}$:
\[
  B^{+S_{T;1,2}}=\{c\in A \colon (\beta,c)\in S_{T;1,2}\} = \{c \in A \colon (c,\beta) \in S_{T;1,2}\} \leq \algA.
\]


For some choices of $k$ and $T$ (such as $k=0$), the relation $S_T$ is
$B$-essential. We choose a subpower $T$ of $\algA$ so that $S_T$ is
$B$-essential and the size of the set $B^{+S_{T;1,2}}$ is maximal. We put
$C=B^{+S_{T;1,2}}$ and show that $(C,A)$ is a $B$-blocker. This will contradict the
choice of $B \leq \algA$ and finish the proof. 

The subuniverse $C$ is nonempty and disjoint from $B$ as $S_T$ is
$B$-essential. Except for the condition (\ref{itmCondBBlocker}), it is easy to 
verify that $(C,A)$ satisfies all conditions of
Definition~\ref{def-blocker}. We use a chain of lemmas to show that $A^n\setminus (A\setminus C)^n$ is
a subpower of $\algA$ for all choices of $n$.

\begin{lemma}\label{lemcinB}
    If $c\in C$, then $(b,c),(c,b)\in S_{T;1,2}$ and $\langle b,c \rangle = A$ (where $\langle b,c \rangle$ denotes the subuniverse of $A$ generated by $b$ and $c$).
\end{lemma}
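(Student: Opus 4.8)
The plan is to extract the claimed edges from the very generators of $R_\infty$ and then to leverage the minimality in the choice of $T$ (maximizing $|B^{+S_{T;1,2}}|$) to force $\langle b,c\rangle$ to be all of $A$. First I would prove $(b,c),(c,b)\in S_{T;1,2}$. By definition $c\in C=B^{+S_{T;1,2}}$ means $(\beta,c)\in S_{T;1,2}$, i.e.\ $(b',c,\beta,\beta,\dots)\in S_T$ for some $b'\in B$. Since $S_T$ is symmetric (Lemma~\ref{lembbbbb}) and by Lemma~\ref{lembbbbb} all but finitely many coordinates of any tuple of $S_T$ equal $b$, I would use idempotency together with the shape of the generators to replace the leading $b'$ by $b$: concretely, apply a suitable idempotent term (or just observe that $b\in B$ and use the defining property of $B^{+S_{T;1,2}}$, which is stated to equal both $\{c : (\beta,c)\in S_{T;1,2}\}$ and $\{c : (c,\beta)\in S_{T;1,2}\}$) to get $(b,c)\in S_{T;1,2}$, and then symmetry of $S_{T;1,2}$ gives $(c,b)\in S_{T;1,2}$ as well.

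Next I would show $\langle b,c\rangle=A$. Consider the subuniverse $T' = \langle b,c\rangle \le \algA$, and form the relation $S_{T''}$ where $T''$ is, say, the unary relation $T'$ (fixing one extra coordinate of $R_\infty$ into $T'$), or more directly compare with $S_T$. The key point is that $S_{T'}$ (or an appropriate $S$ built from $T'$) is still $B$-essential: it contains $S_T$-style witnesses because $b,c\in T'$, and it is disjoint from $B^{\en}$ because enlarging the fixed coordinates only within $\langle b,c\rangle$ cannot create an all-$B$ tuple — here one uses that $R_\infty$ itself is $B$-essential and that fixing coordinates to a subuniverse keeps the ``no all-$B$ tuple'' property, exactly as in the downset argument from Proposition~\ref{propEssentialRel}. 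Then $B^{+S_{T';1,2}} \supseteq B^{+S_{T;1,2}} = C$ and in fact, because now the coordinates may range over the larger set $\langle b,c\rangle \ni c$, one shows $B^{+S_{T';1,2}}$ is at least as large; by maximality of $|B^{+S_{T;1,2}}|$ in the choice of $T$, nothing new can be gained, which forces the generated subuniverse to collapse — more precisely it forces $\langle b,c\rangle$ to already contain everything relevant, and combined with the fact that $C\ne\emptyset$ and $C\cap B=\emptyset$ and the structure of $\algA$, one concludes $\langle b,c\rangle = A$. (If instead $\langle b,c\rangle$ were a proper subuniverse $D'\lneq A$, one would restrict the whole configuration to $D'$, obtaining a smaller algebra with a subuniverse $B\cap D'$ that still Jónsson-absorbs, has no blocker, and fails to absorb — contradicting the minimality of $|A|$; this is the alternative route if the maximality argument does not directly close.)

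I expect the main obstacle to be the second part: cleanly arguing that $\langle b,c\rangle=A$. The maximality of $|B^{+S_{T;1,2}}|$ must be exploited in exactly the right way, and one has to be careful that the relation built from $\langle b,c\rangle$ really is $B$-essential (both the disjointness from $B^{\en}$ and the projection condition) — this is where the downset property of arities of $B$-essential subpowers, idempotency, and the special symmetric form of $R_\infty$ all come together. The first part, producing the two edges, should be a short computation from the definitions and Lemma~\ref{lembbbbb}.
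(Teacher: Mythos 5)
Your first part (producing the two edges) is on the right track: symmetry of $S_T$ together with the fact that only finitely many coordinates of any tuple in $S_T$ differ from $b$ (Lemma~\ref{lembbbbb}) lets you swap the leading $b'$ with some later coordinate equal to $b$, and symmetry of $S_{T;1,2}$ then gives both orientations. That is essentially what the paper does (it does not use idempotency here, just symmetry and Lemma~\ref{lembbbbb}).

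The second part has a genuine gap, and it is the heart of the lemma. Your ``maximality of $|C|$'' route is hand-waving: you never actually produce a $B$-essential relation whose $B^{+S_{\,\cdot\,;1,2}}$ is strictly larger than $C$, nor a mechanism by which maximality would force $\langle b,c\rangle$ to be all of $A$, and as written it does not close. Your alternative route in the parenthetical is closer — pass to $\algA'$ with universe $A'=\langle b,c\rangle$ and $B'=B\cap A'$, note that $B'\Jabsorbs\algA'$ and $\algA'$ has no $B'$-blocker, and invoke minimality of $|A|$ — but you then simply assert that $B'$ ``fails to absorb'' $\algA'$, which is exactly the thing that needs to be proved. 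The paper's argument runs in the opposite direction: minimality forces $B'\absorbs\algA'$, say by a $k$-ary term $t$. Now one must exhibit something inside $\algA'$ that this absorption term cannot handle. That object is a symmetric $B$-essential subuniverse $Q$ of $\algA^{\en}$, obtained by fixing some coordinates of $S_T$ to $B$, which contains every permutation of the tuple $(c,b,b,b,\dots)$. All entries of these tuples lie in $A'=\langle b,c\rangle$, so applying $t$ to $k$ of them (with the exceptional entry $c$ in a different position each time), as in the proof of Proposition~\ref{propEssentialRel}, produces a tuple of $Q$ lying in $(B')^{\en}\subseteq B^{\en}$, contradicting $B$-essentiality of $Q$. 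Without constructing $Q$ and running this argument, the reduction to $\algA'$ does not yield a contradiction; that is the missing idea in your plan.
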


\begin{proof}
From Lemma~\ref{lembbbbb} and the choice of $C$ it follows that both $(b,c)$ and $(c,b)$ are in $S_{T;1,2}$. 

To prove the second claim we show that $\langle b,c\rangle \subsetneq A$ leads
to a contradiction. Let $\algA'$ be the subalgebra of $\algA$ with universe $A' = \langle b,c \rangle$ and
$B' = B \cap \langle b,c \rangle$. Clearly, $B'$ is a J\'onsson absorbing
subuniverse of $\algA'$. We also know that $\algA'$ has no $B'$-blockers, since
every $B'$-blocker in $\algA'$ is also a $B$-blocker in $\algA$.

Since $\algA'$ is smaller than $\algA$ and we assume $\algA$ to be a minimial counterexample
to Theorem~\ref{thmAbsChar}, $B'$ absorbs $\algA'$. Call the absorbing term $t$ and let $k$ be its arity.
Since $c \in
B^{+S_{T;1,2}}$, $S_T$ contains a tuple of the form
$(c,\beta,\beta,\dots)$ which we can write as $(c,\beta,\dots,\beta,b,b,\dots)$
by Lemma~\ref{lembbbbb}. Therefore, by fixing some coordinates of $S_T$ to $B$,
we obtain a symmetric $B$-essential subuniverse $Q$ of $\algA^{\en}$ containing
all permutations of the tuple $(c,b,b,\dots)$. If we now apply $t$ to $k$ of these
tuples like in the proof of Proposition~\ref{propEssentialRel}, we get that
$Q\cap (B')^\en\neq \emptyset$, a contradiction with $Q$ being $B$-essential.
The only way to avoid this contradiction is if $\langle b,c\rangle =A$.
\end{proof}

Next we verify that $A^n\setminus (A\setminus C)^n\leq \algA$ for $n=2$.

  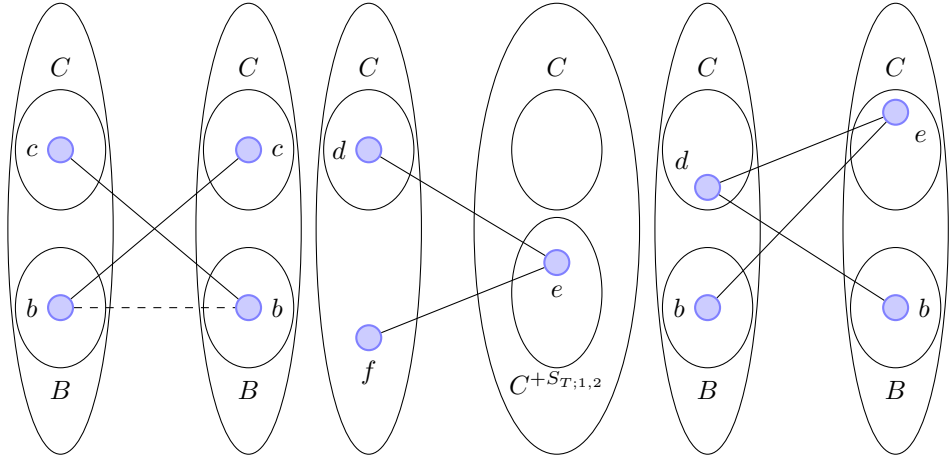
\begin{figure}
  \begin{center}
    \begin{tikzpicture}[node distance=2.1cm]
	\node[vertex, label=left:$c$] (cl) {};
	\node[vertex, label=right:$c$, right of=cl, node distance=2.5cm] (cr) {};
	\node[vertex, label=left:$b$, below of=cl] (bl) {};
	\node[vertex, label=right:$b$, right of=bl, node distance=2.5cm] (br) {};
	\node[radius=0pt, above of=cl] (el) {};
	\node[ right of=el, node distance=3cm] (er) {};
	\draw
	(cl) edge (br)
	(bl) edge (cr);
	\draw[dashed]
	(bl) edge (br);
	\draw 
	(bl) ellipse (6mm and 8mm)
	(br) ellipse (6mm and 8mm)
	 (cl) ellipse (6mm and 8mm) 
	(cr) ellipse (6mm and 8mm) 
	($(cl)!0.5!(bl)$) ellipse (7mm and 30mm)
	($(cr)!0.5!(br)$) ellipse (7mm and 30mm);
	\node [below of=br, node distance=1.1cm] {$B$};
	\node [below of=bl, node distance=1.1cm] {$B$};
	\node [above of=cr, node distance=1.1cm] {$C$};
	\node [above of=cl, node distance=1.1cm] {$C$};
  \end{tikzpicture}
  \hfil
  \begin{tikzpicture}
    \node (cl) {};
    \node[vertex, label=left:$d$] at (cl) (dl) {};
    \node[right of=cl, node distance=2.5cm] (cr) {};
    \node[vertex,label=below :$e$, below of=cr,node distance=1.5cm] (er) {};
    \node[vertex,label=below :$f$, below of=cl, node distance=2.5cm] (fl) {};
    \node[below of=cl, node distance=2.1cm] (bl) {};
    \node[right of=bl, node distance=2.5cm] (br) {};
    \draw
    (dl) edge (er)
    (fl) edge (er);
    \draw 
    ($(er)+(0,-4mm)$) ellipse (6mm and 10mm)
	(cl) ellipse (6mm and 8mm) 
        (cr) ellipse (6mm and 8mm) 
	($(cl)!0.5!(bl)$) ellipse (7mm and 30mm)
	($(cr)!0.5!(br)$) ellipse (11mm and 30mm);
	\node [below of=er, node distance=1.6cm] {$C^{+S_{T;1,2}}$};
	\node [above of=cr, node distance=1.1cm] {$C$};
	\node [above of=cl, node distance=1.1cm] {$C$};
\end{tikzpicture}
    \hfil
    \begin{tikzpicture}[node distance=2.1cm]
	\node (cl) {};
	\node[vertex, label=above left:$d$, below of=cl, node distance=5mm] (dl) {};
        \node[vertex, label=below right:$e$] at ($(cl)+(2.5cm,5mm)$) (er) {};
        \node at ($(cl)+(2.5cm,0mm)$) (cr) {};
	\node[vertex, label=left:$b$, below of=cl] (bl) {};
	\node[vertex, label=right:$b$, right of=bl, node distance=2.5cm] (br) {};
	\node[radius=0pt, above of=cl] (el) {};
	\draw
	(dl) edge (br)
	(bl) edge (er);
	\draw
	(dl) edge (er);
	\draw 
	(bl) ellipse (6mm and 8mm)
	(br) ellipse (6mm and 8mm)
	 (cl) ellipse (6mm and 8mm) 
	(cr) ellipse (6mm and 8mm) 
	($(cl)!0.5!(bl)$) ellipse (7mm and 30mm)
	($(cr)!0.5!(br)$) ellipse (7mm and 30mm);
	\node [below of=br, node distance=1.1cm] {$B$};
	\node [below of=bl, node distance=1.1cm] {$B$};
	\node [above of=cr, node distance=1.1cm] {$C$};
	\node [above of=cl, node distance=1.1cm] {$C$};
  \end{tikzpicture}

  \end{center}
  \caption{Three stages of the proof that $S_{T;1,2}=A^2\setminus(A\setminus
    C)^2$. Solid lines denote pairs in $S_{T;1,2}$ while the dashed
  line stands for the relation $S_{T;1,2}^\star$.}\label{figS}
\end{figure}

\begin{lemma}\label{lemBinaryblocker}
  If we choose $T$ and $C$ as above, then $S_{T;1,2}=A^2\setminus(A\setminus C)^2$.
\end{lemma}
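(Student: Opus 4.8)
We must show that, with $T$ and $C$ chosen so that $S_T$ is $B$-essential and $|B^{+S_{T;1,2}}|$ is maximal among such choices, the binary relation $S_{T;1,2}$ equals exactly $A^2 \setminus (A \setminus C)^2$.

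**Plan.** One inclusion is immediate from the way $C$ and $S_{T;1,2}$ are set up. Since $S_T$ is $B$-essential, the pair $(b,b)$ (and by symmetry any pair in $B^2$) is not in $S_{T;1,2}$; more generally I claim $S_{T;1,2} \cap (A\setminus C)^2 = \emptyset$, i.e. $S_{T;1,2} \subseteq A^2\setminus(A\setminus C)^2$. Indeed if $(d,e)\in S_{T;1,2}$ with $d \notin C$, then fixing the first coordinate of $S_T$ to $d$ and all but the second coordinate to $B$ would produce a tuple witnessing that $e$ is in the $B$-neighborhood in that fixed relation — but one has to be careful: the cleaner route is to use that $C = B^{+S_{T;1,2}}$ is the $B$-neighborhood and to argue that if $(d,e)$ with $d,e \notin C$ were present, then $S_T$ restricted by fixing those coordinates would stay $B$-essential (using Lemma~\ref{lembbbbb} to fill the remaining coordinates with $b$, which lies in $B$) while its neighborhood of $B$ would strictly grow — contradicting maximality of $|C|$. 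So the content of the lemma is the reverse inclusion $A^2\setminus(A\setminus C)^2 \subseteq S_{T;1,2}$, which says: whenever at least one of $x,y$ lies in $C$, then $(x,y)\in S_{T;1,2}$.

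**Main steps for the reverse inclusion.** By symmetry of $S_{T;1,2}$ (Lemma~\ref{lembbbbb}) it suffices to handle the case where the first coordinate is some $c\in C$, and show $(c,y)\in S_{T;1,2}$ for every $y\in A$. We already know from Lemma~\ref{lemcinB} that $(c,b)\in S_{T;1,2}$ and that $\langle b,c\rangle = A$. The strategy, as the three-stage picture in Figure~\ref{figS} suggests, is: (1) observe $(c,c)\in S_{T;1,2}$ (fix coordinate~$1$ of $S_T$ to $c$, all but coordinate~$2$ to $B$; since $c\in C=B^{+S_{T;1,2}}$ there is a tuple $(c,\beta,\beta,\dots)\in S_T$, and by Lemma~\ref{lembbbbb} we may re-read it with $b$'s, then symmetry lets us put $c$ in the second coordinate as well — more precisely one uses that the resulting relation is $B$-essential and $c$ is in the neighborhood). (2) Consider the relation $S_{T;1,2}^\star$ obtained by fixing two coordinates of $S_T$ to $B$ and projecting onto the remaining two; this is a subpower of $\algA^2$ containing $(b,b)$, and one shows $S_{T;1,2}\Jabsorbs S_{T;1,2}^\star$ via Lemma~\ref{lemAbsPP} using $B\Jabsorbs\algA$. (3) Now use a connectivity argument à la Lemma~\ref{lemFence} or Lemma~\ref{lemDirectedConnectivity}: $(c,c)$ and $(b,b)$ sit on the diagonal of $S_{T;1,2}$, there is a path in $S_{T;1,2}^\star$ from $b$ to any $y\in A$ because $\langle b,c\rangle=A$ and $S_{T;1,2}^\star$ is rich enough to connect everything, hence by the fence/path transfer lemma the analogous path exists in $S_{T;1,2}$, placing $(c,y)$ in $S_{T;1,2}$.

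**Where the difficulty lies.** The genuinely delicate point is step (3): setting up the right auxiliary relation $S_{T;1,2}^\star$ (with the right number of coordinates fixed to $B$ versus $A$) so that (a) $S_{T;1,2}$ J\'onsson-absorbs it, and (b) its digraph is connected in the strong sense required — i.e. that $\langle b,c\rangle = A$ actually forces a directed path (or even-length fence) from $b$ to every element of $A$ inside $S_{T;1,2}^\star$. One must be careful that the transfer lemma requires the endpoints of the fence to carry edges $(a,b),(a',b')\in E$ (Lemma~\ref{lemFence}) or loops $(a,a),(b,b)\in E$ (Lemma~\ref{lemDirectedConnectivity}); establishing those loops in $S_{T;1,2}$ is exactly what step (1), the claim $(c,c)\in S_{T;1,2}$, is for. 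Modulo getting these bookkeeping details right — how many coordinates to fix, and which connectivity lemma applies — the argument is a routine combination of the pp-definition absorption lemma and the fence-transfer lemma; I would expect no deep new idea beyond correctly orchestrating the ones already developed, but the indexing of the infinitary relation $S_T$ makes it error-prone.
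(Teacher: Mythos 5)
Your treatment of the inclusion $S_{T;1,2}\subseteq A^2\setminus(A\setminus C)^2$ is essentially right in spirit: if $(e,f)\in S_{T;1,2}$ with $e,f\notin C$, one sets $T'=T\times\{e\}$, checks that $S_{T'}$ remains $B$-essential (since $e\notin C$ means $(e,\beta,\beta,\dots)\notin S_T$), and observes that $B^{+S_{T';1,2}}\supseteq C\cup\{f\}$, contradicting maximality of $|C|$. That matches the paper.

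The other inclusion is where your plan breaks down, and the gap is structural rather than a matter of bookkeeping. You propose to (1) prove $(c,c)\in S_{T;1,2}$ directly, (2) establish $S_{T;1,2}\Jabsorbs S_{T;1,2}^\star$, and (3) transfer a path from $b$ to an arbitrary $y$ along this absorption. Step (1) is unjustified: from $(c,\beta,\beta,\dots)\in S_T$ and symmetry you get $(\beta,c,\beta,\dots)\in S_T$, but nothing forces $(c,c,\beta,\dots)\in S_T$; the argument you sketch does not produce the loop $(c,c)$. Step (3) is worse: there is no reason $S_{T;1,2}^\star$ should contain a fence (or directed path) from $b$ to every $y\in A$, and $\langle b,c\rangle=A$ is a statement about subuniverse generation, not about connectivity in the graph $S_{T;1,2}^\star$. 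The fence-transfer lemma cannot conjure paths that are not already present in the larger relation.

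What you are missing is the paper's two-stage decomposition, and in particular that the fence argument is used only to establish the much weaker claim that $C^2\cap S_{T;1,2}\neq\emptyset$, by yet another appeal to maximality. Concretely: assume for contradiction $C\cap C^{+S_{T;1,2}}=\emptyset$, set $T'=T\times C^{+S_{T;1,2}}$, and show $S_{T'}$ is $B$-essential with $B^{+S_{T';1,2}}\supsetneq C$. The proper inclusion is where $S_{T;1,2}^\star$ and Lemma~\ref{lemFence} come in: the even fence $c,b,b$ exists in $S_{T;1,2}^\star$ because $(b,b,c,\beta,\dots)\in S_T$ by symmetry, it transfers to an even fence from $c$ to $b$ in $S_{T;1,2}$, and since $b\notin C$ this fence must at some step leave $C$, yielding $d\in C$, $e\in C^{+S_{T;1,2}}$, $f\in A\setminus C$ with $(d,e),(f,e)\in S_{T;1,2}$; then $f\in B^{+S_{T';1,2}}\setminus C$. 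Once some $(d,e)\in C^2\cap S_{T;1,2}$ is secured, the rest is a short and purely algebraic step you do not use at all: $\{x\in A\colon(d,x)\in S_{T;1,2}\}$ is a subuniverse of $\algA$ (by idempotency) containing both $b$ and $e\in C$, hence by Lemma~\ref{lemcinB} equals $A$; symmetrically for $e$, and one bootstraps to $S_{T;1,2}\supseteq C\times A\cup A\times C$. Your connectivity-transfer step (3) is not a viable substitute for this subuniverse argument.
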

\begin{proof}
  We begin our proof by showing that $C^2\cap S_{T;1,2}\neq \emptyset$. Assume for a
  contradiction
  that the subuniverse $C^{+S_{T;1,2}}=\{e\in A \colon \exists c\in C,
  (e,c)\in S_{T;1,2}\}$ of $\algA$ is disjoint from $C$. 
	Let $T'=T\times C^{+S_{T;1,2}}$ and consider the corresponding relation
  $S_{T'}$. We claim that $S_{T'}$ is
  a $B$-essential subpower of $\algA$ and that $B^{+S_{T';1,2}}$ is strictly larger than
  $C$. This will contradict the choice of $T$ and $C$.

  Why is $S_{T'}$ a $B$-essential relation? Were there $(\beta,\beta,\dots)\in
  S_{T'}$, we would have
  $(c,\beta,\beta,\dots)\in S_T$ for some $c\in C^{+S_{T;1,2}}$. But then $c$
  would belong to $C$, a contradiction with $C\cap C^{+S_{T;1,2}}=\emptyset$. On the other hand, from $b \in C^{+S_{T;1,2}}$ it follows that  the projection of $S_{T'}$ onto all but one coordinates intersects $B^{\en}$. 

  It remains to show that $C\subsetneq B^{+S_{T';1,2}}$. First of all, if $g\in C$, then
  $(b,g,\beta,\dots,\beta)\in S_{T}$ by Lemma~\ref{lemcinB}. Using $b\in
  C^{+S_{T;1,2}}$ then gives us $g\in B^{+S_{T';1,2}}$, so we have $C\subseteq B^{+S_{T';1,2}}$. 

  To see that the inclusion is proper, we need to take a small detour. Choose
  and fix a $c\in C$. By Lemma~\ref{lemcinB}, we have $(c,b),(b,c)\in S_{T;1,2}$.
 Consider the relation
  $S_{T;1,2}^\star=\{(x,y) \colon (x,y,\alpha,\beta,\beta,\dots)\in S_T\} \leq \algA^2$. 
	Since $B\Jabsorbs \algA$, we have
  $S_{T;1,2}\Jabsorbs \alg{S}_{T;1,2}^\star$ by Lemma~\ref{lemAbsPP}. 
	The symmetry of $S_T$ gives us
  $(b,b,c,\beta,\beta,\dots)\in S_T$
  so $(b,b)\in S_{T;1,2}^\star$. The sequence $c,b,b$ is thus a fence
  of even length from $c$ to $b$ in $S_{T;1,2}^\star$ (see Figure~\ref{figS}
  left). Lemma~\ref{lemFence}
  then implies that there exists a fence of even length from $c$ to $b$ in $S_{T;1,2}$. 
  To reach $b$, this fence cannot use only edges in $C\times C^{+S_{T;1,2}}$, so
	there exist some $d\in C$, $e \in
	C^{+S_{T;1,2}}$ and $f\in A \setminus C$ such that $(d,e),(f,e)\in
	S_{T;1,2}$ (see Figure~\ref{figS} center). Since $S_T$ is symmetric, we
	have $(e,f,\beta,\beta,\dots)\in S_T$, from which it follows that
	$(f,\beta,\dots)\in S_{T'}$. We see that $f\in B^{+S_{T';1,2}}\setminus C$ witnesses
	$C\subsetneq B^{+S_{T';1,2}}$.  

  We have shown that $C\cap C^{+S_{T;1,2}}\neq \emptyset$, that is, there
  exist $d,e\in C$ such that $(d,e)\in S_{T;1,2}$ (see Figure~\ref{figS} right).  
	The set $\{x \in A \colon (d,x) \in S_{T;1,2}\}$ is a subuniverse of $\algA$ (from idempotency) which contains $b$ and $e$. 
	By the second part of Lemma~\ref{lemcinB}, this subuniverse is equal to $A$, in other words, $(d,x)\in S_{T;1,2}$ for every $x\in A$.
  Similarly, $(x,e)\in S_{T;1,2}$ for every $x\in A$. By repeating this argument, we get
   that the set of neighbors of any $f\in C$ is equal to $A$, 
  so $S_{T;1,2}\supseteq A\times C\cup C\times A=A^2\setminus(A\setminus C)^2$.

  It remains to verify that there is no edge $(e,f)\in S_{T;1,2}$ connecting
  vertices $e,f\in
  A\setminus C$.
  If we had such an edge, we would let $T'=T\times \{e\}$ and consider
  $S_{T'}$. Arguments similar to the above give us that $S_{T'}$ is a  $B$-essential relation and 
  $B^{+S_{T';1,2}} \supseteq C\cup \{f\}$, a contradiction with the maximality of $C$.
\end{proof}

To finish the proof of item (5) in Definition~\ref{def-blocker}, we need to generalize $S_{T;1,2}$ to higher arities. 
Denote by $S_{T;1,2,\dots,n}$ the relation
\[
  \{(s_1,\dots,s_n) \colon (s_1,\dots,s_n,\beta,\beta,\dots)\in S_T\} \leq \algA^n.
\]
\begin{lemma}\label{lemGeneralblocker}
  Let $S_T$ be
  chosen as above (ie. $S_T$ is $B$-essential and the size of
  $C=B^{+S_{T;1,2}}$ is maximal). Then
  $S_{T;1,2,\dots,n}=A^n\setminus(A\setminus C)^n$ for all $n \in\en$.
\end{lemma}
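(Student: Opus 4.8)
The plan is to prove the identity $S_{T;1,2,\dots,n}=A^n\setminus(A\setminus C)^n$ by induction on $n$, with the base case $n=2$ being exactly Lemma~\ref{lemBinaryblocker} (the case $n=1$ being the trivial statement that $S_{T;1}=A$, which holds because $S_T$ is $B$-essential and symmetric, hence its one-coordinate projection is all of $A$). Throughout, the easy inclusion $A^n\setminus(A\setminus C)^n \subseteq S_{T;1,2,\dots,n}$ should come almost for free: a tuple in $A^n\setminus(A\setminus C)^n$ has at least one coordinate in $A\setminus C \subseteq A\setminus B$ — wait, more carefully, it has at least one coordinate, say the $i$-th, lying in $C$; by Lemma~\ref{lemcinB} we have $\langle b,c\rangle = A$ for $c\in C$, and combining this with symmetry of $S_T$ and the fact that $S_T$ contains all permutations of tuples that are $b$ in all but finitely many coordinates, one shows that fixing the $i$-th coordinate to $c$ and using idempotency forces the remaining coordinates to range over all of $A$. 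So the real content, as in Lemma~\ref{lemBinaryblocker}, is the reverse inclusion $S_{T;1,2,\dots,n}\subseteq A^n\setminus(A\setminus C)^n$, i.e.\ that $S_T$ contains no tuple whose first $n$ coordinates all lie in $A\setminus C$ while the rest lie in $B$.

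For the inductive step, suppose toward a contradiction that some $(s_1,\dots,s_n)\in (A\setminus C)^n$ lies in $S_{T;1,2,\dots,n}$. The key manoeuvre is to fix one of these coordinates and reduce to a relation $S_{T'}$ for a larger $T'$, exactly in the spirit of the maximality argument in Lemma~\ref{lemBinaryblocker}. Concretely, I would set $T' = T \times \{s_n\}$ (identifying $S_{T';1,2,\dots}$ with the appropriate reindexing of $S_T$ after fixing one coordinate to $s_n\in A\setminus C \subseteq A\setminus B$), and first check that $S_{T'}$ is still $B$-essential: it is disjoint from $B^{\en}$ because $s_n\notin B$, and its one-coordinate projections still meet $B^{\en}$ because $s_n$ appears together with the all-$b$ tail in $S_T$ by symmetry and Lemma~\ref{lembbbbb}. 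By the maximality of $C=B^{+S_{T;1,2}}$ we must have $B^{+S_{T';1,2}}\subseteq C$; on the other hand one should check $C \subseteq B^{+S_{T';1,2}}$ (using $(b,g,\beta,\dots)\in S_T$ for $g\in C$ from Lemma~\ref{lemcinB} and $s_n$ paired with $b$'s), so $B^{+S_{T';1,2}}=C$ and $S_{T'}$ is a legitimate alternative choice of the same kind. Now $(s_1,\dots,s_{n-1})\in S_{T';1,2,\dots,n-1}$, and by the induction hypothesis applied to $S_{T'}$ this tuple must lie in $A^{n-1}\setminus(A\setminus C)^{n-1}$, i.e.\ some $s_i$ ($i<n$) lies in $C$ — contradicting $s_i\in A\setminus C$.

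The step I expect to be the main obstacle is verifying carefully that $S_{T'}$ for $T'=T\times\{s_n\}$ is \emph{both} $B$-essential \emph{and} satisfies $B^{+S_{T';1,2}}=C$ exactly — i.e.\ that fixing a coordinate to an element of $A\setminus C$ does not accidentally shrink the $B$-neighborhood below $C$ or destroy $B$-essentiality. The $B$-essentiality is delicate because one must confirm that \emph{every} one-coordinate projection of $S_{T'}$ still meets $B^{\en}$; this relies on the symmetry of $S_T$ (Lemma~\ref{lembbbbb}) so that $s_n$ can be paired with the all-$b$ tuple on the other coordinates regardless of which coordinate we project out. The inclusion $C\subseteq B^{+S_{T';1,2}}$ is the other subtle point: one needs $(b,g,s_n,\beta,\beta,\dots)\in S_T$ — equivalently, by symmetry, a tuple with entries $b$, $g\in C$, $s_n$, and $b$'s elsewhere — which should follow because $g\in C=B^{+S_{T;1,2}}$ already gives $(b,g,\beta,\dots)\in S_T$ and $s_n\in A$ can be slotted in using that $S_T$ is symmetric and closed under the operations (but one has to make sure $s_n$ really does appear somewhere in some tuple of $S_T$ alongside a $B$-element, which is where $s_n\in A\setminus C$ versus $s_n$ appearing at all needs a word — it appears because $S_{T;1,2,\dots,n}$ was assumed nonempty on $(s_1,\dots,s_n)$). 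Once these bookkeeping points are nailed down, the induction closes cleanly.
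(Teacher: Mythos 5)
Your treatment of the inclusion $S_{T;1,\dots,n}\subseteq A^n\setminus(A\setminus C)^n$ is sound but takes a genuinely different route from the paper. You set $T'=T\times\{s_n\}$, fixing a single coordinate, and argue that $S_{T'}$ is $B$-essential with $B^{+S_{T';1,2}}=C$ exactly (the inclusion $C\subseteq B^{+S_{T';1,2}}$ follows from $S_{T;1,2}=A^2\setminus(A\setminus C)^2$ and the symmetry of $S_T$; the reverse then comes from maximality of $|C|$), and then re-invoke the inductive hypothesis for the modified relation $S_{T'}$. This closes the induction, but only if the induction hypothesis is phrased uniformly over all admissible choices of $T$ with $|B^{+S_{T;1,2}}|$ maximal. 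The paper instead sets $T'=T\times\{(y_1,\dots,y_n)\}$, fixing all $n$ offending coordinates at once, verifies $B$-essentiality of $S_{T'}$ directly, and exhibits $y_{n+1}\in B^{+S_{T';1,2}}\setminus C$, contradicting the maximality of $|C|$ outright with no need to re-apply the IH to a modified relation. Both arguments work; the paper's is the tighter of the two. (A small slip on your end: for $n=1$ the lemma reads $S_{T;1}=A^1\setminus(A\setminus C)^1=C$, not $A$ --- indeed one checks $S_{T;1}=B^{+S_{T;1,2}}$ by symmetry --- and the base case actually used is $n=2$ from Lemma~\ref{lemBinaryblocker}.)

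The genuine gap is the inclusion you call ``easy'', namely $A^n\setminus(A\setminus C)^n\subseteq S_{T;1,\dots,n}$. Given a tuple with, say, first coordinate $c\in C$, you need the subuniverse $V=\{(v_2,\dots,v_n):(c,v_2,\dots,v_n)\in S_{T;1,\dots,n}\}$ of $\algA^{n-1}$ to be \emph{all} of $A^{n-1}$; citing $\langle b,c\rangle=A$ from Lemma~\ref{lemcinB} is a one-dimensional generation fact that does not by itself produce an $(n-1)$-dimensional one. The paper handles this by strengthening the induction hypothesis to also carry the claim $X_{n-1}=A^{n-1}$, where $X_m$ is the subuniverse of $\algA^m$ generated by all permutations of $(a,b,\dots,b)$ together with $(b,\dots,b)$. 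Proving $X_n=A^n$ in the step itself needs both halves of the IH: the description of $S_{T;1,\dots,n}$ to see that $X_n\supseteq A^n\setminus(A\setminus C)^n$, and $X_{n-1}=A^{n-1}$ together with $\langle a,b\rangle=A$ (proved via minimality of $\algA$) to fill in $(A\setminus C)^n$. Only with $X_n=A^n$ in hand, using symmetry and Lemma~\ref{lembbbbb} to shift the $\beta$'s into the tail, does one get $S_{T;1,\dots,n+1}\supseteq C\times X_n=C\times A^n$ and, by symmetry, the whole inclusion. Your phrase ``idempotency forces the remaining coordinates to range over all of $A$'' is precisely the content of $X_n=A^n$, but this is a nontrivial claim with its own induction, and it must be woven into the main induction rather than taken for free.
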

\begin{proof}
  Let $X_n$ be the $n$-ary subpower of $\algA$ generated by the tuples (note the extra
  tuple at the end):
\[
  \begin{matrix}
    (a,&b,&b,&\dots,&b,&b),\\
    (b,&a,&b,&\dots,&b,&b),\\
    &&&\ddots,\\
    (b,&b,&b,&\dots,&b,&a),\\
    (b,&b,&b,&\dots,&b,&b).
  \end{matrix}
\]
 We prove by induction on $n$ that for every $n\in\{2,3,\dots\}$ we have $S_{T;1,2,\dots,n}=A^n\setminus
(A\setminus C)^n$ and $X_{n-1}=A^{n-1}$.

The claim holds for $n=2$ by the previous lemma (to see that $X_1=\langle a,b\rangle=A$,
use the minimality of $\algA$ as in the proof of Lemma~\ref{lemcinB}). Assume that the claim holds for some $n$. We show
that it holds for $n+1$.

We first prove $X_n=A^n$. By examining how $R_\infty$ is generated, one sees that
$X_n$ contains the whole $S_{T;1,2,\dots,n}$ which is equal to $A^n\setminus(A\setminus C)^n$ by the induction hypothesis. It
remains to prove that $X_n$ also contains $(A\setminus C)^n$.  Let
$(x_1,\dots,x_{n-1},x_n)$ be a tuple with $x_i\in A\setminus C$ for each $i$.
Since $a\in C$, we know that the tuple $(x_1,\dots,x_{n-1},a)$ is in $X_n$.
Moreover, it is apparent from the generators  
that $X_{n-1}\times \{b\}\subset X_n$, so the induction hypothesis implies $(x_1,\dots,x_{n-1},b)\in X_n$. 
We have shown that the subuniverse $\{z \colon (x_1, \dots, x_n,z) \in X_n\} \leq \algA$ contains $a$ and $b$, therefore it is equal to $A = \langle a,b \rangle$. In particular, $(x_1, \dots, x_n) \in X_n$, as claimed. 

We are now ready to show that $S_{T;1,2,\dots,n+1}=A^{n+1}\setminus(A\setminus
C)^{n+1}$. We begin with the inclusion ``$\supseteq$''.  
Given any $c\in C$, the induction hypothesis gives us that the $n$-ary tuples
$(c,a,b,\dots,b)$ and $(c,b,b,\dots,b)$ lie in $S_{T;1,2,\dots,n}$.
Therefore,
\[
  (c,a,b,\dots,b,\beta,\beta,\dots), (c,b,b,\dots,b,\beta, \beta, \dots )\in S_T.
\]
Using Lemma~\ref{lembbbbb} we get
\[
  (c,a,b,\dots,b,b),(c,b,b,\dots,b,b)\in S_{T;1,2,\dots,n+1}.
\]

Since $S_{T;1,2,\dots,n+1}$ is symmetric, it contains all generators of $C\times
X_n$, therefore  $S_{T;1,2,\dots,n+1}\supseteq C\times X_n=C\times
A^n$. One more use of this symmetry then yields $S_{T;1,2,\dots,n+1}\subseteq A^{n+1}\setminus(A\setminus C)^{n+1}$



It remains to prove that $S_{T;1,2,\dots,n+1}$ does not contain any element
from $(A\setminus C)^{n+1}$. For a contradiction, assume that $(y_1,\dots,y_{n+1})\in (A\setminus
C)^{n+1}\cap S_{T;1,2,\dots,n+1}$. We use a trick similar to the one we employed when proving Lemma~\ref{lemBinaryblocker}:
Let $T'=T\times\{(y_1,\dots,y_n)\}$; we claim that $S_{T'}$ is $B$-essential,
and that $B^{+S_{T';1,2}}\supsetneq C$. This will contradict the maximality of $C$.

The $B$-essentiality of $S_{T'}$ is seen as follows. Since $(y_1,\dots,y_n,y_{n+1})\in
S_{T;1,2,\dots,n+1}$, we have $(y_{n+1},\beta,\beta,\dots)\in S_{T'}$. Moreover, were
$(\beta,\beta,\dots)\in S_{T'}$, we would have $(y_1,\dots,y_{n},\beta,\beta,\dots)\in S_T.$ Therefore, $(y_1,\dots,y_n)\in
S_{T;1,2,\dots,n}\cap (A\setminus C)^n$ which is impossible by the induction hypothesis.

Let now $c\in C$. Then $(y_1,\dots,y_n,c)\in A^n\times C\subset
S_{T;1,2,\dots,n+1}$, which gives us $(c,\beta)\in S_{T';1,2}$. Therefore $C\subset B^{+S_{T';1,2}}$.
Moreover, $y_{n+1}\in B^{+S_{T';1,2}}\setminus C$, making $B^{+S_{T';1,2}}$ strictly
larger than $C$.
\end{proof}

\section{Deciding absorption}
\label{secDeciding}
Armed by Theorem~\ref{MainThm}, we now show how to algorithmically
decide the absorption problem:

\begin{problem}[Deciding absorption]\ \\
{\bf Input:} $\algA$ finite idempotent algebra with finitely many basic operations (the operations are
given by tables of values), $B\leq \algA$.

\noindent
{\bf Output:} ``Yes'' if $B\absorbs \algA$, ``No'' otherwise.
\end{problem}

Note that the size of the
input for our problem is not just $|A|$, but $|A|$ plus the tables of all the
operations on $|A|$, so for one $k$-ary operation, the input size is of the
order of $|A|^k$. We denote this input size $\|\algA\|$.

By Theorem~\ref{MainThm}, it is enough to decide whether $B\Jabsorbs\algA$
and if so, whether there are no $B$-blockers in $\algA$. Of these two
subproblems, the second is the harder one.

Let us begin by devising  a polynomial time algorithm that, given idempotent
$\algA$ and $B\leq \algA$, decides whether $B\Jabsorbs \algA$.  The algorithm is based on the following variant of the result by Ralph Freese and Matt Valeriote~\cite[Proposition~5.7]{matt-complexity-maltsev-conditions}.  

\begin{theorem}\label{thmJPath}
  Let $\algA$ be a finite idempotent algebra and $B\leq \algA$. Then $B\Jabsorbs
  \algA$ if and only if 
  for every $a,c,d\in A$ and every $b_1,b_2\in B$, the digraph $\relstr{G}=(A,E)$ with the
  edge set
  \[
    E=\{(u,v)\colon \exists b\in B, (b,u,v)\in \langle(b_1,a,a), (b_2,c,c),
    (d,a,c)\rangle\}
  \]
contains a directed path from $a$ to $c$.
\end{theorem}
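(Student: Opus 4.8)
The plan is to prove both implications directly, using Lemma~\ref{lemAbsPP} and Lemma~\ref{lemDirectedConnectivity} to move between the J\'onsson absorption condition and directed connectivity in the digraph $\relstr{G}$, and using the minimality-type arguments only where needed. Write $Q = \langle (b_1,a,a),(b_2,c,c),(d,a,c)\rangle \leq \algA^3$ for the generated subalgebra appearing in the statement, so that $E = \{(u,v) : \exists b\in B,\ (b,u,v)\in Q\}$.

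For the forward direction, assume $B\Jabsorbs\algA$ via a directed J\'onsson absorption chain $d_0,\dots,d_n$. Fix $a,c,d\in A$ and $b_1,b_2\in B$ and form the corresponding digraph $\relstr{G}=(A,E)$. I would first observe that $(a,a),(c,c)\in E$: indeed $(b_1,a,a)$ and $(b_2,c,c)$ are generators of $Q$ with first coordinate in $B$. The natural move is to produce an auxiliary ``loose'' digraph in which connectivity from $a$ to $c$ is obvious, then pull it back using J\'onsson absorption. Concretely, let $F = \{(u,v) : \exists x\in A,\ (x,u,v)\in Q\}$, i.e. the same pp-definition but with the first coordinate ranging over all of $A$ rather than over $B$. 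Then $E$ is pp-definable from $Q$ and $B$, $F$ is pp-definable from $Q$, and because $B\Jabsorbs\algA$ and $Q\Jabsorbs Q$ (trivially), Lemma~\ref{lemAbsPP} gives $\alg E\Jabsorbs\alg F$. Since $(d,a,c)\in Q$ we have $(a,c)\in F$, and combined with the loops $(a,a),(c,c)\in E$ we may apply Lemma~\ref{lemDirectedConnectivity} (with the roles $a,b := a,c$): a directed path $a\to c$ in $(A,F)$ yields one in $(A,E)=\relstr{G}$. The path $a\to c$ in $F$ of length one exists because $(d,a,c)\in Q$. This finishes the forward direction.

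For the converse, assume the digraph condition holds for all parameters; I want to build a directed J\'onsson absorption chain for $B$. The idea is to run the argument of Lemma~\ref{lemFence}/Lemma~\ref{lemDirectedConnectivity} ``in reverse'': the existence of short directed paths in every such $\relstr{G}$ should let us assemble the terms $d_i$. Pick the worst case: take all the digraphs $\relstr{G}=\relstr{G}(a,c,d,b_1,b_2)$ over the finitely many choices of $a,c,d\in A$, $b_1,b_2\in B$; a directed path from $a$ to $c$ of bounded length exists in each (bounded by $|A|$). By considering the relation on $\algA$ generated appropriately, one shows that for a suitable fixed length $m$ (one can take $m=|A|$, padding shorter paths using loops at $c$), there is a single term-pattern realizing, uniformly in all parameters, a directed path of length $m$ from the first ``diagonal'' generator to the last. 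Unwinding what an edge of $\relstr G$ means in terms of $Q$, each step of such a path corresponds to a ternary term $t$ with $t(b_1,a,a)$, $t(b_2,c,c)$, $t(d,a,c)$ satisfying the edge relation; stitching $m$ of these together along the path and relabelling variables produces terms $d_0,d_1,\dots,d_m$ with $d_i(B,A,B)\subseteq B$, $d_0(x,y,z)=x$, $d_m(x,y,z)=z$, and $d_i(x,y,y)=d_{i+1}(x,x,y)$. This is exactly a directed J\'onsson absorption chain, so $B\Jabsorbs\algA$. (Freese--Valeriote's Proposition~5.7 is the model for exactly this kind of ``path $\Leftrightarrow$ terms'' translation; I would cite it and mimic the bookkeeping.)

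The main obstacle is the converse direction: extracting an honest J\'onsson \emph{chain} (with the compatible substitutions $d_i(x,y,y)=d_{i+1}(x,x,y)$ and the correct boundary terms) from the mere existence of directed paths in each $\relstr{G}$, uniformly over all parameter choices. The subtlety is that different parameter tuples $(a,c,d,b_1,b_2)$ a priori give paths of different lengths realized by different terms, and one must (i) equalize the path lengths (via loops, which exist precisely because the algebra is idempotent, giving $(x,x)\in E$ whenever $x$ is among the relevant elements), and (ii) choose the edge-witnessing terms coherently so that consecutive ones glue. The standard fix is to pass to the term algebra / a large enough power and use idempotency to absorb the bookkeeping, exactly as in the cited Freese--Valeriote proof; I expect the write-up there to transfer essentially verbatim, with $B$ playing the role of the ``distinguished'' set.
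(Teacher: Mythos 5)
Your forward direction is correct and in fact slightly more modular than the paper's: you observe that with $F = \{(u,v) : \exists x\in A,\ (x,u,v)\in Q\}$, Lemma~\ref{lemAbsPP} gives $\alg E\Jabsorbs \alg F$, and then Lemma~\ref{lemDirectedConnectivity} together with the loops $(a,a),(c,c)\in E$ and the edge $(a,c)\in F$ yields the path. The paper just applies the chain $d_0,\dots,d_n$ columnwise to the matrix with rows $(b_1,a,a)$, $(d,a,c)$, $(b_2,c,c)$ and reads off the path $(d_i(a,a,c),d_i(a,c,c))$; this is the same computation, inlined rather than routed through the two lemmas. Either write-up is fine.

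The converse direction, however, has a genuine gap. You correctly identify the obstacle — that different parameter tuples $(a,c,d,b_1,b_2)$ produce paths of different lengths witnessed by different terms, and you need one coherent chain — but your proposed fix (``equalize path lengths by padding with loops, then stitch the edge-witnessing terms together'') does not work as stated. The problem is not merely a bookkeeping issue that idempotency absorbs: to get a single chain $d_0,\dots,d_m$ one must exhibit a directed path in the graph associated to a \emph{single} large subpower $R\leq\algA^{|A||B|^2+2|A|^2}$ (generated by the three projection ``tuples of tuples''), and the existence of such a path in the big power is not a consequence of its existence coordinatewise; the big graph has far fewer edges than the product of the small graphs. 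The paper handles this by defining a two-parameter property $D(i,j)$ (the path condition for generators living in $B^i\times A^{2j}$), and proving $D(1,1)\Rightarrow D(i,j)$ by two non-trivial induction lemmas; the second of these (Lemma~\ref{lem2nd}) requires choosing a counterexample with $\langle\tupl a,\tupl c\rangle$ inclusion-minimal and exploiting that minimality to recover connectivity after an intermediate vertex ``escapes.'' None of this appears in your sketch, and ``the Freese--Valeriote write-up transfers essentially verbatim'' is doing all the actual work in your argument. To complete the proof you would need to reproduce (or adapt) that $D(i,j)$ bootstrapping, which is the real content of the hard direction.
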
 

The subuniverse $\langle(b_1,a,a)$, $(b_2,c,c)$, $(d,a,c)\rangle \leq \algA^3$ 
can be visualised as a colored digraph: an element $(b,u,v)$ is regarded as the edge $(u,v)$ colored by $b$. 
The theorem asks for a $B$-colored directed path from $a$ to $c$.

\begin{proof}
If $B\Jabsorbs\algA$, then applying the J\'onsson absorption terms
$d_0,d_1,\dots,d_n$ to the columns of the matrix
\[
  \begin{pmatrix}
    b_1&a& a\\
    d & a&c\\
    b_2 &c&c\\
  \end{pmatrix}
\]
gives us a sequence of $n+1$ triples whose first coordinates are in $B$ and
the second and third coordinates form a path in $\relstr{G}$ from $a$ to $c$,
proving the ``only if'' implication.

To prove the other, harder, implication, we introduce the property $D(i,j)$, a sort of partial
J\'onsson absorption:
\begin{definition}\label{defDij}
  Denote by $D(i,j)$ the
following property: Whenever 
$\tupl b_1,\tupl b_2 \in B^i$, $\tupl d\in A^i$, $\tupl a,\tupl c\in A^j$, and 
$R$ is the subuniverse of $\algA^{i+2j}$ generated by  $(\tupl b_1,\tupl a,\tupl
a)$, $(\tupl b_2,\tupl c,\tupl c)$,
$(\tupl d,\tupl a,\tupl c)\in R$, then the digraph $\relstr{H}=(A^j,F)$ with the edge set
\[
  F=\{(\tupl u,\tupl v)\in (A^j)^2\colon (\tupl \beta,\tupl
    u,\tupl v)\in R\}
\]
contains a directed path from $\tupl a$ to $\tupl c$. (We use the wild card
$\tupl \beta$ for ``there exists some suitable tuple of elements of $B$''.)
\end{definition}

Observe that $D(1,1)$ is just a reformulation of the ``there exists a path'' condition from the
statement of Theorem~\ref{thmJPath}. We show that $D(1,1)$ implies $D(i,j)$ for
any $i,j\in \en$, which will almost immediately give us a J\'onsson
absorbing chain for $B \Jabsorbs \algA$.

To show that $\algA$ has $D(i,j)$ for any $i,j$, we need three lemmas.
\begin{lemma}\label{lemEdges} 
  Let $\tupl a, \tupl b_1,\tupl b_2, \tupl c, \tupl d$ and $R$ be 
  as in the definition of $D(i,j)$. 
  Then for every $\tupl e\in \langle \tupl a,\tupl c\rangle$, we have
  \[
    (\tupl \beta,\tupl e,\tupl e),\, (\tupl \alpha,\tupl a, \tupl e),\,(\tupl
    \alpha,\tupl  e,\tupl c)\in R.
  \]
\end{lemma}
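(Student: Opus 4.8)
The plan is to extract all three conclusions from a single application of the hypothesis that $\algA$ is idempotent, together with the fact that $R$ is generated (as a subuniverse of $\algA^{i+2j}$) by the three tuples $(\tupl b_1,\tupl a,\tupl a)$, $(\tupl b_2,\tupl c,\tupl c)$, $(\tupl d,\tupl a,\tupl c)$. The key observation is that an arbitrary element $\tupl e\in\langle\tupl a,\tupl c\rangle$ is, by definition of the generated subuniverse in $\algA^j$, of the form $\tupl e = s(\tupl a,\tupl c)$ for some binary term $s$ of $\algA$ (applied coordinatewise). I would fix such an $s$ at the outset.

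First I would produce $(\tupl\beta,\tupl e,\tupl e)\in R$: apply the binary term $s$ coordinatewise to the pair of generators $(\tupl b_1,\tupl a,\tupl a)$ and $(\tupl b_2,\tupl c,\tupl c)$. Since $R$ is a subuniverse, the result lies in $R$; its last two blocks are $s(\tupl a,\tupl c)=\tupl e$ and $s(\tupl a,\tupl c)=\tupl e$, and its first block is $s(\tupl b_1,\tupl b_2)\in B^i$ because $B$ is a subuniverse. Thus it is a witness of the form $(\tupl\beta,\tupl e,\tupl e)$. Next, for $(\tupl\alpha,\tupl a,\tupl e)\in R$: apply $s$ coordinatewise to the generators $(\tupl b_1,\tupl a,\tupl a)$ and $(\tupl d,\tupl a,\tupl c)$. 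The middle block becomes $s(\tupl a,\tupl a)=\tupl a$ by idempotency of $s$, and the last block becomes $s(\tupl a,\tupl c)=\tupl e$; the first block is some tuple in $A^i$, which is all that $\tupl\alpha$ requires. Symmetrically, for $(\tupl\alpha,\tupl e,\tupl c)\in R$: apply $s$ coordinatewise to $(\tupl d,\tupl a,\tupl c)$ and $(\tupl b_2,\tupl c,\tupl c)$; the middle block is $s(\tupl a,\tupl c)=\tupl e$ and the last block is $s(\tupl c,\tupl c)=\tupl c$ by idempotency.

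There is essentially no obstacle here; the only point requiring a word of care is that the same term $s$ that represents $\tupl e$ over $\langle\tupl a,\tupl c\rangle$ can be reused verbatim in all three computations, and that idempotency is exactly what collapses $s(\tupl a,\tupl a)$ to $\tupl a$ and $s(\tupl c,\tupl c)$ to $\tupl c$ in the two one-sided identities. One should also note that $s$ need not be idempotent as a \emph{term} unless $\algA$ is idempotent, which is part of the standing hypothesis on $\algA$ throughout this section, so the argument goes through.
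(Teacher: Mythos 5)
Your proof is correct and coincides with the paper's own argument: in both cases one writes $\tupl e = t(\tupl a,\tupl c)$ for a binary term $t$ and then applies $t$ to the three pairs of generators $\{(\tupl b_1,\tupl a,\tupl a),(\tupl b_2,\tupl c,\tupl c)\}$, $\{(\tupl b_1,\tupl a,\tupl a),(\tupl d,\tupl a,\tupl c)\}$, and $\{(\tupl d,\tupl a,\tupl c),(\tupl b_2,\tupl c,\tupl c)\}$, using idempotency to collapse $t(\tupl a,\tupl a)$ and $t(\tupl c,\tupl c)$. No gaps.
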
 

\begin{proof} Let $t$ be a term of $\algA$ such that $\tupl e=t(\tupl a,\tuple  
  c)$. We obtain the three tuples we need (in the order
  listed in the statement of the lemma) by applying $t$ to the columns of the three matrices:
  \[
    \begin{pmatrix}
      \tupl b_1&\tupl a&\tupl a\\
      \tupl b_2&\tupl c&\tupl c\\
    \end{pmatrix},\,
    \begin{pmatrix}
 \tupl b_1&\tupl a&\tupl a\\
      \tupl d&\tupl a&\tupl c\\
          \end{pmatrix},\,
    \begin{pmatrix}
 \tupl d&\tupl a&\tupl c\\
      \tupl b_2&\tupl c&\tupl c\\
    \end{pmatrix}.
  \]
\end{proof}

\begin{lemma}\label{lem1st} 
  If $\algA$ satisfies $D(1,j)$ and $D(i,j)$, then $\algA$ satisfies $D(i+1,j)$.  
\end{lemma}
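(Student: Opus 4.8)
The plan is to split the $i{+}1$ coordinates of the ``$B$-block'' of $R$ into a single ``head'' coordinate and a ``tail'' block of length $i$, use $D(1,j)$ to build a path from $\tupl a$ to $\tupl c$ controlled by the head alone, and then upgrade each edge of that path, one at a time, using $D(i,j)$ on the tail. Concretely, take $\tupl b_1,\tupl b_2\in B^{i+1}$, $\tupl d\in A^{i+1}$, $\tupl a,\tupl c\in A^j$ and $R=\langle(\tupl b_1,\tupl a,\tupl a),(\tupl b_2,\tupl c,\tupl c),(\tupl d,\tupl a,\tupl c)\rangle\leq\algA^{(i+1)+2j}$ as in Definition~\ref{defDij} for $D(i{+}1,j)$; we must find a directed path from $\tupl a$ to $\tupl c$ in $F=\{(\tupl u,\tupl v)\colon(\tupl\beta,\tupl u,\tupl v)\in R\}$. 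Projecting $R$ onto its first coordinate together with the last $2j$ coordinates yields a subpower $S$ of $\algA^{1+2j}$ of exactly the shape required by $D(1,j)$ (with the same $\tupl a,\tupl c$), so $D(1,j)$ produces a directed path $\tupl a=\tupl p_0,\tupl p_1,\dots,\tupl p_m=\tupl c$ together with elements $\beta_k\in B$ such that $(\beta_k,\tupl p_{k-1},\tupl p_k)\in S$; pulling each of these back to $R$ gives $\tupl q_k\in A^i$ with $(\beta_k,\tupl q_k,\tupl p_{k-1},\tupl p_k)\in R$.

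It now suffices to produce, for each $k$, a directed path from $\tupl p_{k-1}$ to $\tupl p_k$ inside $F$; concatenating these over $k=1,\dots,m$ yields the required path. Fix $k$. Each $\tupl p_k$ lies in the subpower $\langle\tupl a,\tupl c\rangle$ of $\algA^j$ (it occurs as a ``path-block'' of an element of $R$), so Lemma~\ref{lemEdges} applied to $R$ supplies the ``loop'' tuples $(\gamma,\tupl b_1^{*},\tupl p_{k-1},\tupl p_{k-1})$ and $(\gamma',\tupl b_2^{*},\tupl p_k,\tupl p_k)$ in $R$ with $\gamma,\gamma'\in B$ and $\tupl b_1^{*},\tupl b_2^{*}\in B^i$. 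Together with the ``diagonal'' tuple $(\beta_k,\tupl q_k,\tupl p_{k-1},\tupl p_k)\in R$, let $R_k^{+}\leq R$ be the subpower of $\algA^{(i+1)+2j}$ generated by these three tuples. The key observation is that all three generators have their head coordinate in $B$, so the first-coordinate projection of $R_k^{+}$ is contained in $\langle\gamma,\gamma',\beta_k\rangle\subseteq B$; hence \emph{every} element of $R_k^{+}$ has head in $B$.

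Now project $R_k^{+}$ away from the head to obtain $R_k=\langle(\tupl b_1^{*},\tupl p_{k-1},\tupl p_{k-1}),(\tupl b_2^{*},\tupl p_k,\tupl p_k),(\tupl q_k,\tupl p_{k-1},\tupl p_k)\rangle\leq\algA^{i+2j}$ (projection commutes with generation). This has exactly the form required by $D(i,j)$, with $\tupl a,\tupl c$ replaced by $\tupl p_{k-1},\tupl p_k$ and with $\tupl q_k\in A^i$ in the role of ``$\tupl d$'' — which is legitimate precisely because $D(i,j)$ imposes no membership constraint on that generator. Thus $D(i,j)$ gives a directed path $\tupl p_{k-1}=\tupl w_0,\dots,\tupl w_r=\tupl p_k$ with $(\tupl\beta',\tupl w_{t-1},\tupl w_t)\in R_k$ for suitable $\tupl\beta'\in B^i$. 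Lifting each such tuple back to $R_k^{+}\subseteq R$ attaches a head, which by the previous paragraph lies in $B$; so $(\tupl w_{t-1},\tupl w_t)\in F$, giving the desired path from $\tupl p_{k-1}$ to $\tupl p_k$ in $F$.

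The main obstacle, and the reason for this particular arrangement, is keeping \emph{all} $i{+}1$ coordinates of the $B$-block simultaneously inside $B$ along the final path: passing to a sub-block of coordinates is harmless, but lifting back would in general lose control of the forgotten coordinate. This is resolved by the interplay of two facts: $D(1,j)$ hands us the diagonal tuple $(\beta_k,\tupl q_k,\tupl p_{k-1},\tupl p_k)$ with head in $B$ and tail completely unconstrained (exactly what the ``$\tupl d$''-generator of $D(i,j)$ tolerates), and a subpower generated by tuples whose heads lie in $B$ has all its heads in $B$.
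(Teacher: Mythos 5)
Your proof is correct and uses essentially the same ideas as the paper's: project away part of the $B$-block to obtain a skeleton path from one hypothesis, then refine each skeleton edge with the other hypothesis, using Lemma~\ref{lemEdges} for the loop tuples and the observation that a subpower generated by tuples with $B$-controlled coordinates keeps those coordinates in $B$. The only difference is that you apply $D(1,j)$ first (projecting away the tail) and refine with $D(i,j)$, whereas the paper applies $D(i,j)$ first (projecting away the head) and refines with $D(1,j)$ — a mirror-image decomposition of $i+1=1+i$ that works equally well.
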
 

\begin{proof} 
  Given  $\tupl b_1,\tupl b_2 \in B^{i+1}$, $\tupl d\in
  A^{i+1}$, $\tupl a,\tupl c\in A^j$ and 
	$R = \langle (\tupl b_1,\tuple a, \tuple a), (\tupl b_2, \tupl c, \tuple c), (\tupl d, \tupl a, \tupl c)\rangle \leq\algA^{i+1+2j}$, we construct and examine the digraph $\relstr{H}$ as defined  above. 
	
	Consider the projection of $R$ onto
  all its coordinates except the first one. Using $D(i,j)$, we
  find a sequence $\tupl e_0,\tupl e_1,\tupl e_2,\dots, \tupl
  e_k\in A^j$ and a sequence $f_1,\dots,f_{k}\in A$ such that $\tupl e_0=\tupl
  a$, $\tupl e_k=\tupl   c$, and for each $\ell=1,2,\dots k$ we have
  $(f_\ell\tupl \beta, \tupl e_{\ell-1},\tupl e_{\ell})\in R$ (see
  Figure~\ref{figlem1st}).

  For each $\ell$, the tuples $\tupl e_{\ell-1}$ and $\tupl e_{\ell}$ are in $\langle \tupl a, \tupl c \rangle$, therefore
	some tuples of the form $(\beta\tuple \beta,\tupl e_{\ell-1}, \tuple
	e_{\ell-1})$ and $(\beta\tupl \beta,\tupl e_{\ell},\tupl e_{\ell})$ are
	in $R$ by Lemma~\ref{lemEdges}. Since $(f_\ell\tupl \beta, \tupl
	e_{\ell-1},\tupl e_{\ell})$ is in $R$ as well, by applying $D(1,j)$ to
	the subuniverse $S \leq \algA^{1+2j}$ generated by the coordinates $1,
	i+2, i+3, \dots, i+2j+1$ of the three tuples from this paragraph, we get a $B$-labelled path from $\tupl e_{\ell-1}$ to
  $\tupl e_{\ell}$. Notice that $S$ is a subset of 
	\[
    \{(u,\tupl v,\tupl w)\in A^{1+2j}\colon (u\tupl \beta,\tupl v,\tupl w)\in
    R\},
  \]
	therefore we can lift this path to a $B^{i+1}$-labeled path from $\tupl e_{\ell-1}$
  to $\tupl e_\ell$ in $\relstr{H}$. Concatenating all these paths, we get a path from $\tupl
  a$ to $\tupl c$ in $\relstr{H}$, just as we needed.
\end{proof}

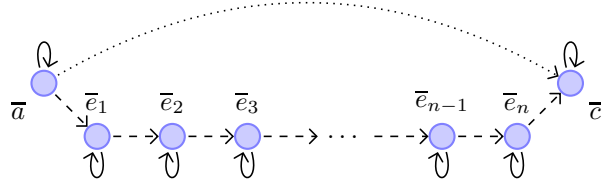
\begin{figure}
  \begin{center}
    \begin{tikzpicture}
    \node[vertex,label=below left:$\tupl a$]  (a) {};
    \node[vertex,below right of=a,label=$\tupl e_1$](e1){};
    \node[vertex,right of=e1,label=$\tupl e_2$](e2){};
    \node[vertex, right of=e2,label=$\tupl e_3$](e3){};
    \node[vertex,right of =a,label=below right:$\tupl c$,node distance=7cm](c){};
    \node[vertex,below left of=c,label=$\tupl e_n$](en){};
    \node[vertex,left of=en,label=$\tupl e_{n-1}$](en1){};
    \node at($(e3)!0.5!(en1)$) (e4){$\dots$};
    \foreach \i in {1,2,3,n1,n} {
      \draw[diedge] (e\i) edge[loop below] (e\i);
    }
    \draw [diedge] 
    (a) edge[loop above](a)
    (c) edge[loop above](c)
    (a) edge[dashed](e1)
    (e1) edge[dashed] (e2)
    (en1) edge[dashed] (en)
    (en) edge[dashed] (c)
    (e2) edge[dashed] (e3)
    (e3) edge[dashed] (e4)
    (e4) edge[dashed] (en1)
    (a) edge[dotted, bend left] (c);
  \end{tikzpicture}
  \end{center}
  \caption{Proving Lemma~\ref{lem1st}. Solid lines represent tuples of $R$ that
    begin with $B^{i+1}$, while dashed lines represent 
  tuples of $R$ that begin with $A\times B^i$. The dotted line is the tuple
$(\tuple d,\tuple a,\tuple c)$.} 
  \label{figlem1st}
\end{figure}

\begin{lemma}\label{lem2nd} 
  If $\algA$ satisfies $D(i,1)$ and $D(i,j)$, then $\algA$ satisfies $D(i,j+1)$.  
\end{lemma}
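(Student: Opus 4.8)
The plan is to mimic the proof of Lemma~\ref{lem1st}, but now increasing the second parameter $j$ rather than the first parameter $i$. Concretely, given $\tupl b_1,\tupl b_2\in B^i$, $\tupl d\in A^i$, $\tupl a,\tupl c\in A^{j+1}$, and $R=\langle(\tupl b_1,\tupl a,\tupl a),(\tupl b_2,\tupl c,\tupl c),(\tupl d,\tupl a,\tupl c)\rangle\le\algA^{i+2(j+1)}$, I want to build a directed path from $\tupl a$ to $\tupl c$ in the digraph $\relstr H=(A^{j+1},F)$ whose edges $(\tupl u,\tupl v)$ are those with $(\tupl\beta,\tupl u,\tupl v)\in R$. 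Write $\tupl a=(a',\tupl a'')$ and $\tupl c=(c',\tupl c'')$ where $a',c'\in A$ and $\tupl a'',\tupl c''\in A^j$, splitting the ``source''/``target'' blocks of $R$ into a first coordinate and a remaining block of size $j$.

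First I would apply $D(i,1)$ to the first coordinate of each of the two blocks (i.e.\ restrict $R$ to the coordinates indexing the first $i$ entries together with the first ``source'' entry and the first ``target'' entry); this produces a sequence $a'=g_0,g_1,\dots,g_k=c'$ in $A$ and witnesses $(\tupl\beta, g_{\ell-1},g_\ell)\in R'$, where $R'$ is the appropriate projection of $R$, so that along each step we have a tuple in $R$ of the form $(\tupl\beta,\, g_{\ell-1},\tupl u_{\ell-1},\, g_\ell,\tupl u_\ell)$ for suitable $\tupl u_{\ell-1},\tupl u_\ell\in A^j$. Then, exactly as in Lemma~\ref{lem1st}, for each fixed $\ell$ I would use Lemma~\ref{lemEdges} to get the ``reflexive'' tuples needed and apply $D(i,j)$ to the sub-block obtained by freezing the first source/target coordinates to $g_{\ell-1}$ and $g_\ell$ respectively, i.e.\ work inside the subuniverse $\{(\tupl x,\tupl v,\tupl w)\in A^{i+2j}\colon (\tupl x,g_{\ell-1},\tupl v,\,g_\ell,\tupl w)\in R\}$, to connect the $j$-blocks. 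This yields, for each $\ell$, a directed path in the $j$-dimensional digraph from $\tupl u_{\ell-1}$ to $\tupl u_\ell$ which lifts back to a path in $\relstr H$ keeping the first coordinate constant; splicing in the single edges that change the first coordinate from $g_{\ell-1}$ to $g_\ell$, and concatenating over $\ell$, gives the desired path from $\tupl a$ to $\tupl c$.

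The routine bookkeeping is making sure that at each stage the ``there exists $B$-tuple'' wildcards and the reflexive loop tuples produced by Lemma~\ref{lemEdges} are available in the relevant sub-subuniverse, and that lifting a path in a projection of $R$ back to $R$ (by re-inserting $B$-coordinates) is legitimate — both points are handled exactly as in the proof of Lemma~\ref{lem1st}, since a $B$-labelled path in a projection lifts to a $B$-labelled path in $R$ by choosing witnesses. The one genuine asymmetry to check, and the place I expect the main care is needed, is that when applying $D(i,j)$ with the first source/target coordinates frozen to $g_{\ell-1}$ and $g_\ell$, the relevant subuniverse is still generated (or at least contained in one generated) by tuples of the three required shapes $(\tupl b_1,\tupl a,\tupl a)$, $(\tupl b_2,\tupl c,\tupl c)$, $(\tupl d,\tupl a,\tupl c)$; since $D(i,j)$ only asks about the digraph on $B$-labelled edges inside \emph{any} such generated subuniverse, one has to verify that freezing coordinates of $R$ keeps us inside a relation of the right form, which follows because the tuples $(\tupl\beta,g_{\ell-1},\tupl u_{\ell-1},g_\ell,\tupl u_\ell)$, $(\tupl\beta,g_{\ell-1},\tupl u_{\ell-1},g_{\ell-1},\tupl u_{\ell-1})$, $(\tupl\beta,g_\ell,\tupl u_\ell,g_\ell,\tupl u_\ell)$ lie in $R$ by Lemma~\ref{lemEdges} and can serve as the three generators required by $D(i,j)$.

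Combined with Lemma~\ref{lem1st} and the base case $D(1,1)$ (the hypothesis of Theorem~\ref{thmJPath}), Lemmas~\ref{lem1st} and~\ref{lem2nd} give $D(i,j)$ for all $i,j\in\en$ by a straightforward double induction, which is what is needed to extract a J\'onsson absorbing chain and complete the proof of Theorem~\ref{thmJPath}.
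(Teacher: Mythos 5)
Your plan is to mimic Lemma~\ref{lem1st} symmetrically, dropping the first source/target coordinate, applying $D(i,1)$ there, and then patching the remaining $j$-block by freezing the first source/target and applying $D(i,j)$. This runs into a genuine obstruction exactly at the place you flag as ``the one genuine asymmetry to check'' and then wave away. The problem is the diagonal generator. To apply $D(i,j)$ to the relation obtained from $R$ by freezing the first source coordinate to $g_{\ell-1}$ and the first target coordinate to $g_\ell$ (or both to the same $g_\ell$, to keep the first coordinate constant when you splice), you need three tuples of the shape $(\tupl b_1, \tupl u, \tupl u)$, $(\tupl b_2, \tupl v, \tupl v)$, $(\tupl d, \tupl u, \tupl v)$ inside that frozen sub-relation, i.e.\ you need $(\tupl\alpha, g, \tupl u, g, \tupl v)\in R$ for some pair $\tupl u,\tupl v$ both coming from the projection/path step. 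Lemma~\ref{lemEdges} does \emph{not} supply this: it only gives diagonals of the form $(\tupl\alpha, \tupl a, \tupl e)$ and $(\tupl\alpha, \tupl e, \tupl c)$ for $\tupl e\in\langle\tupl a,\tupl c\rangle$, not a diagonal $(\tupl\alpha,\tupl e,\tupl e')$ between two arbitrary members of $\langle\tupl a,\tupl c\rangle$. The three tuples you list in the last paragraph also do not all lie in a single frozen sub-relation (their frozen coordinates are $(g_{\ell-1},g_\ell)$, $(g_{\ell-1},g_{\ell-1})$, $(g_\ell,g_\ell)$ respectively), and freezing to two \emph{different} values $g_{\ell-1},g_\ell$ cannot produce a lifted path ``keeping the first coordinate constant.''

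This is precisely the asymmetry between Lemma~\ref{lem1st} and Lemma~\ref{lem2nd} and why the paper's proof of Lemma~\ref{lem2nd} is not a mirror image. When the dropped coordinate is a \emph{witness} coordinate (Lemma~\ref{lem1st}), the $D(i,j)$ path already supplies edges $(f_\ell\tupl\beta,\tupl e_{\ell-1},\tupl e_\ell)\in R$ where consecutive source/target blocks match up, so the needed diagonal is handed to you. When the dropped coordinate is a \emph{source/target} coordinate (Lemma~\ref{lem2nd}), the lift back to $R$ only produces an ``almost chain'': the dropped coordinates of consecutive witnesses need not agree, so there is no ready-made diagonal to start $D(i,1)$ (or $D(i,j)$) from. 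The paper's fix is a new ingredient: take a counterexample to $D(i,j+1)$ with $\langle\tupl a,\tupl c\rangle$ inclusion-minimal, and argue by induction along the almost chain that a failure forces $\langle\tupl a,\tupl f_\ell\rangle=\langle\tupl a,\tupl c\rangle$, which then yields the missing diagonal $(\tupl\alpha,\tupl e_\ell,\tupl f_\ell)\in R$ ``by a reasoning similar to Lemma~\ref{lemEdges}.'' Without some version of this minimality trick your argument stalls at exactly the point you call routine bookkeeping.
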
 
\begin{proof}  
  The idea of the proof is similar to that of Lemma~\ref{lem1st}, but we will
  need to use finiteness of $\algA$ a bit more: Consider a relation
  $R\leq\algA^{i+2(j+1)}$ together with $\tupl b_1,\tupl b_2 \in B^i$, $\tupl d \in A^i$,
   and $\tupl a,\tupl c\in A^{j+1}$ as in Definition~\ref{defDij} and
  assume that $R,\tupl a,\tupl c$ form a counterexample to $D(i,j+1)$ such that
  the set $\langle \tupl a,\tupl c\rangle$ is inclusion minimal among all
  possible
  counterexamples.
  
  Take the projection of $R$ to all indices except for the $(i+1)$-st and
  $(i+j+1)$-st, and apply $D(i,j)$.  This yields an ``almost chain'' from
  $\tupl a$ to $\tupl c$: A sequence of tuples $\tupl a=\tupl e_1,\tupl
  f_1,\tupl e_2\dots, \tupl e_k,\tupl f_k=\tupl c$ such that for all
  $\ell=1,\dots,k-1$ we
  have $(\tupl \beta,\tupl f_\ell,\tupl e_{\ell+1})\in R$, and the tuples
  $\tupl e_\ell$ and $\tupl f_\ell$ agree on all coordinates except possibly
  the first one (see Figure~\ref{figlem2nd}).

  We now claim that for each $\ell$ there is a directed path from $\tupl a$ to
  $\tupl f_\ell$ in the digraph $\relstr H=(A^{j+1}, F)$ (see Definition~\ref{defDij}). We
  proceed by induction on $\ell$. 

  For $\ell=1$, we use Lemma~\ref{lemEdges} to realize that
  the tuples $(\tupl \beta,\tupl a,\tupl a)$, $(\tupl \beta, \tupl f_1,\tupl f_1)$, and
  $(\tupl \alpha,\tupl a,\tupl f_1)$ are all in $R$. Then we can 
  take the projection of $R$ onto the coordinates
  $1,2,\dots,i,i+1,i+j+1$, apply $D(i,1)$, and get a path from
  $\tupl a$ to $\tupl f_1$.

  The induction step is quite similar: Assume that there is a path from $\tupl
  a$ to $\tupl f_{\ell-1}$, but no path from $\tupl a$ to $\tupl f_\ell$. We will
  show that then there is a path from $\tupl e_\ell$ to $\tupl f_\ell$, which when
  combined with the path $\tupl a,\dots,\tupl f_{\ell-1},\tupl e_\ell$ gives us a
  path from $\tupl a$ to $\tupl f_\ell$ after all.
  
  Lemma~\ref{lemEdges} shows that the tuples $(\tupl \beta,\tupl a,\tupl a)$,
  $(\tupl \beta, \tupl f_\ell,\tupl f_\ell)$, and   $(\tupl \alpha,\tupl a,\tupl f_\ell)$ are in $R$, so the
  pair $\tupl a,\tupl f_\ell$ witnesses the failure of $D(i,j+1)$.
  Since we assume that $\langle \tupl a,\tupl c\rangle$ is inclusion minimal
  among all counterexamples to $D(i,j+1)$, we must have $\langle \tupl a,\tupl
  f_\ell\rangle =\langle \tupl a,\tupl c\rangle$, so in particular $\tupl e_\ell\in
  \langle \tupl a,\tupl f_\ell\rangle$. From this, a reasoning similar to
  Lemma~\ref{lemEdges} gives us that $(\tupl \alpha,\tupl e_\ell,\tupl
  f_\ell)\in R$. But then we can project $R$ onto the coordinates
  $1,2,\dots,i,i+1,j+i+1$ and use $D(i,1)$ on the tuples 
  $(\tupl \beta, \tupl e_\ell,\tupl e_\ell)$, $(\tupl \beta,\tupl f_\ell,\tupl
  f_\ell)$, and $(\tupl \alpha,\tupl e_\ell,\tupl f_\ell)$
  to get a path from $\tupl f_\ell$ to
  $\tupl e_\ell$, as was needed.

  To conclude the proof that $\algA$ satisfies $D(i,j+1)$, observe that $\tupl f_k=\tupl c$, so we have just
  shown that there is a directed path in $\relstr H$ from $\tupl a$ to $\tupl
  c$.
\end{proof}

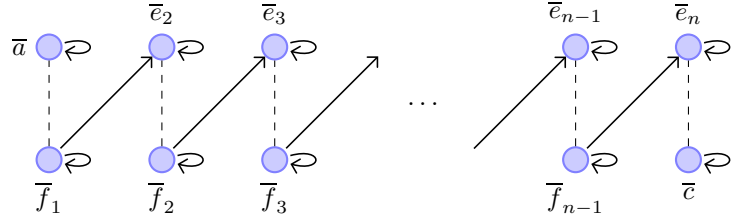
\begin{figure}
  \begin{center}
    \begin{tikzpicture}[node distance=1.5cm]
    \node[vertex,label=left:$\tupl a$]  (e1) {};
    \node[vertex,below of=e1,label=below:$\tupl f_1$](f1){};
    \node[vertex,right of=e1,label=above:$\tupl e_2$](e2){};
    \node[vertex, below of=e2,label=below:$\tupl f_2$](f2){};
    \node[vertex,right of=e2,label=above:$\tupl e_3$](e3){};
    \node[vertex, below of=e3,label=below:$\tupl f_3$](f3){};

    \node[vertex, right of=e3, node distance=4cm,label=above:$\tupl
    e_{n-1}$](en1){};
    \node[vertex,right of=en1,label=above:$\tupl e_n$](en){};
    \node[vertex, below of=en1,label=below:$\tupl f_{n-1}$](fn1){};
    \node[vertex, below of=en,label=below:$\tupl c$](fn){};
    \node at($(f3)!0.5!(en1)$) (dts){$\dots$};

    \node[right of=e3](e4){};
    \node[left of=fn1](fn2){};

    \foreach \i in {1,2,3,n1,n} {
      \draw[diedge] (e\i) edge[loop right] (e\i);
      \draw[diedge] (f\i) edge[loop right] (f\i);
      \draw[dashed] (f\i) edge (e\i);
    }

    \foreach \i/\j in {1/2,2/3,3/4,n2/n1,n1/n} {
      \draw[diedge] (f\i) edge (e\j);
    }
  \end{tikzpicture}
  \end{center}
  \caption{Proving Lemma~\ref{lem2nd}. Solid lines represent tuples of $R$ that
    begin with a tuple from $B^{i}$; dashed lines represent the relation
    ``these two tuples agree on coordinates 2 through $j+2$''. Not pictured:
  Tuples of the form $(\tupl \alpha,\tupl a,\tupl e_k)$.}
  \label{figlem2nd}
\end{figure}

We now repeatedly use Lemmas~\ref{lem1st} and~\ref{lem2nd} until we get that
$\algA$ satisfies $D(|A||B|^2,|A|^2)$. We then consider the three ternary projections
$\pi_1,\pi_2,\pi_3$ as tuples of values; specifically, we evaluate these
projections on all tuples from $A\times B\times A$ (the first block of values), on
all tuples of the form $\{(x,x,y)\colon x,y\in A\}$ (the second block), and on all
tuples of the form $\{(x,y,y)\colon x,y\in A\}$ (the third block). We have three
members of $A^{|A||B|^2+2|A|^2}$. Let $R$ be the relation generated by these
three tuples. Using $D(|A||B|^2,|A|^2)$ on $R$,
we get a path in the digraph $\relstr
H$ that precisely corresponds to a J\'onsson absorption chain for $B \Jabsorbs \algA$.  
This finishes the proof of Theorem~\ref{thmJPath}
\end{proof}

\bigskip

Theorem~\ref{thmJPath} gives us a straightforward algorithm to test whether
$B\Jabsorbs\algA$ for $\algA$ idempotent: Try all possible $a,c,d\in A$,
$b_1,b_2\in B$ (there are $O(|A|^5)$ such choices) and for each choice use the
three triples $(b_1,a,a), (b_2,c,c), (d,a,c)$ to generate a subuniverse
$R\leq \algA^3$. Then go through the list of elements of $R$ and find those that have
their first coordinate in $B$, record their last two entries as edges in 
a new relation $F \subseteq A^2$ and check whether there is a path from $a$ to $c$ in 
the graph $\relstr H=(A,F)$. 

Let us move on to deciding whether $\algA$ contains a $B$-blocker. For this, we make
use of ideas from~\cite{markovic-maroti-mckenzie-blockers}. We start by rewording the
definition of a $B$-blocker:

\begin{lemma}\label{lemBblockers}
  Let $\algA$ be a finite idempotent algebra and $B\leq \algA$. Then $(C,D)$ is a
  $B$-blocker if and only if all the following hold:
\begin{enumerate}
  \item $D \leq \algA$,
  \item $\emptyset\neq C\subset D$,
  \item $C\cap B=\emptyset$,
  \item $D\cap B\neq\emptyset$, and
  \item\label{itmIndices}for each basic operation $t$ of $\algA$, there exists an index $i$ such
    that 
    \[
      t(D,\dots,D,C,D,\dots,D)\subset C,
    \]
    where  $C$ is at the $i$-th coordinate.
\end{enumerate}
\end{lemma}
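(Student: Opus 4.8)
The approach is simple: conditions (1)--(4) of Lemma~\ref{lemBblockers} are word-for-word conditions (1)--(4) of Definition~\ref{def-blocker}, so the whole statement reduces to showing that, in the presence of (1)--(4) (in particular $D\leq\algA$ and $\emptyset\neq C\subseteq D$), condition (\ref{itmCondBBlocker}) of Definition~\ref{def-blocker}, namely ``$D^n\setminus(D\setminus C)^n\leq\algA^n$ for all $n$'', is equivalent to condition (\ref{itmIndices}). I would first recall that a subset of $\algA^n$ is a subuniverse precisely when it is closed under the coordinatewise action of every basic operation of $\algA$, so both conditions are in effect statements about the finitely many basic operations of $\algA$, and it suffices to handle them one operation at a time.

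For ``(\ref{itmIndices}) $\Rightarrow$ (\ref{itmCondBBlocker})'', fix $n$ and a basic operation $t$, say of arity $k$, and pick tuples $\tupl r_1,\dots,\tupl r_k\in D^n\setminus(D\setminus C)^n$. Since $D\leq\algA$, every coordinate of $\tupl r:=t(\tupl r_1,\dots,\tupl r_k)$ lies in $D$, so it is enough to exhibit one coordinate of $\tupl r$ lying in $C$. Let $i$ be the index supplied by (\ref{itmIndices}) for $t$, and let $\ell$ be a coordinate with $(\tupl r_i)_\ell\in C$ — such an $\ell$ exists because $\tupl r_i\notin (D\setminus C)^n$. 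In the $\ell$-th coordinate, $t$ is applied to a $k$-tuple whose $i$-th entry is in $C$ and whose other entries lie in $D$, so by (\ref{itmIndices}) the $\ell$-th coordinate of $\tupl r$ is in $C$, as needed. (Nonemptiness is not an issue, and in any case $D^n\setminus(D\setminus C)^n\neq\emptyset$ since $C\neq\emptyset$.)

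For the converse I would argue contrapositively. Suppose (\ref{itmIndices}) fails for some basic operation $t$ of arity $k$. Then for each index $i\in\{1,\dots,k\}$ there are $d_{i,1},\dots,d_{i,k}\in D$ with $d_{i,i}\in C$ and $t(d_{i,1},\dots,d_{i,k})\in D\setminus C$ (using $D\leq\algA$ for the last membership). Now transpose: for $1\leq j\leq k$ let $\tupl s_j\in D^k$ be the tuple with $(\tupl s_j)_\ell=d_{\ell,j}$. Each $\tupl s_j$ has $(\tupl s_j)_j=d_{j,j}\in C$, so $\tupl s_j\in D^k\setminus(D\setminus C)^k$; but the $\ell$-th coordinate of $t(\tupl s_1,\dots,\tupl s_k)$ is $t(d_{\ell,1},\dots,d_{\ell,k})\in D\setminus C$ for every $\ell$, so $t(\tupl s_1,\dots,\tupl s_k)\in(D\setminus C)^k$. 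Hence $D^k\setminus(D\setminus C)^k$ is not closed under $t$, contradicting (\ref{itmCondBBlocker}) with $n=k$. The one place needing care is exactly this transpose bookkeeping — letting the first index of $d_{i,j}$ label the output coordinate and the second label the argument slot — so that a single output coordinate certifies membership in $C$ from the viewpoint of $t$'s $i$-th argument, while each input tuple separately meets $C$; past that there is no real obstacle, and notably only the instance $n=k$ of (\ref{itmCondBBlocker}) is ever used.
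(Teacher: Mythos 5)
Your proof is correct. The paper itself gives no proof of Lemma~\ref{lemBblockers}; it is stated as a ``rewording'' of Definition~\ref{def-blocker} with the understanding that it follows from the arguments in the cited reference on cube term blockers. Your argument is the natural one and fills that gap: the forward direction is the observation that coordinatewise membership in $C$ propagates through a basic operation $t$ at the distinguished argument position, and the converse is the transpose trick applied to the $k\times k$ matrix $(d_{i,j})$ of violations, which as you note exhibits an escaping tuple already at $n=k$. Both steps are sound (your use of $D\leq\algA$ to keep outputs inside $D$, and the explicit check that $D^n\setminus(D\setminus C)^n\neq\emptyset$ via constant tuples over $C$, cover the only places where care is needed), and the reduction to basic operations rather than all terms is justified since closure under basic operations already implies being a subuniverse.
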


From Lemma~\ref{lemBblockers} it is apparent that, given
$(C,D)$, we can test in time polynomial in $\|\algA\|$ whether $(C,D)$ is a $B$-blocker. 
Thus, deciding $B$-blockers is in the class
$\compNP$.

Can we do better? Generally no, since deciding whether $\algA$ containts a
$B$-blocker turns out to be \compNP-hard even when it is guaranteed that $\algA$ is idempotent, has directed J\'onsson terms, and $B$ is a singleton (thus, in particular, $B\Jabsorbs \algA$). This is in sharp contrast
with deciding the existence of cube term blockers, which can be done in
$\compP$ when $\algA$ is idempotent~\cite{zhuk-kazda}.

\begin{problem}[Deciding singleton-blockers in idempotent CD algebras]\ \\
{\bf Input:} $\algA$ finite idempotent algebra in a congruence distributive variety with
finitely many basic operations (the operations are given by tables of values),
$b\in A$.

\noindent
{\bf Output:} ``Yes'' if $\algA$ contains a $\{b\}$-blocker, ``No'' otherwise.
\end{problem}

\begin{theorem}\label{thmReduction}
  There is a logarithmic space reduction from 3-SAT to deciding
  singleton-blockers in idempotent CD algebras.
\end{theorem}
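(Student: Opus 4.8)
The plan is to reduce 3-SAT. Given a 3-SAT instance $\phi$ with variables $v_1,\dots,v_n$ and clauses $\kappa_1,\dots,\kappa_m$, I would construct in logarithmic space a finite idempotent algebra $\algA=\algA_\phi$ lying in a congruence distributive variety, together with a distinguished element $b\in A$, such that $\algA$ has a $\{b\}$-blocker if and only if $\phi$ is satisfiable. It is harmless (and cleaner) to first normalize $\phi$ so that every clause has three pairwise distinct literals.

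Construction. The universe of $\algA$ consists of $b$, the $2n$ literal elements $x_i,\bar x_i$ ($i\le n$), and a bounded number of auxiliary elements, so $|A|$ is polynomial in $|\phi|$. The basic operations are finitely many operations of bounded arity: a fixed collection of \emph{bookkeeping} operations plus one ternary operation $g_j$ per clause $\kappa_j$; each table entry is given by an explicit local rule (depending only on the clause index and which literal occupies each coordinate), so the algebra is logspace-computable. Congruence distributivity is the first thing to arrange: I would pick the operations so that $\algA$ embeds into a finite power of the two-element implication algebra (which generates a congruence distributive variety), or, equivalently, exhibit directed J\'onsson terms for $\algA$ directly. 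The guiding intuition comes from the $\{0\}$-blocker of the idempotent reduct of the two-element implication algebra described after Proposition~\ref{abs-jabs}: in an implication algebra every idempotent term has a coordinate that ``absorbs'' a designated element, and a singleton-blocker is precisely a globally coherent choice of such coordinates, so the clause operations $g_j$ should make these choices correspond to the choice of a satisfied literal.

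Using the reformulation of blockers in Lemma~\ref{lemBblockers}, the operations are designed so that: (i) the bookkeeping operations force the second component of every $\{b\}$-blocker to be all of $A$ (equivalently, $A$ has no proper subuniverse containing $b$), and force its first component $C$ to be a nonempty \emph{consistent partial assignment} --- a subuniverse disjoint from $\{b\}$, consisting of literal elements, containing at most one of $x_i,\bar x_i$ for each $i$ (any $C$ violating this closes up to include $b$ or equals $A$); (ii) for each clause $\kappa_j=(\ell_{j,1}\vee\ell_{j,2}\vee\ell_{j,3})$, the three coordinates of $g_j$ correspond to $\ell_{j,1},\ell_{j,2},\ell_{j,3}$, and coordinate $i$ of $g_j$ ``absorbs $C$'' inside $D=A$ --- i.e.\ $g_j$ applied with $C$ in coordinate $i$ and $A$ in the other two lands in $C$ --- exactly when $\ell_{j,i}\in C$, while $g_j$ fixes every consistent partial assignment setwise. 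By Lemma~\ref{lemBblockers}, ``$\algA$ has a $\{b\}$-blocker'' then becomes ``there is a consistent partial assignment $C$ meeting every clause'', i.e.\ satisfiability of $\phi$: if $\alpha$ satisfies $\phi$ then $C=\{\text{literals made true by }\alpha\}$ (one per variable) yields a $\{b\}$-blocker $(C,A)$; conversely any $\{b\}$-blocker is of the form $(C,A)$ with $C$ a consistent partial assignment satisfying every clause, which extends to a full satisfying assignment.

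The main obstacle is designing the basic operations so that three requirements hold simultaneously: (a) $\algA$ is congruence distributive --- this is why ordinary semilattice gadgets are unavailable and implication-algebra-style operations (or some other small congruence distributive building block) are the natural choice; (b) for each clause the coordinate-choice in the blocker condition faithfully encodes the choice of a satisfied literal --- a delicate point, because ``coordinate $i$ absorbs $C$'' must hold for \emph{every} element of $C$ placed at coordinate $i$ and for \emph{every} value at the remaining coordinates, so $g_j$ must behave correctly without ``seeing'' $C$; and (c) no spurious blockers occur --- the bookkeeping operations must exclude every $C$ that is not a clean consistent partial assignment and every $D\neq A$. Checking (a)--(c) for the explicit operation tables is the technical heart; the logspace bound and the forward and backward implications of the equivalence are then routine.
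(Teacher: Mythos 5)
Your high-level plan coincides with the paper's: reduce from 3-SAT, build a finite idempotent algebra (logspace from the formula) with explicit directed J\'onsson terms so it lies in a CD variety, add one operation per clause, and use Lemma~\ref{lemBblockers} to identify $\{b\}$-blockers with satisfying assignments. But you stop at the level of a design brief, and the part you defer --- ``checking (a)--(c) for the explicit operation tables is the technical heart'' --- is exactly where the proof has to live. As written there is a concrete obstruction to the specific design you sketch.

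You want the first component $C$ of the blocker to be a ``consistent partial assignment,'' a subuniverse consisting only of literal elements with at most one of $x_i,\bar x_i$. Lemma~\ref{lemBblockers}(5) then demands, for \emph{every} basic operation $t$, a coordinate $i$ with $t(A,\dots,A,C,A,\dots,A)\subseteq C$. With $D=A$ this means: put any element of the (small) set $C$ at coordinate $i$, put arbitrary elements of $A$ at the remaining coordinates, and the output must still land inside $C$. For a $C$ with no ``absorbing/top'' element this is extremely rigid --- the operation essentially must ignore all other coordinates whenever coordinate $i$ carries a literal, which conflicts with (b), where you need the clause operations $g_j$ to actually react to the other inputs to encode the clause. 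Your remark that ``$g_j$ fixes every consistent partial assignment setwise'' is far weaker than what Lemma~\ref{lemBblockers}(5) requires and does not close this. The paper sidesteps the problem by making $1$ a catch-all ``default output'' element and letting $C=\{1\}\cup V$ (with $V$ a set of variables, not literals): every operation returns $1$ unless the inputs match a handful of special patterns, so $t(\dots,C,\dots)\subseteq\{1\}\cup(\text{something already in }C)$ is easy to arrange, and the $d_1,d_2$ and $s_i$ operations can also be made to default to $1$. Without an analogous device, your construction is unlikely to go through, and you have not shown how to avoid it.

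A second, smaller gap: you suggest CD could be obtained by embedding $\algA$ into a power of the two-element implication algebra. That would indeed give CD, but it imposes strong constraints on which operations you may use, and it is not clear this is compatible with the clause gadgets; the paper instead just hands you two explicit ternary operations $d_1,d_2$ forming a directed J\'onsson chain and checks the four identities directly, which is both simpler and leaves you free to design the $s_i$ and $t_j$ independently. I would recommend adopting that tactic rather than the embedding.
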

  \begin{proof}
    Given a nonempty 3-SAT formula over the
    variables $w_1,w_2,\dots,w_n$ of the form
    \[
      \psi(w_1,\dots,w_n)=\bigwedge_{j=1}^m (\lambda_{j1}\vee \lambda_{j2}\vee
  \lambda_{j3}),
  \]
  where the $\lambda$s stand for literals, we consider the algebra $\algA$ with
  the universe $A=\{0,1,w_1,\dots,w_n\}$, binary basic operations
  $s_1(x,y),\dots,s_n(x,y)$, ternary basic operations
  $t_1(x,y,z),\dots,t_m(x,y,z)$, and ternary basic operations 
  $d_1(x,y,z)$ and $d_2(x,y,z)$ (the last two operations will form a directed J\'onsson chain). 
  
  For
  each $i=1,2,\dots,n$, let
  \[
    s_i(x,y)=\begin{cases}
       x& \text{if $x=y$,} \\
       w_i& \text{if $x=0$, $y=1$,}\\
       1& \text{else.}
    \end{cases}
  \]
  The operations $t_j(x,y,z)$ are defined as follows (note that the rules for $t_j$ depend
  on the shape of the $j$-th clause, so only five rules get used for each particular $t_j$):
  \[
    t_j(x,y,z)=
    \begin{cases}
      x&\text{if $x=y=z$}\\
      w_k&\text{if $\lambda_{j1}=w_k$, $y=z=w_k$, and $x=1$,}\\
       &\text{or $\lambda_{j2}=w_k$, $x=z=w_k$, and $y=1$,}\\
      &\text{or $\lambda_{j3}=w_k$, $x=y=w_k$, and $z=1$.}\\
      0&\text{if $\lambda_{j1}=\neg w_k$, $x=w_k$, and $y=z=0$,}\\
      &\text{or $\lambda_{j2}=\neg w_k$, $y=w_k$, and $x=z=0$,}\\
      &\text{or $\lambda_{j3}=\neg w_k$, $z=w_k$, and $x=y=0$,}\\
      1&\text{else.}
    \end{cases}
  \]
Finally, we define $d_1$ and $d_2$:
 \[
      d_1(x,y,z)=
      \begin{cases}
	x& \text{if $x=y$ or $x=z$}\\
	1& \text{else.}
      \end{cases}
    \]
    \[
      d_2(x,y,z)=
      \begin{cases}
	z& \text{if $y=z$ or $x=z$}\\
	1& \text{else.}
      \end{cases}
    \]
    Observe that $x=d_1(x,x,y)$, $d_1(x,y,y)=d_2(x,x,y)$, $d_2(x,y,y)=y$, and
    $d_1(x,y,x)=d_2(x,y,x)=x$ for all $x,y\in A$, so $x,d_1,d_2,z$ is a chain of directed Jónsson
    terms, so $\algA$ lies in a congruence distributive variety.

    We now claim that $\algA$ has a $\{0\}$-blocker $(C,D)$ if and only if the the formula
  $\psi$ is satisfiable. 

  Assume first that there is a satisfying assignment $e$ for $\psi$. We put
  $D=A$ and make $C$ consist of precisely the element 1 and all variables
  $w_i$ such that $e(w_i)=\True$ (in particular, $0\not\in C$). We show that
  $(C,D)$ satisfies all conditions from Lemma~\ref{lemBblockers}. Of
  these conditions, only item~(\ref{itmIndices}) is not immediately clear.

  For each $i$ and each $c \in C$, we have $s_i(\{c\},A) \subseteq \{1,c\} \subseteq C$, so the blocker
  condition is satisfied for these operations. Similarly, we get
  $d_1(C,A,A)$,
  $d_2(A,A,C)\subset C$ because $C$ contains 1.   The operations $t_j$ are more
  interesting: Given an index $j$, consider the $j$-th clause of $\phi$. Since
  $e$ satisfies $\psi$, one of the literals of the $j$-th clause is true. Without
  loss of generality, let it be the first one. We show that then
  $t_j(C,A,A)\subset C$.

  We need to consider two cases. If $\lambda_{j1}=w_k$ then $e(w_k)=\True$ and
  $w_k\in C$. The possible values of
  $t_j(c,y,z)$ with $c\neq 0$ are $c$, $w_k$ and $1$, all of which lie in $C$ when $c \in C$. If,
  on the other hand, $\lambda_{j1}=\neg w_k$, then $w_k\not\in C$ and the possible values of
  $t_j(c,y,z)$ for $c\not \in\{0,w_k\}$ are $c$ and 1. We see that $(C,D)$ is a blocker.

  Now let $(C,D)$ be a $\{0\}$-blocker. We use $(C,D)$ to get a satisfying
  assignment for $\psi$. First observe that $D=A$: Since $D$ contains 0 and at
  least one other element, it must contain 1 (because $s_i(x,0)=1$ for any
  $x\neq 0$). Then the operations $s_i$ give us that $D=A$ (this is in fact the purpose
  of these operations).

  We now claim that $C$ must also contain 1. If not, then $C$ contains at
  least one variable $w_k$. Consider the expressions
  $t_1(w_k,1,1),t_1(1,w_k,1),t_1(1,1,w_k)$. All of these expressions evaluate to
  1, yet at least one of them must lie in $C$ since $(C,D)$ is a blocker. It
  follows that $1\in C$.

  We see that $C$ has the form $\{1\}\cup V$ for some set of variables $V$.
  Consider the assignment $e$ that makes all variables in $V$ true and the
  rest false. We claim that $e$ satisfies $\psi$. 
	What if we are wrong? Then
  there is a clause, say the $j$-th one, none of whose literals is satisfied by
  $e$. We show that then neither $t_j(C,A,A)$, nor $t_j(A,C,A)$, nor $t_j(A,A,C)$
  lie in $C$. 

  Consider $t_j(C,A,A)$; the other two cases are similar. If $\lambda_{1j}=w_k$, then $w_k\not \in V$, so
  $w_k\not\in C$. Then $t_j(1,w_k,w_k)=w_k\not\in C$. If $\lambda_{1j}=\neg w_k$,
  we have $w_k\in C$ and $t_j(w_k,0,0)=0\not \in C$. This contradiction completes the proof.
\end{proof}

While Theorem~\ref{thmReduction} (together with Theorem~\ref{MainThm} and Theorem~\ref{thmJPath}) implies that deciding absorption is
co-\compNP-complete, this does not need to stop us in practice since the problem of
deciding the existence of a $B$-blocker is fixed parameter tractable when
parametrized by the product of all arities of the basic operations in $\algA$. The idea is to
guess the indices in item~(\ref{itmIndices}) of Lemma~\ref{lemBblockers} in
advance and then check whether there is a $(C,D)$ that would work:

\begin{algorithm}[Deciding $B$-blockers]\label{algBblockers}\ \\
  Input: Idempotent algebra $\algA$ with basic operations $t_1,\dots,t_n$ of arities
  $s_1,\dots, s_n$, a subuniverse $B$ of $\algA$. 
  \begin{enumerate}
    \item For each $c\in A\setminus B$, $b\in B$ and each choice $j_1\in [s_1]$, $j_2\in
      [s_2]$, \dots, $j_n\in[s_n]$, do
      \begin{enumerate}
	\item Let $D=\langle b,c\rangle$,
	\item construct a digraph $\relstr{G}$ with the vertex set $D$ by drawing an edge from
	  $u$ to $v$ whenever $v\in t_i(D,D,\dots,D,u,D,\dots,D)$
	  (here $u$ is at the $j_i$-th coordinate),
	\item let $C$ be the set of all vertices in $\relstr{G}$ to which there is a
	  directed path from $c$ (in other words, $C$ is the smallest set
	  that contains $c$ and together with $D$ satisfies item~(\ref{itmIndices}) of
	  Lemma~\ref{lemBblockers}).
	\item If $C\cap B=\emptyset$, output the blocker $(C,D)$, else iterate.
      \end{enumerate}
    \item If no blocker is found, output ``No.''
  \end{enumerate}
\end{algorithm}
If the algorithm outputs a $(C,D)$, it is easy to verify that this pair 
will satisfy all properties from
Lemma~\ref{lemBblockers}. On the other hand, if $\algA$ has  a
$B$-blocker $(C',D')$, then when the algorithm picks $c\in C'$, $b\in B\cap D'$ and chooses
$j_1,\dots,j_n$ as the indices from item~(\ref{itmIndices}) for $(C',D')$, it
will define $D = \langle b,c \rangle \subseteq D'$ and then construct some $C
\subseteq C'$. It is easy to verify that because $(C',D')$ satisfies the
conditions from Lemma~\ref{lemBblockers}, so does $(C,D)$, and we have a
$B$-blocker.

To analyze the time complexity of Algorithm~\ref{algBblockers} (in the RAM
model of computation),
we can assume that $\algA$ has at least one at least binary basic operation (otherwise we can simply answer ``No.'') and so $|A|^2\leq \|\algA\|$.
Steps (1a) and (1b) each take time $O(\|\algA\|)$, step (1c) is a breadth-first
search that can be done in time $O(|A|^2)$ and step (1d) takes time $O(|A|)$,
so each iteration of Step 1 runs in time $O(\|\algA\|)$.  

There are $O(|A|^2)$ choices of $c$ and $b$ and $\prod_{i=1}^n s_i$ choices of
the indices $j_1,\dots,j_n$, giving us a total time complexity of
$O(\|\algA\||A|^2\cdot \prod_{i=1}^n s_i)$. Therefore, if we keep the number
and arities of basic operations of $\algA$ bounded from above, we get a
polynomial time algorithm that should work well for many idempotent algebras
encountered in the wild.

The following theorem sums up our computational complexity results.

\begin{theorem}
  If $\algA$ is a finite idempotent algebra, $B\leq \algA$ then
  \begin{enumerate}
    \item deciding whether $B\Jabsorbs \algA$ is in \compP,
    \item deciding whether $\algA$ contains a $B$-blocker is \compNP-complete even
      if $\algA$ is in a congruence distributive variety and $B$ is a singleton,
    \item deciding whether $B\absorbs \algA$ is co-\compNP-complete, but is fixed
      parameter tractable if we parametrize the problem by the product of all arities
      of the basic operations in $\algA$.
  \end{enumerate}
\end{theorem}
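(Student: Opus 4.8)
The ``proof'' of this theorem is really a bookkeeping exercise that assembles the results proved above, so the plan is to treat each of the three items separately and point to the relevant earlier statement.

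For item (1) the plan is to turn Theorem~\ref{thmJPath} directly into an algorithm. For each of the $O(|A|^5)$ choices of $a,c,d\in A$ and $b_1,b_2\in B$ I would generate the subuniverse $R=\langle(b_1,a,a),(b_2,c,c),(d,a,c)\rangle\leq\algA^3$ by closing the three generators under the basic operations; since $R$ has at most $|A|^3$ elements and each table has size at most $\|\algA\|$, this closure costs time polynomial in $\|\algA\|$. Then I would read off the edge set $F=\{(u,v):\exists b\in B,\ (b,u,v)\in R\}$ in one pass and decide reachability from $a$ to $c$ in $(A,F)$ by breadth-first search. Summing over all quintuples is still polynomial in $\|\algA\|$, so by Theorem~\ref{thmJPath} this decides $B\Jabsorbs\algA$ in $\compP$. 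I expect no obstacle here; it is a direct translation of Theorem~\ref{thmJPath}.

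For item (2) I would get the upper bound from Lemma~\ref{lemBblockers}: a nondeterministic machine guesses a pair $(C,D)\subseteq A\times A$ and verifies the five reworded conditions, each of which --- in particular item~(\ref{itmIndices}), which is a table lookup for each basic operation and each coordinate --- is checkable in time polynomial in $\|\algA\|$; hence deciding $B$-blockers is in $\compNP$. For the lower bound I would simply quote Theorem~\ref{thmReduction}, which already supplies a logarithmic-space (hence polynomial-time) many-one reduction from 3-SAT to the problem of deciding $\{b\}$-blockers in an idempotent algebra in a congruence distributive variety. Since that is a special case of deciding $B$-blockers, the general problem is $\compNP$-hard, and the ``even if $\algA$ is congruence distributive and $B$ is a singleton'' clause is exactly what Theorem~\ref{thmReduction} gives.

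For item (3) I would combine Theorem~\ref{MainThm} with items (1) and (2): $B$ fails to absorb $\algA$ iff $B$ fails to J\'onsson absorb $\algA$ \emph{or} $\algA$ has a $B$-blocker; the first disjunct is decidable in $\compP$ and the second in $\compNP$, so ``$B$ does not absorb $\algA$'' is in $\compNP$, i.e.\ deciding $B\absorbs\algA$ is in co-$\compNP$. For co-$\compNP$-hardness I would run the reduction of Theorem~\ref{thmReduction} on a 3-SAT formula $\psi$ to obtain an idempotent CD algebra $\algA$ with a distinguished element $0$; congruence distributivity gives $\{0\}\Jabsorbs\algA$ for free, so Theorem~\ref{MainThm} yields $\{0\}\absorbs\algA$ iff $\algA$ has no $\{0\}$-blocker iff $\psi$ is unsatisfiable. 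Thus $\psi\mapsto(\algA,\{0\})$ reduces the complement of 3-SAT --- a co-$\compNP$-complete problem --- to deciding absorption, so absorption is co-$\compNP$-complete. The step that needs care is the \emph{direction} of this reduction: Theorem~\ref{thmReduction} converts satisfiability into the \emph{presence} of a blocker and hence into the \emph{failure} of absorption, so it is $\overline{\text{3-SAT}}$, not 3-SAT, that reduces to the absorption problem. Finally, for the fixed-parameter statement I would compose the procedures: decide $B\Jabsorbs\algA$ by the $\compP$ test of (1), search for a $B$-blocker by Algorithm~\ref{algBblockers}, and answer ``yes'' exactly when the first succeeds and the second fails; the recorded running time $O(\|\algA\|\,|A|^2\prod_{i=1}^{n}s_i)$ of Algorithm~\ref{algBblockers} is polynomial in $\|\algA\|$ once $\prod_{i=1}^{n}s_i$ is treated as a parameter, and (1) adds only a polynomial factor, so the whole procedure is fixed-parameter tractable with respect to the product of the arities of the basic operations. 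The only genuine content of the argument lies in the earlier Theorems~\ref{thmJPath}, \ref{MainThm}, \ref{thmReduction} and in the correctness and timing analysis of Algorithm~\ref{algBblockers}.
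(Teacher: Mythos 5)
Your proposal is correct and follows the same route the paper takes: item (1) is the algorithmic reading of Theorem~\ref{thmJPath}, item (2) combines Lemma~\ref{lemBblockers} for membership in $\compNP$ with Theorem~\ref{thmReduction} for hardness, and item (3) decomposes via Theorem~\ref{MainThm} and uses Algorithm~\ref{algBblockers} for the fixed-parameter claim. Your explicit note that Theorem~\ref{thmReduction} reduces $\overline{\text{3-SAT}}$ (not 3-SAT) to the absorption problem is a useful clarification of a point the paper leaves implicit.
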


\section{Conclusion}

We have shown that $B$ is an absorbing subuniverse of a finite idempotent algebra $\algA$ if and only if $B$ J\'onsson absorbs $\algA$ and $\algA$ has no $B$-blockers. Our result can be seen, for finite algebras, as  ``decomposed'' version of the fact~\cite{BIMMVW,MM08b} that a finite algebra $\algA$ has an NU term if and only if $\algA$ has J\'onsson terms and a cube term. Indeed, in view of~Proposition~\ref{abs-and-nu}, Corollary~\ref{cor-Jonsson} and \cite[Theorem~3.4]{markovic-maroti-mckenzie-blockers}, this fact can be equivalently phrased as follows:
Finite idempotent algebra $\algA$ has all singletons absorbing if and only if $\algA$ has all singletons J\'onsson absorbing and has no $\{a\}$-blockers for any $a \in A$.  (For non-idempotent finite algebras, we can formulate a similar statement by replacing $\algA$ with the full idempotent reduct of $\algA$.) While having all singletons absorbing (that is, having an NU term) is quite a strong property of algebras, having some proper absorbing subuniverse is quite common -- for instance, any pair of non-commuting congruences of a finite idempotent algebra $\algA$ with a Taylor term forces a proper absorbing subuniverse in a subalgebra of $\algA$ (by the Absorption Theorem~\cite{barto-kozik-cyclic-terms-and-csp}). For this reason, it seems that our theorem promises much wider applicability. 

Several other results about algebras with NU terms or J\'onsson terms can be decomposed in a similar way. For instance,
Proposition~\ref{propEssentialRel} can be regarded as a decomposed version of a part of the Baker-Pixley theorem~\cite{BP75}, and Theorem~\ref{thmJPath} generalizes \cite[Proposition~5.7]{matt-complexity-maltsev-conditions} in a similar way. A natural questions arise -- how to generalize, in a such a way, other results about algebras with NU terms or J\'onsson terms (eg. J\'onsson's Lemma) and what other Maltsev conditions have useful decomposed versions? 

As an application of the main theorem we have shown that absorption $B \absorbs \algA$ is decidable, where $\algA$ is given by tables of operations. Another way how to define an algebra is by means of relations -- to every relational structure $\relstr{A}$ one assigns so called \emph{algebra of polymorphism} whose operations are those which are compatible with every relation of $\relstr{A}$. A version of the main theorem for polymorphism algebras and decidability of the corresponding absorption problem is provided in~\cite{barto-bulin-absorption}. 

\section*{Acknowledgements}
Libor Barto and Alexandr Kazda were supported by the the Grant Agency of the Czech Republic, grant GA\v CR
13-01832S. Alexandr Kazda was supported by European Research Council under the
European Unions Seventh Framework Programme (FP7/2007-2013)/ERC grant agreement
no 616160.

\bibliographystyle{plain}
\bibliography{citations}

\begin{thebibliography}{10}

\bibitem{BP75}
Kirby~A. Baker and Alden~F. Pixley.
\newblock Polynomial interpolation and the {C}hinese remainder theorem for
  algebraic systems.
\newblock {\em Mathematische Zeitschrift}, 143:165--174, 1975.

\bibitem{libor-conservative}
Libor Barto.
\newblock The dichotomy for conservative constraint satisfaction problems
  revisited.
\newblock In {\em 26th Annual IEEE Symposium on Logic in Computer Science
  (LICS)}, pages 301--310, Washington, 2011. IEEE.

\bibitem{barto-bulin-absorption}
Libor Barto and Jakub Bul\'in.
\newblock Deciding absorption in relational structures.
\newblock in preparation.

\bibitem{cd-implies-bw}
Libor Barto and Marcin Kozik.
\newblock Congruence distributivity implies bounded width.
\newblock {\em SIAM Journal on Computing}, 39(4):1531--1542, 2009.

\bibitem{barto-kozik-cyclic-terms-and-csp}
Libor Barto and Marcin Kozik.
\newblock Absorbing subalgebras, cyclic terms, and the constraint satisfaction
  problem.
\newblock {\em Logical Methods in Computer Science}, 8(1), 2012.

\bibitem{barto-kozik-approximation}
Libor Barto and Marcin Kozik.
\newblock Robust satisfiability of constraint satisfaction problems.
\newblock In {\em Proceedings of the 44th symposium on Theory of Computing},
  STOC '12, pages 931--940, 2012.

\bibitem{barto-kozik-bw-2014}
Libor Barto and Marcin Kozik.
\newblock Constraint satisfaction problems solvable by local consistency
  methods.
\newblock {\em J. ACM}, 61(1):3:1--3:19, January 2014.

\bibitem{barto-kozik-stanovsky-absorption-solvability}
Libor Barto, Marcin Kozik, and David Stanovský.
\newblock Mal’tsev conditions, lack of absorption, and solvability.
\newblock {\em Algebra universalis}, 74(1-2):185--206, 2015.

\bibitem{uabook}
Clifford Bergman.
\newblock {\em Universal Algebra: Fundamentals and Selected Topics}.
\newblock Chapman \& Hall/CRC Press, Boca Raton and New York and Abingdon, 1
  edition, 2011.

\bibitem{BIMMVW}
Joel Berman, Paweł Idziak, Petar Markovic, Ralph McKenzie, Matthew Valeriote,
  and Ross Willard.
\newblock Varieties with few subalgebras of powers.
\newblock {\em Transactions of the American Mathematical Society},
  362:1445--1473, 2010.

\bibitem{burris}
Stanley Burris and H.~P. Sankappanavar.
\newblock {\em A Course in Universal Algebra}.
\newblock Springer, New York, graduate texts in mathematics edition, 2012.
\newblock The {M}illenium {E}dition (electronic book).

\bibitem{jonsson}
Bjarni J{\'o}nsson.
\newblock Algebras whose congruence lattices are distributive.
\newblock {\em Math. Scand.}, 21:110--121 (1968), 1967.

\bibitem{directed-jonsson}
Alexandr Kazda, Marcin Kozik, Ralph McKenzie, and Matt Moore.
\newblock Absorption and directed {J}\'{o}nsson terms.
\newblock In {\em Don Pigozzi on Abstract Algebraic Logic, Universal Algebra
  and Computer Science}, Outstanding Contributions to Logic.
\newblock (accepted).

\bibitem{zhuk-kazda}
Alexandr Kazda and Dmitriy Zhuk.
\newblock Arities and decision procedures for cube terms.
\newblock manuscript.

\bibitem{KS12}
Keith~A. Kearnes and \'Agnes Szendrei.
\newblock Clones of algebras with parallelogram terms.
\newblock {\em International Journal of Algebra and Computation},
  22(01):1250005, 2012.

\bibitem{markovic-maroti-mckenzie-blockers}
Petar Markovi\'c, Miklós Maróti, and Ralph McKenzie.
\newblock Finitely related clones and algebras with cube terms.
\newblock {\em Order}, 29(2):345--359, 2012.

\bibitem{MM08b}
Petar Markovi\'c and Ralph McKenzie.
\newblock Few subpowers, congruence distributivity and near-unanimity terms.
\newblock {\em Algebra Universalis}, 58:119--128(10), March 2008.

\bibitem{maroti-nu-is-decidable}
Miklós Maróti.
\newblock The existence of a near-unanimity term in a finite algebra is
  decidable.
\newblock {\em The Journal of Symbolic Logic}, 74:1001--1014, 9 2009.

\bibitem{matt-complexity-maltsev-conditions}
Matthew~A. Valeriote and Ralph Freese.
\newblock On the complexity of some {M}altsev conditions.
\newblock {\em International Journal of Algebra and Computation},
  19(2):451--464, 2009.

\bibitem{a-course-in-combinatorics}
J.~H. van Lint and R.~M. Wilson.
\newblock {\em A Course in Combinatorics}.
\newblock Cambridge University Press, 2nd edition, 2001.

\end{thebibliography}

\end{document}